\numberwithin{equation}{section}
\newtheorem{theorem}{Theorem}[section]
\newtheorem{lemma}[theorem]{Lemma}
\newtheorem{corollary}[theorem]{Corollary}
\newtheorem{proposition}[theorem]{Proposition}
\theoremstyle{definition}
\newtheorem{remark}[theorem]{Remark}
\theoremstyle{remark}
\begin{document}

\title[Atomic decomposition for Morrey-Lorentz spaces]{Atomic decomposition for Morrey-Lorentz spaces}
\author{Naoya Hatano}
\address[Naoya Hatano]{Department of Mathematics, Chuo University, 1-13-27, Kasuga, Bunkyo-ku, Tokyo 112-8551, Japan}
\email[Naoya Hatano]{n.hatano.chuo@gmail.com}
\maketitle

\begin{abstract}
In this paper, we consider the atomic decomposition for Morrey-Lorentz spaces and applications.
Morrey-Lorentz spaces, which have structures of Morrey spaces,  Lorentz spaces and their weak-type spaces, are introduced by M. A. Ragusa in 2012.
Our study gave some extension of the atomic decomposition to Morrey-Lorentz spaces.
As an application, the Olsen inequality can be obtained more sharpness.
\end{abstract}

{\bf Keywords}
Morrey-Lorentz spaces,
weak Morrey spaces,
Hardy-Morrey-Lorentz spaces,
atomic decomposition,
Olsen's inequality.

{\bf Mathematics Subject Classifications (2010)} 
Primary 42B35; Secondary 42B25

\section{Introduction}%\label{s1}

The purpose of this paper is to give the atomic decomposition for Morrey-Lorentz spaces.
Morrey-Lorentz spaces, which are an extension of Morrey and Lorentz spaces, were introduced by Ragusa \cite{Ragusa12} in 2012.
To introduce the Morrey-Lorentz spaces, we recall the definition of the Lorentz spaces.
In 1950, Lorentz introduced the Lorentz spaces in \cite{Lorentz50}.
For $\alpha>0$ and $t>0$, we define the distribution function $\lambda_f$ and the rearrangement function $f^\ast$ by
\begin{equation*}%\label{eq:dist-rearr}
\lambda_f(\alpha)
\equiv
|\{x\in{\mathbb R}^n:|f(x)|>\alpha\}|,
\quad
f^\ast(t)
\equiv
\inf\{\alpha>0:\lambda_f(\alpha)\le t\},
\end{equation*}
respectively.
Then for $0<p,q\le\infty$, the {\it Lorentz quasi-norm} $\|\cdot\|_{L^{p,q}}$ is defined by
\begin{equation*}%\label{eq:Lorentz norm}
\|f\|_{L^{p,q}}\equiv
\begin{cases}
\displaystyle\left(\int_0^\infty(t^{\frac1p}f^\ast(t))^q\,\frac{{\rm d}t}t\right)^{\frac1q},
& \text{$0<p,q<\infty$},\\
\displaystyle\sup_{t>0}t^{\frac1p}f^\ast(t),
& \text{$0<p\le\infty$, $q=\infty$},
\end{cases}
\end{equation*}
and the {\it Lorentz space} $L^{p,q}({\mathbb R}^n)$ is the set 
of all measurable functions $f$ 
with the finite quasi-norm $\|\cdot\|_{L^{p,q}}$.
We remark that the Lorentz spaces $L^{p,p}({\mathbb R}^n)$ and $L^{p,\infty}({\mathbb R}^n)$ coincide with the Lebesgue space $L^p({\mathbb R}^n)$ and the weak Lebesgue space ${\rm W}L^p({\mathbb R}^n)$, respectively.

In addition, we will recall the definition of Morrey-Lorentz spaces.
Let $0<q\le p<\infty$ and $0<r\le\infty$.
We define the {\it Morrey-Lorentz space} ${\mathcal M}^p_{q,r}({\mathbb R}^n)$ by
\begin{equation*}%\label{eq:Morrey Lorentz space}
{\mathcal M}^p_{q,r}({\mathbb R}^n)
\equiv
\left\{
f\in L^0({\mathbb R}^n)
:
\|f\|_{{\mathcal M}^p_{q,r}}
\equiv
\sup_{Q\in{\mathcal Q}({\mathbb R}^n)}|Q|^{\frac1p-\frac1q}\|f\chi_Q\|_{L^{q,r}}
<\infty
\right\},
\end{equation*}
where the class $L^0({\mathbb R}^n)$ is the set of all measurable functions defined on ${\mathbb R}^n$ and ${\mathcal Q}({\mathbb R}^n)$ denotes the family of all cubes with parallel to coordinate axis in ${\mathbb R}^n$.
We note that the Morrey-Lorentz spaces ${\mathcal M}^p_{p,r}({\mathbb R}^n)$ and ${\mathcal M}^p_{q,q}({\mathbb R}^n)$ consist with the Lorentz space $L^{p,q}({\mathbb R}^n)$ and the Morrey space ${\mathcal M}^p_q({\mathbb R}^n)$, respectively.
Especially, ${\mathcal M}^p_{q,\infty}({\mathbb R}^n)$ corresponds to the weak Morrey space ${\rm W}{\mathcal M}^p_q({\mathbb R}^n)$ which the weak Morrey quasi-norm $\|\cdot\|_{{\rm W}{\mathcal M}^p_q}$ is defined by
\begin{equation*}%\label{eq:weak Morrey norm}
\|f\|_{{\rm W}{\mathcal M}^p_q}
\equiv
\sup_{\lambda>0}\lambda\|\chi_{\{x\in{\mathbb R}^n:|f(x)|>\lambda\}}\|_{{\mathcal M}^p_q}.
\end{equation*}

The set of all dyadic cubes
is denoted by ${\mathcal D}({\mathbb R}^n)${\rm;}
\begin{equation*}%\label{eq:dyadic cube}
{\mathcal D}_m({\mathbb R}^n)
\equiv
\left\{
Q_{m k}\equiv\prod_{i=1}^n\left[\frac{k_i}{2^m},\frac{k_i+1}{2^m}\right)
:
\text{$k=(k_1,\ldots,k_n)\in{\mathbb Z}^n$}
\right\},
\quad
{\mathcal D}({\mathbb R}^n)
\equiv
\bigcup_{j\in{\mathbb Z}}{\mathcal D}_j({\mathbb R}^n)
\end{equation*}
for $m\in{\mathbb Z}$.
Then by the simple consideration, we have that
\begin{equation*}%\label{eq:190814-1}
\|f\|_{{\mathcal M}^p_{q,r}}
\sim
\sup_{Q\in{\mathcal D}({\mathbb R}^n)}|Q|^{\frac1p-\frac1q}
\|f\chi_Q\|_{L^{q,r}},
\end{equation*}
for any measurable function $f$ in ${\mathcal M}^p_{q,r}({\mathbb R}^n)$.
For $E({\mathbb R}^n)=L^{q,r}({\mathbb R}^n)$ or ${\rm W}L^q({\mathbb R}^n)$ with the quasi-norm $\|\cdot\|_E$,
\begin{equation*}%\label{locally space}
E_{\rm loc}({\mathbb R}^n)
\equiv
\{
f\in L^0({\mathbb R}^n)
:
\text{$\|f\chi_K\|_E<\infty$, for all compact sets $K$ in ${\mathbb R}^n$}
\}.
\end{equation*}

We aim here to prove the following result which is extension of the atomic decomposition to Morrey-Lorentz spaces.

\begin{theorem}\label{thm:200308-1}
Suppose that the parameters $p,q,r,s,t,v$ satisfy
\begin{equation*}%\label{eq:200308-1.1}
0<q \le p<\infty, \quad
0<r\le\infty, \quad
0<t \le s<\infty, \quad
0<v\le1,
\end{equation*}
\begin{equation*}
q<t, \quad p<s, \quad v<\min(q,r).
\end{equation*}
Assume that
$\{Q_j\}_{j=1}^{\infty} \subset {\mathcal Q}({\mathbb R}^n)$,
$\{a_j\}_{j=1}^{\infty} \subset {\rm W}{\mathcal M}^s_t({\mathbb R}^n)$,
and
$\{\lambda_j\}_{j=1}^{\infty} \subset[0,\infty)$
fulfill
\begin{equation*}%\label{eq:200308-1.2}
\|a_j\|_{{\rm W}{\mathcal M}^s_t}\le|Q_j|^{\frac1s}, \quad {\rm supp}(a_j) \subset Q_j, \quad
\left\|\left(\sum_{j=1}^{\infty}(\lambda_j\chi_{Q_j})^v\right)^{\frac1v}\right\|_{{\mathcal M}^p_{q,r}}
<\infty.
\end{equation*}
Then, $f=\sum_{j=1}^{\infty} \lambda_j a_j$ converges a.e. and satisfies
\begin{equation}\label{eq:200308-1.3}
\|f\|_{{\mathcal M}^p_{q,r}}
\lesssim_{p,q,r,s,t,v}
\left\|\left(\sum_{j=1}^{\infty}(\lambda_j\chi_{Q_j})^v\right)^{\frac1v}\right\|_{{\mathcal M}^p_{q,r}}.
\end{equation}
In particular, $f=\sum_{j=1}^{\infty} \lambda_j a_j$ converges in $L_{\rm loc}^{q,r}({\mathbb R}^n)$ if $r<\infty$ and in $L^{p,r}({\mathbb R}^n)$ if $p=q$ and $r<\infty$.
\end{theorem}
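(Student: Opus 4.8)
The plan is to reduce the Morrey--Lorentz estimate to a pointwise/localized estimate on dyadic cubes, exploiting the fact that $\|f\|_{\mathcal{M}^p_{q,r}}\sim\sup_{Q\in\mathcal{D}}|Q|^{1/p-1/q}\|f\chi_Q\|_{L^{q,r}}$. Fix a dyadic cube $Q$. The sum $\sum_j\lambda_j a_j$ splits into the atoms with $Q_j\subset 3Q$ (the ``near'' part) and those with $Q_j\not\subset 3Q$ but $Q_j\cap Q\neq\emptyset$, which forces $\ell(Q_j)\gtrsim\ell(Q)$ (the ``far'' part). For the far part I would use that on $Q$ each such $a_j$ is controlled, via $\|a_j\|_{\mathrm{W}\mathcal{M}^s_t}\le|Q_j|^{1/s}$ and the embedding of weak Morrey into a fixed Lorentz space on the smaller cube $Q$, by a constant comparable to $(|Q|/|Q_j|)^{1/t-1/s}$ times $\lambda_j^{-1}\cdot\lambda_j$; summing the geometrically decaying tails in $\ell(Q_j)/\ell(Q)$ and comparing with the value of $(\sum_j(\lambda_j\chi_{Q_j})^v)^{1/v}$ on $Q$ gives the right bound. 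This is the standard ``far'' estimate and should go through using only $v<\min(q,r)$ and $q<t$, $p<s$.

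The heart of the matter is the near part, $g\equiv\sum_{Q_j\subset 3Q}\lambda_j a_j$. Here I would argue at the level of the distribution function. For a parameter $\lambda>0$, decompose the index set $\{j:Q_j\subset 3Q\}$ according to the dyadic size of $\lambda_j$ relative to $\lambda$, or better, run the classical argument of estimating $\|g\chi_Q\|_{L^{q,r}}$ through the superlevel sets $\{|g|>\lambda\}$: bound $|\{x\in Q:|g(x)|>\lambda\}|$ by first discarding the atoms with $\lambda_j$ small (their total contribution is pointwise $\lesssim\lambda/2$ after summing, using the $\ell^v$-structure and $v\le 1$ together with the hypothesis $v<q$) and then using Tchebychev/weak-type control for the atoms with $\lambda_j$ large. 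Because each $a_j\in\mathrm{W}\mathcal{M}^s_t$ with $\|a_j\|_{\mathrm{W}\mathcal{M}^s_t}\le|Q_j|^{1/s}$, each individual term satisfies $|\{x:|\lambda_j a_j(x)|>\lambda\}\cap R|\lesssim (\lambda_j/\lambda)^t|R|^{1-t/s}|Q_j|^{t/s}$ for cubes $R$, and one sums these over $j$, organizing the $Q_j$ by generation inside $3Q$. The key gain is that $t>q$ and $s>p$ give room to sum the resulting geometric series in the scales, and the outcome is expressed in terms of the Morrey--Lorentz norm of $\big(\sum_j(\lambda_j\chi_{Q_j})^v\big)^{1/v}$ evaluated on $3Q$, which is $\lesssim\|(\sum_j(\lambda_j\chi_{Q_j})^v)^{1/v}\|_{\mathcal{M}^p_{q,r}}|Q|^{1/q-1/p}$.

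The main obstacle I anticipate is the bookkeeping in the near part: handling the Lorentz (rather than Lebesgue) norm forces one to track the full distribution function rather than a single $L^q$ integral, and the condition $v<\min(q,r)$ is precisely what is needed to pass from the $\ell^v$ aggregation of the coefficients to an $L^{q,r}$ bound — so the argument must be arranged so that the $v$-triangle inequality and a discretized Lorentz-norm estimate (e.g. via $\|h\|_{L^{q,r}}\sim\|\{2^k\lambda_{h}(2^k)^{1/q}\}_k\|_{\ell^r}$) interact cleanly. Once \eqref{eq:200308-1.3} is established, the a.e.\ convergence and the convergence in $L^{q,r}_{\mathrm{loc}}$ (resp.\ in $L^{p,r}$ when $p=q$) follow by applying the same inequality to the tails $\sum_{j\ge N}\lambda_j a_j$: the hypothesis forces $\big\|(\sum_{j\ge N}(\lambda_j\chi_{Q_j})^v)^{1/v}\big\|_{\mathcal{M}^p_{q,r}}\to 0$ by dominated convergence for series (valid since $r<\infty$), hence the partial sums are Cauchy in the relevant space, and a subsequence converges a.e.; identifying the limit with $f$ is routine.
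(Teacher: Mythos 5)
Your outer framework (reduce to dyadic cubes, split into atoms whose cubes are large compared with $Q$ and atoms whose cubes sit inside $Q$, use $q<t$ and $p<s$ to sum geometric series) matches the shape of the paper's argument, but the mechanism you propose for the ``near'' part has a genuine gap. You want to discard the atoms with small $\lambda_j$ because ``their total contribution is pointwise $\lesssim\lambda/2$ after summing.'' That step is only available when the atoms are $L^\infty$-normalized, $|a_j|\le\chi_{Q_j}$. Here the atoms are merely in ${\rm W}{\mathcal M}^s_t({\mathbb R}^n)$ with $\|a_j\|_{{\rm W}{\mathcal M}^s_t}\le|Q_j|^{1/s}$, so each $a_j$ may be unbounded and no pointwise bound $|\lambda_ja_j|\lesssim\lambda_j\chi_{Q_j}$ holds; consequently the small-coefficient part cannot be absorbed pointwise, and the large-coefficient part cannot be handled by naively adding up weak-type (Chebyshev) estimates, since ${\rm W}L^t$ quasi-norms of infinitely many terms do not sum. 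This is exactly the difficulty that forces the paper onto a different track: after the $\ell^v\hookrightarrow\ell^1$ reduction to the space ${\mathcal M}^{p/v}_{q/v,r/v}({\mathbb R}^n)$, it pairs $\sum_j(\lambda_ja_j)^v$ against a $(\tilde p',\tilde q',\tilde r')$-block $g$ from the predual (Lorentz-block) space, uses the Lorentz--H\"older inequality $\int a_j{}^vg\lesssim\|a_j\|_{L^{t,\infty}}^v\|g\chi_{Q_j}\|_{L^{\tilde t',1}}$ to convert the weak norm of the atom into an $L^{\tilde t',1}$ average of $g$, and then invokes the boundedness of the maximal operator $M^{(\tilde t',1)}$ on $L^{\tilde q',\tilde r'}({\mathbb R}^n)$ (Proposition \ref{prop:190620-1}, which is where $q<t$ enters). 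Without either this duality device or a preliminary re-decomposition of each weak-Morrey atom into $L^\infty$ atoms, your distribution-function argument for the near part does not close.

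A secondary point: your justification of convergence asserts that $\bigl\|\bigl(\sum_{j\ge N}(\lambda_j\chi_{Q_j})^v\bigr)^{1/v}\bigr\|_{{\mathcal M}^p_{q,r}}\to0$ ``by dominated convergence.'' Morrey-type quasi-norms with $q<p$ do not have absolutely continuous norm, so this global statement is false in general. What does work (and what the paper does) is to apply the estimate \eqref{eq:200308-1.3} to the tail multiplied by $\chi_R$ for a fixed cube $R$ and use dominated convergence in $L^{q,r}(R)$ with $r<\infty$; this yields convergence in $L^{q,r}_{\rm loc}({\mathbb R}^n)$, and the global claim is only made when $p=q$, where ${\mathcal M}^p_{p,r}({\mathbb R}^n)=L^{p,r}({\mathbb R}^n)$ does admit dominated convergence.
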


In this theorem, we can take the atoms $\{a_j\}_{j=1}^\infty$ from the large space, the weak Morrey space ${\rm W}{\mathcal M}^s_t({\mathbb R}^n)$.
The difference between Morrey spaces and weak Morrey spaces in \cite{GHNS18}.
In addition, we can choose the parameter $v$ freely.\\

It is possible to rewrite Theorem \ref{thm:200308-1} to Lorentz spaces and weak Morrey spaces, respectively, as follows:

\begin{corollary}%\label{cor:200308-1.1}
Suppose that the parameters $p,r,s,t,v$ satisfy
\begin{equation*}
0<p<t\le s<\infty, \quad
0<r\le\infty, \quad
0<v\le1, \quad
v<\min(p,r).
\end{equation*}
Assume that
$\{Q_j\}_{j=1}^{\infty} \subset {\mathcal Q}({\mathbb R}^n)$,
$\{a_j\}_{j=1}^{\infty} \subset {\rm W}{\mathcal M}^s_t({\mathbb R}^n)$,
and
$\{\lambda_j\}_{j=1}^{\infty} \subset[0,\infty)$
fulfill
\begin{equation*}%\label{eq:200308-1.4}
\|a_j\|_{{\rm W}{\mathcal M}^s_t}\le|Q_j|^{\frac1s}, \quad {\rm supp}(a_j) \subset Q_j, \quad
\left\|\left(\sum_{j=1}^{\infty} (\lambda_j \chi_{Q_j})^v\right)^{\frac1v}\right\|_{L^{p,r}}
<\infty.
\end{equation*}
If $v<\min(p,r)$, then
$f=\sum_{j=1}^{\infty} \lambda_j a_j$
converges a.e.
and satisfies
\begin{equation*}%\label{eq:200308-1.5}
\|f\|_{L^{p,r}}
\lesssim_{p,r,s,t,v}
\left\|\left(\sum_{j=1}^{\infty}(\lambda_j\chi_{Q_j})^v\right)^{\frac1v}\right\|_{L^{p,r}}.
\end{equation*}
In particular, $f=\sum_{j=1}^{\infty} \lambda_j a_j$ converges in $L^{p,r}({\mathbb R}^n)$ if $r<\infty$.
\end{corollary}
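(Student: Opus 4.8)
The plan is to deduce the Corollary directly from Theorem~\ref{thm:200308-1} by specializing to the diagonal case $q=p$. Recall from the discussion following the definition of the Morrey--Lorentz spaces that ${\mathcal M}^p_{p,r}({\mathbb R}^n)=L^{p,r}({\mathbb R}^n)$ with identical quasi-norms. Consequently both the standing hypothesis $\left\|\left(\sum_{j=1}^\infty(\lambda_j\chi_{Q_j})^v\right)^{1/v}\right\|_{L^{p,r}}<\infty$ and the desired estimate $\|f\|_{L^{p,r}}\lesssim\left\|\left(\sum_{j=1}^\infty(\lambda_j\chi_{Q_j})^v\right)^{1/v}\right\|_{L^{p,r}}$ are, word for word, the corresponding statements phrased in the ${\mathcal M}^p_{p,r}$-scale.

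First I would verify that the parameter constraints of the Corollary imply those of Theorem~\ref{thm:200308-1} with the choice $q:=p$. Indeed, $0<q\le p<\infty$ is then automatic; $0<r\le\infty$, $0<t\le s<\infty$ and $0<v\le1$ are assumed; the condition $q<t$ becomes $p<t$, which is assumed; $p<s$ follows from $p<t\le s$; and $v<\min(q,r)=\min(p,r)$ is precisely the last hypothesis of the Corollary. The structural conditions on the data are unchanged: $\{Q_j\}_{j=1}^\infty\subset{\mathcal Q}({\mathbb R}^n)$, $\{a_j\}_{j=1}^\infty\subset{\rm W}{\mathcal M}^s_t({\mathbb R}^n)$, $\{\lambda_j\}_{j=1}^\infty\subset[0,\infty)$, together with $\|a_j\|_{{\rm W}{\mathcal M}^s_t}\le|Q_j|^{1/s}$ and ${\rm supp}(a_j)\subset Q_j$, so they transfer verbatim to the hypotheses of Theorem~\ref{thm:200308-1}.

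Then I would simply invoke Theorem~\ref{thm:200308-1}, which gives that $f=\sum_{j=1}^\infty\lambda_j a_j$ converges a.e.\ and satisfies $\|f\|_{{\mathcal M}^p_{p,r}}\lesssim_{p,r,s,t,v}\left\|\left(\sum_{j=1}^\infty(\lambda_j\chi_{Q_j})^v\right)^{1/v}\right\|_{{\mathcal M}^p_{p,r}}$; rewriting both sides via ${\mathcal M}^p_{p,r}=L^{p,r}$ yields the claimed inequality. For the convergence assertion, the ``in particular'' clause of Theorem~\ref{thm:200308-1} specialized to $p=q$ and $r<\infty$ gives convergence of the series in $L^{p,r}({\mathbb R}^n)$, which is exactly the conclusion of the Corollary.

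Since the argument is a pure specialization of the main theorem, there is no real obstacle here; the only points requiring (routine) care are the bookkeeping of the parameter inequalities under $q=p$ and the identification ${\mathcal M}^p_{p,r}({\mathbb R}^n)=L^{p,r}({\mathbb R}^n)$, both already recorded in the introduction.
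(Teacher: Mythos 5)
Your proposal is correct and is exactly how the paper obtains this corollary: it is stated there as a direct rewriting of Theorem \ref{thm:200308-1} in the case $q=p$, using the identification ${\mathcal M}^p_{p,r}({\mathbb R}^n)=L^{p,r}({\mathbb R}^n)$ recorded in the introduction. Your parameter bookkeeping ($q:=p$ turns $q<t$ into $p<t$, gives $p<s$ from $p<t\le s$, and makes $v<\min(q,r)$ the stated hypothesis) and the appeal to the ``in particular'' clause for $p=q$, $r<\infty$ match the intended argument.
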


\begin{corollary}%\label{cor:200308-1.2}
Suppose that the parameters $p,q,s,t,v$ satisfy
\begin{equation*}%\label{eq:200308-1.6}
0<q\le p<\infty, \quad
0<t\le s<\infty, \quad
0<v\le1, \quad
p<s, \quad v<q<t.
\end{equation*}
Assume that
$\{Q_j\}_{j=1}^{\infty} \subset {\mathcal Q}({\mathbb R}^n)$,
$\{a_j\}_{j=1}^{\infty} \subset {\rm W}{\mathcal M}^s_t({\mathbb R}^n)$
and
$\{\lambda_j\}_{j=1}^{\infty} \subset[0,\infty)$
fulfill
\begin{equation*}%\label{eq:200308-1.7}
\|a_j\|_{{\rm W}{\mathcal M}^s_t}\le|Q_j|^{\frac1s}, \quad {\rm supp}(a_j) \subset Q_j, \quad
\left\|
\left(\sum_{j=1}^{\infty} (\lambda_j \chi_{Q_j})^v\right)^{\frac1v}
\right\|_{{\rm W}{\mathcal M}^p_q}
<\infty.
\end{equation*}
Then
$f=\sum_{j=1}^{\infty} \lambda_j a_j$
converges a.e.
and satisfies
\begin{equation*}%\label{eq:200308-1.8}
\|f\|_{{\rm W}{\mathcal M}^p_q}
\lesssim_{p,q,s,t,v}
\left\|
\left(\sum_{j=1}^{\infty}(\lambda_j\chi_{Q_j})^v\right)^{\frac1v}
\right\|_{{\rm W}{\mathcal M}^p_q}.
\end{equation*}
\end{corollary}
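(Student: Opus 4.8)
The plan is to prove Theorem~\ref{thm:200308-1} by reducing the estimate for the function $f=\sum_j\lambda_j a_j$ to an estimate for its "controlling function" $A\equiv\left(\sum_j(\lambda_j\chi_{Q_j})^v\right)^{1/v}$, exploiting the extra room afforded by the strict inequalities $q<t$, $p<s$, $v<\min(q,r)$. The first step is to fix a dyadic cube $Q$ and split the sum $\sum_j\lambda_j a_j$ into the part with $Q_j\subset 3Q$ and the part with $Q_j\not\subset 3Q$ (equivalently $\ell(Q_j)>\ell(Q)$, up to a harmless intermediate range), handled by a standard "near/far" decomposition; the far part is a finite sum controlled crudely because each such $Q_j$ already carries a large portion of $A$ on $Q$. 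For the near part, the key is the pointwise/$L^{q,r}$-bound on $\chi_Q\sum_{Q_j\subset 3Q}\lambda_j a_j$: using $\|a_j\|_{{\rm W}{\mathcal M}^s_t}\le|Q_j|^{1/s}$, $\operatorname{supp}a_j\subset Q_j$, and the elementary embedding ${\rm W}{\mathcal M}^s_t\hookrightarrow {\mathcal M}^{\tilde s}_{\tilde t}$-type control after losing a bit in the exponents (this is exactly where $t>q$ and $s>p$ are spent), one bounds the relevant Lorentz quasi-norm by a sum $\sum_j\lambda_j\|a_j\chi_Q\|_{L^{q,r}}$ and then by $\sum_j\lambda_j|Q_j|^{1/p-1/q}|Q\cap Q_j|^{\theta}$ for a suitable positive $\theta$.

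The second and central step is the passage from this linear sum to the $v$-superposition. Because $0<v\le 1$ and $v<\min(q,r)$, one has the $\ell^v$-triangle inequality and, crucially, the Lorentz/Morrey quasi-norm enjoys a $v$-subadditivity after raising to the power $v$ together with the fact that $v$ lies below $q$ and $r$, so one can estimate $\left\|\sum_j\lambda_j a_j\chi_Q\right\|_{L^{q,r}}$ by $\left\|\left(\sum_j(\lambda_j\chi_{Q_j})^v\right)^{1/v}\chi_{CQ}\right\|_{L^{q,r}}$ up to the dimensional/parameter constant. Concretely I would dominate $|f|\chi_Q\lesssim \left(\sum_j(\lambda_j|a_j|)^v\chi_{Q_j}\right)^{1/v}\chi_Q$ pointwise (valid since $v\le1$), then on each fixed $Q_j$ replace $|a_j|$ by an $L^{q,r}$-ball-normalized weak-Morrey atom, and use the substitution lemma that turns a weak-Morrey quasi-norm bound into a genuine $L^{q,r}(Q)$ bound at the cost of the gap between $(t,s)$ and $(q,p)$; summing in $j$ with the $\ell^v$-inequality and taking the supremum over $Q$ yields \eqref{eq:200308-1.3}. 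The a.e.\ convergence of $\sum_j\lambda_j a_j$ follows by showing the partial sums are Cauchy in $L^{q,r}_{\rm loc}$ when $r<\infty$ (localize to any cube $Q$, apply the same estimate to tails $\sum_{j\ge N}\lambda_j a_j$, and note $\left\|\left(\sum_{j\ge N}(\lambda_j\chi_{Q_j})^v\right)^{1/v}\chi_Q\right\|_{L^{q,r}}\to0$ by dominated convergence for the quasi-norm), and for general $r$ by a maximal-function/almost-orthogonality argument on dyadic annuli; the $L^{p,r}$ convergence when $p=q$ is immediate since then ${\mathcal M}^p_{q,r}=L^{p,r}$.

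I expect the main obstacle to be the second step: making rigorous the inequality that controls a quasi-norm of a sum of atoms by the quasi-norm of the $v$-superposition, because the Lorentz "norm" $L^{q,r}$ is only a quasi-norm and the rearrangement operation interacts badly with sums. The technical heart will be a decomposition of the family $\{Q_j\}$ into subfamilies of comparable side length and, within each, into families with bounded overlap, so that on each subfamily one can pass from $\sum$ to $\left(\sum(\cdot)^v\right)^{1/v}$ using the embedding $\ell^1\hookrightarrow\ell^v$ in reverse only where overlap is finite, and control the between-scale interaction by the strict gap $p<s$ (so the geometric series in scales converges). Everything else --- the weak-Morrey-to-$L^{q,r}$ substitution, the near/far split, and the convergence assertions --- is comparatively routine once this combinatorial-analytic core is in place.
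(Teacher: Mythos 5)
Your reduction of the corollary to Theorem \ref{thm:200308-1} with $r=\infty$ (so that ${\mathcal M}^p_{q,\infty}={\rm W}{\mathcal M}^p_q$ and $v<\min(q,r)$ becomes $v<q$) is exactly what the paper does, so the real question is whether your sketch of Theorem \ref{thm:200308-1} holds up. It does not: the step you yourself flag as the ``technical heart'' --- passing from the linear sum $\sum_j\lambda_j a_j$ (or from $\sum_j(\lambda_j|a_j|)^v$ after the pointwise $\ell^v$-domination) to the superposition $\bigl(\sum_j(\lambda_j\chi_{Q_j})^v\bigr)^{1/v}$ in the $L^{q,r}(Q)$ quasi-norm --- is a genuine gap, and the mechanism you propose for it (split $\{Q_j\}$ into subfamilies of comparable side length with bounded overlap and run $\ell^1\hookrightarrow\ell^v$ in reverse there) cannot work. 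The cubes $Q_j$ may be repeated arbitrarily often and may form arbitrarily deep nested towers through a fixed point, so no decomposition into finitely overlapping subfamilies exists; moreover the atoms are only normalized in ${\rm W}{\mathcal M}^s_t$, not in $L^\infty$, so $|a_j|$ cannot be replaced by $\chi_{Q_j}$ pointwise, and term-by-term application of the Lorentz quasi-triangle inequality destroys precisely the superposition structure you need to recover. The paper's actual mechanism is duality: after raising to the power $v$, one tests $\sum_j(\lambda_ja_j)^v$ against a $(\tilde p',\tilde q',\tilde r')$-block $g$ from the predual ${\mathcal H}^{\tilde p'}_{\tilde q',\tilde r'}({\mathbb R}^n)$ (here $\tilde p=p/v$, etc., and $v<\min(q,r)$ guarantees $\tilde q,\tilde r>1$ so the predual description applies); for the cubes $Q_j$ contained in the block's supporting cube one writes $\int_{Q_j}a_j{}^vg\lesssim\lambda_j^v|Q_j|\inf_{Q_j}M^{(\tilde t',1)}g$ and invokes the $L^{\tilde q',\tilde r'}$-boundedness of $M^{(\tilde t',1)}$ (this is exactly where $q<t$ is spent), while for the cubes properly containing the block's support one gets a geometric series in the scale, convergent because $p<s$. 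Your sketch gestures at where $q<t$ and $p<s$ enter but supplies no substitute for this duality--maximal-operator argument, so the central estimate is unproved.

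Two smaller points. First, your claim that the ``far'' part (cubes $Q_j$ not contained in $3Q$) is a finite sum is false; there may be infinitely many such $j$, and controlling them requires the convergent geometric series coming from $p<s$ after grouping identical cubes, as above. Second, for the present corollary the convergence issue is the $r=\infty$ case, where the paper's $L^{q,r}_{\rm loc}$-Cauchy argument is unavailable; a.e.\ convergence instead follows because the quasi-norm estimate applied to $\sum_j\lambda_j|a_j|$ shows this series is finite almost everywhere. Your proposed ``maximal-function/almost-orthogonality argument on dyadic annuli'' is not needed and, as stated, is not an argument.
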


The next assertion concerns the decomposition of functions in ${\mathcal M}^p_{q,r}({\mathbb R}^n)$.
Hereafter, we write ${\mathbb N}_0\equiv{\mathbb N} \cup \{0\}$.
Denote by ${\mathcal P}_K({\mathbb R}^n)$ the set of all polynomial functions with degree less than or equal to $K$.
The set ${\mathcal P}_K({\mathbb R}^n)^\perp$ denotes the set of all $f\in L^0({\mathbb R}^n)$ for which $\langle \cdot \rangle^Kf\in L^1({\mathbb R}^n)$ and $\displaystyle\int_{{\mathbb R}^n}x^\alpha f(x){\rm d}x=0$ for any $\alpha \in {\mathbb N}_0^n$ with $|\alpha| \le K$, where $\langle \cdot \rangle=(1+|\cdot|^2)^{\frac{1}{2}}$.

\begin{theorem}\label{thm:200308-2}
Let $1<q\le p<\infty$, $0<r\le\infty$, $K\in{\mathbb N}_0$, and $f\in{\mathcal M}^p_{q,r}({\mathbb R}^n)$.
Then, there exists a triplet
$\{\lambda_j\}_{j=1}^\infty\subset[0,\infty)$,
$\{Q_j\}_{j=1}^\infty\subset{\mathcal Q}({\mathbb R}^n)$,
and
$
\{a_j\}_{j=1}^\infty\subset L^\infty({\mathbb R}^n)\cap{\mathcal P}_K^\perp({\mathbb R}^n)$
such that
$f=\sum_{j=1}^\infty\lambda_ja_j$ in ${\mathcal S}'({\mathbb R}^n)$
and that, for all $v>0$,
\begin{equation}\label{eq:200308-2.1}
|a_j|\le\chi_{Q_j}, \quad
\left\|
\left(\sum_{j=1}^\infty(\lambda_j\chi_{Q_j})^v\right)^{\frac1v}
\right\|_{{\mathcal M}^p_{q,r}}
\lesssim_v
\|f\|_{{\mathcal M}^p_{q,r}}.
\end{equation}
\end{theorem}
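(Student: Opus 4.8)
The plan is to run the standard Calder\'on--Zygmund--Whitney construction underlying the atomic decomposition of Hardy spaces, applied to the grand maximal function of $f$, and then to read the sequence estimate in \eqref{eq:200308-2.1} off from the boundedness of the Hardy--Littlewood maximal operator $M$ on ${\mathcal M}^p_{q,r}({\mathbb R}^n)$, which is available precisely because $q>1$. As preliminaries I would record: (i) every $f\in{\mathcal M}^p_{q,r}({\mathbb R}^n)$ is a tempered distribution, since H\"older's inequality for Lorentz spaces gives $\|f\chi_Q\|_{L^1}\lesssim|Q|^{1-\frac1p}\|f\|_{{\mathcal M}^p_{q,r}}$ for every cube $Q$, whence $\int_{{\mathbb R}^n}|f(x)|(1+|x|)^{-M}\,{\rm d}x<\infty$ for $M$ large; (ii) the grand maximal function ${\mathcal M}_N f$ of order $N$, with $N>K$ a fixed integer, is lower semicontinuous, finite a.e., and obeys $|f|\lesssim{\mathcal M}_N f\lesssim Mf$ a.e.; (iii) $\|Mf\|_{{\mathcal M}^p_{q,r}}\lesssim_{p,q,r}\|f\|_{{\mathcal M}^p_{q,r}}$ for $1<q\le p<\infty$ (a known fact for $q>1$, deducible from the Chiarenza--Frasca estimate on Morrey spaces by interpolation).

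For the construction, put $\Omega_k\equiv\{x\in{\mathbb R}^n:{\mathcal M}_N f(x)>2^k\}$ for $k\in{\mathbb Z}$; these are open, decreasing in $k$, and of finite measure on compacta. Fix a Whitney decomposition $\Omega_k=\bigcup_i Q_{k,i}$ into cubes with disjoint interiors and $\ell(Q_{k,i})\sim{\rm dist}(Q_{k,i},\Omega_k^c)$, together with a subordinate smooth partition of unity $\{\zeta_{k,i}\}_i$ with $\sum_i\zeta_{k,i}=\chi_{\Omega_k}$, ${\rm supp}(\zeta_{k,i})\subset\tau Q_{k,i}$, and $\|\partial^\alpha\zeta_{k,i}\|_\infty\lesssim\ell(Q_{k,i})^{-|\alpha|}$, where $\tau=\tau(n)>1$ is chosen small enough that $\tau Q_{k,i}\subset\Omega_k$; hence $\bigcup_i\tau Q_{k,i}=\Omega_k$ and $\sum_i\chi_{\tau Q_{k,i}}\lesssim_n\chi_{\Omega_k}$. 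Let $P_{k,i}\in{\mathcal P}_K({\mathbb R}^n)$ be the polynomial determined by $\int_{{\mathbb R}^n}\bigl(f(x)-P_{k,i}(x)\bigr)\zeta_{k,i}(x)\,x^\alpha\,{\rm d}x=0$ for $|\alpha|\le K$, set $b_{k,i}\equiv(f-P_{k,i})\zeta_{k,i}$ and $g_k\equiv f-\sum_i b_{k,i}$, and verify the classical facts $\|g_k\|_\infty\lesssim 2^k$, $g_k\to f$ in ${\mathcal S}'({\mathbb R}^n)$ as $k\to\infty$, and $g_k\to0$ in ${\mathcal S}'({\mathbb R}^n)$ as $k\to-\infty$. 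Telescoping $f=\sum_{k\in{\mathbb Z}}(g_{k+1}-g_k)$ and inserting the polynomial correction terms that restore the vanishing moments, one rewrites $g_{k+1}-g_k=\sum_i A_{k,i}$ with ${\rm supp}(A_{k,i})\subset\tau Q_{k,i}$, $\int_{{\mathbb R}^n}x^\alpha A_{k,i}(x)\,{\rm d}x=0$ for $|\alpha|\le K$, and $\|A_{k,i}\|_\infty\lesssim2^k$ (the last estimate coming from the cancellation of the possibly large values of $f$ between $g_k$ and $g_{k+1}$). Relabelling the double-indexed family by a single index $j$ and setting $\lambda_j\equiv C\,2^k$, $a_j\equiv A_{k,i}/(C\,2^k)$, $Q_j\equiv\tau Q_{k,i}$ for a suitable fixed constant $C$, one gets $|a_j|\le\chi_{Q_j}$, $a_j\in L^\infty({\mathbb R}^n)\cap{\mathcal P}_K^\perp({\mathbb R}^n)$, and $f=\sum_j\lambda_j a_j$ in ${\mathcal S}'({\mathbb R}^n)$.

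Finally, the sequence estimate is soft. Using the bounded overlap of $\{\tau Q_{k,i}\}_i$ for each fixed $k$ and then summing a geometric series in $k$, one has, for a.e. $x\in{\mathbb R}^n$,
\[
\left(\sum_{j=1}^\infty\bigl(\lambda_j\chi_{Q_j}(x)\bigr)^v\right)^{\!\frac1v}
\lesssim_{n,v}\left(\sum_{k\in{\mathbb Z}}2^{kv}\chi_{\Omega_k}(x)\right)^{\!\frac1v}
=\left(\sum_{k\,:\,2^k<{\mathcal M}_N f(x)}2^{kv}\right)^{\!\frac1v}
\lesssim_v{\mathcal M}_N f(x)\lesssim Mf(x),
\]
so that $\bigl(\sum_j(\lambda_j\chi_{Q_j})^v\bigr)^{1/v}\lesssim_{n,v}Mf$ pointwise; taking the ${\mathcal M}^p_{q,r}$-quasi-norm and applying (iii) yields \eqref{eq:200308-2.1}. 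I expect the main obstacle to be the middle paragraph: one must arrange simultaneously that the atom supports pack into the level sets $\Omega_k$ with bounded overlap, that the $A_{k,i}$ have vanishing moments up to order $K$, and that $\|A_{k,i}\|_\infty\lesssim2^k$, and one must justify the ${\mathcal S}'$-convergence of the telescoping series; after that, the role of the hypothesis $q>1$ is confined to fact (iii).
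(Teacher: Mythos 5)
Your proposal is correct and follows essentially the same route as the paper: the paper obtains Theorem \ref{thm:200308-2} as the special case of Theorem \ref{thm:200308-4} in which $f$ is already a locally integrable function, and the engine there is Lemma \ref{lem:210407-1} (quoted from Sawano's book), which is exactly the Calder\'on--Zygmund construction on the level sets of the grand maximal function that you carry out in your middle paragraph, yielding $\bigl(\sum_j(\|a_j\|_{L^\infty}\chi_{Q_j})^v\bigr)^{1/v}\lesssim{\mathcal M}f$. The sequence estimate is then closed exactly as you close it, via ${\mathcal M}f\lesssim Mf$ and the boundedness of $M$ on ${\mathcal M}^p_{q,r}({\mathbb R}^n)$ for $q>1$ (Propositions \ref{prop:200308-1}, \ref{prop:200308-2} and \ref{prop:190622-2}); the only structural difference is that the paper's Theorem \ref{thm:200308-4} additionally runs a weak-* limit over $e^{t\Delta}f$, which is superfluous here since ${\mathcal M}^p_{q,r}({\mathbb R}^n)\hookrightarrow L^1_{\rm loc}({\mathbb R}^n)$ when $q>1$.
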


We can also Theorem \ref{thm:200308-2} to Lorentz spaces and weak Morrey spaces, respectively, as follows:

\begin{corollary}%\label{cor:200308-2.1}
Let $1<p<\infty$, $0<r\le\infty$, $K\in{\mathbb N}_0$, let $f\in L^{p,r}({\mathbb R}^n)$.
Then there exists a triplet
$\{\lambda_j\}_{j=1}^\infty\subset[0,\infty)$,
$\{Q_j\}_{j=1}^\infty\subset{\mathcal Q}({\mathbb R}^n)$
and
$
\{a_j\}_{j=1}^\infty\subset L^\infty({\mathbb R}^n)\cap{\mathcal P}_K^\perp({\mathbb R}^n)$
such that
$f=\sum_{j=1}^\infty\lambda_ja_j$ in ${\mathcal S}'({\mathbb R}^n)$
and that, for all $v>0$,
\begin{equation*}%\label{eq:200308-2.2}
|a_j|\le\chi_{Q_j}, \quad
\left\|
\left(\sum_{j=1}^\infty(\lambda_j\chi_{Q_j})^v\right)^{\frac1v}
\right\|_{L^{p,r}}
\lesssim_v
\|f\|_{L^{p,r}}.
\end{equation*}
\end{corollary}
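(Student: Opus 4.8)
The plan is to obtain this statement as an immediate specialization of Theorem~\ref{thm:200308-2} to the diagonal case $q=p$. As recalled in the introduction, ${\mathcal M}^p_{p,r}({\mathbb R}^n)=L^{p,r}({\mathbb R}^n)$ with coincidence of quasi-norms: in the definition of $\|\cdot\|_{{\mathcal M}^p_{q,r}}$ the weight $|Q|^{\frac1p-\frac1q}$ equals $1$ when $q=p$, one has $\|f\chi_Q\|_{L^{p,r}}\le\|f\|_{L^{p,r}}$ for every cube $Q$ by monotonicity of the Lorentz quasi-norm, and $\|f\chi_Q\|_{L^{p,r}}\to\|f\|_{L^{p,r}}$ as $Q\uparrow{\mathbb R}^n$ by monotone convergence in the integral (or the supremum, when $r=\infty$) defining the Lorentz quasi-norm; hence $\sup_{Q\in{\mathcal Q}({\mathbb R}^n)}|Q|^{\frac1p-\frac1q}\|f\chi_Q\|_{L^{p,r}}=\|f\|_{L^{p,r}}$.

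First I would check that the hypotheses match: the assumptions $1<p<\infty$, $0<r\le\infty$, $K\in{\mathbb N}_0$, $f\in L^{p,r}({\mathbb R}^n)$ are precisely the hypotheses of Theorem~\ref{thm:200308-2} with $q$ replaced by $p$, since then $1<q\le p<\infty$ reduces to $1<p<\infty$ and $f\in{\mathcal M}^p_{p,r}({\mathbb R}^n)=L^{p,r}({\mathbb R}^n)$. Applying Theorem~\ref{thm:200308-2} then produces a triplet $\{\lambda_j\}_{j=1}^\infty\subset[0,\infty)$, $\{Q_j\}_{j=1}^\infty\subset{\mathcal Q}({\mathbb R}^n)$, $\{a_j\}_{j=1}^\infty\subset L^\infty({\mathbb R}^n)\cap{\mathcal P}_K^\perp({\mathbb R}^n)$ with $f=\sum_{j=1}^\infty\lambda_ja_j$ in ${\mathcal S}'({\mathbb R}^n)$, $|a_j|\le\chi_{Q_j}$, and, for all $v>0$,
\[
\left\|\left(\sum_{j=1}^\infty(\lambda_j\chi_{Q_j})^v\right)^{\frac1v}\right\|_{{\mathcal M}^p_{p,r}}
\lesssim_v\|f\|_{{\mathcal M}^p_{p,r}}.
\]
Rewriting both Morrey-Lorentz norms as Lorentz norms via the identity above gives exactly the asserted estimate, while all the structural properties of $\{\lambda_j\}$, $\{Q_j\}$, $\{a_j\}$ are preserved verbatim.

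Since the argument is a one-line reduction, there is no genuine obstacle here; the entire substance of the corollary is contained in Theorem~\ref{thm:200308-2}, and the only point deserving explicit mention is the coincidence ${\mathcal M}^p_{p,r}=L^{p,r}$ with equal quasi-norms, which—as indicated—is elementary. If one instead wished to argue directly, the main difficulty would be exactly the one met in the proof of Theorem~\ref{thm:200308-2}, namely constructing the Calder\'on--Zygmund-type decomposition of $f$ adapted to the Lorentz quantity and controlling the resulting pieces, so nothing would be gained by bypassing the reduction.
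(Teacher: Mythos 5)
Your proposal is correct and matches the paper's intent exactly: the paper states this corollary as an immediate specialization of Theorem \ref{thm:200308-2} to the case $q=p$, using the identification ${\mathcal M}^p_{p,r}({\mathbb R}^n)=L^{p,r}({\mathbb R}^n)$ noted in the introduction, and gives no separate proof. Your additional verification that the quasi-norms coincide is a harmless elaboration of the same one-line reduction.
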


\begin{corollary}%\label{cor:200308-2.2}
Let $1<q\le p<\infty$, $K\in{\mathbb N}_0$, let $f\in {\rm W}{\mathcal M}^p_q({\mathbb R}^n)$.
Then there exists a triplet
$\{\lambda_j\}_{j=1}^\infty\subset[0,\infty)$,
$\{Q_j\}_{j=1}^\infty\subset{\mathcal Q}({\mathbb R}^n)$
and
$
\{a_j\}_{j=1}^\infty\subset L^\infty({\mathbb R}^n)\cap{\mathcal P}_K^\perp({\mathbb R}^n)$
such that
$f=\sum_{j=1}^\infty\lambda_ja_j$ in ${\mathcal S}'({\mathbb R}^n)$
and that, for all $v>0$,
\begin{equation*}%\label{eq:200308-2.3}
|a_j|\le\chi_{Q_j}, \quad
\left\|
\left(\sum_{j=1}^\infty(\lambda_j\chi_{Q_j})^v\right)^{\frac1v}
\right\|_{{\rm W}{\mathcal M}^p_q}
\lesssim_v
\|f\|_{{\rm W}{\mathcal M}^p_q}.
\end{equation*}
\end{corollary}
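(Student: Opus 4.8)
The plan is to deduce this corollary from Theorem~\ref{thm:200308-2} by specializing to $r=\infty$. The single point to settle first is that the weak Morrey space ${\rm W}{\mathcal M}^p_q({\mathbb R}^n)$ and the Morrey--Lorentz space ${\mathcal M}^p_{q,\infty}({\mathbb R}^n)$ coincide, not merely up to equivalence of quasi-norms but with $\|f\|_{{\rm W}{\mathcal M}^p_q}=\|f\|_{{\mathcal M}^p_{q,\infty}}$; this is the sharp form of the remark made just after the definition of ${\mathcal M}^p_{q,r}({\mathbb R}^n)$, and having a genuine equality here is exactly what allows the conclusion of Theorem~\ref{thm:200308-2} to be transported without losing any constants.

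To verify the equality I would invoke the layer-cake description $\|g\|_{L^{q,\infty}}=\sup_{\lambda>0}\lambda\,|\{x:|g(x)|>\lambda\}|^{1/q}$. For a cube $Q$ and $\lambda>0$ one has $\{|f\chi_Q|>\lambda\}=\{|f|>\lambda\}\cap Q$, whence $\|f\chi_Q\|_{L^{q,\infty}}=\sup_{\lambda>0}\lambda\,|\{|f|>\lambda\}\cap Q|^{1/q}$; on the other hand $\|\chi_{\{|f|>\lambda\}}\|_{{\mathcal M}^p_q}=\sup_{Q\in{\mathcal Q}({\mathbb R}^n)}|Q|^{\frac1p-\frac1q}|\{|f|>\lambda\}\cap Q|^{1/q}$. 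Substituting these expressions into the two definitions shows that both $\|f\|_{{\mathcal M}^p_{q,\infty}}$ and $\|f\|_{{\rm W}{\mathcal M}^p_q}$ equal the iterated supremum over $\lambda>0$ and $Q\in{\mathcal Q}({\mathbb R}^n)$ of $\lambda\,|Q|^{\frac1p-\frac1q}|\{|f|>\lambda\}\cap Q|^{1/q}$, and the two orders of taking these suprema give the same value; in particular the two function spaces are equal as sets.

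With this in hand the corollary is immediate. Given $1<q\le p<\infty$, $K\in{\mathbb N}_0$ and $f\in{\rm W}{\mathcal M}^p_q({\mathbb R}^n)={\mathcal M}^p_{q,\infty}({\mathbb R}^n)$, apply Theorem~\ref{thm:200308-2} with $r=\infty$ to obtain $\{\lambda_j\}_{j=1}^\infty\subset[0,\infty)$, $\{Q_j\}_{j=1}^\infty\subset{\mathcal Q}({\mathbb R}^n)$ and $\{a_j\}_{j=1}^\infty\subset L^\infty({\mathbb R}^n)\cap{\mathcal P}_K^\perp({\mathbb R}^n)$ with $|a_j|\le\chi_{Q_j}$, $f=\sum_{j=1}^\infty\lambda_ja_j$ in ${\mathcal S}'({\mathbb R}^n)$, and the estimate \eqref{eq:200308-2.1} for every $v>0$; rewriting the two ${\mathcal M}^p_{q,\infty}$-quasi-norms appearing in \eqref{eq:200308-2.1} as ${\rm W}{\mathcal M}^p_q$-quasi-norms via the identity of the previous paragraph yields precisely the asserted conclusion. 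The only thing that requires a little care — and hence the only (very mild) obstacle — is the bookkeeping of the implicit constant: since the passage from ${\mathcal M}^p_{q,\infty}$ to ${\rm W}{\mathcal M}^p_q$ is an equality of quasi-norms, the constant and its dependence on $v$ alone are inherited verbatim from Theorem~\ref{thm:200308-2}, and nothing further is needed.
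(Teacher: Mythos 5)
Your proposal is correct and is exactly the route the paper intends: the corollary is the case $r=\infty$ of Theorem \ref{thm:200308-2}, combined with the identification ${\mathcal M}^p_{q,\infty}({\mathbb R}^n)={\rm W}{\mathcal M}^p_q({\mathbb R}^n)$ stated in the introduction, which you verify correctly via the iterated suprema over $\lambda$ and $Q$.
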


Theorems \ref{thm:200308-1} and \ref{thm:200308-2} are special cases of Theorems \ref{thm:200308-3} and \ref{thm:200308-4} to follow, respectively, which concerns the decomposition of Hardy-Morrey-Lorentz spaces.
Moreover, Theorem \ref{thm:200929-1} stands for the critical case $t=1$ in Theorem \ref{thm:200308-3}.
Recall that, for $0<q\le p<\infty$ and $0<r\le\infty$, the {\it Hardy-Morrey-Lorentz space} $H{\mathcal M}^p_{q,r}({\mathbb R}^n)$ is defined to be the set of all $f\in{\mathcal S}'({\mathbb R}^n)$ for which the quasi-norm
$
\|f\|_{H{\mathcal M}^p_{q,r}}\equiv\|\sup_{t>0}|e^{t\Delta}f|\|_{\mathcal{M}^p_{q,r}}
$
is finite, where $e^{t\Delta}f$ stands for the heat expansion of $f$ for $t>0$;
\begin{equation*}%\label{heat kernel}
e^{t\Delta}f(x)
=
\left\langle\frac1{\sqrt{(4\pi t)^n}}\exp\left(-\frac{|x-\cdot|^2}{4t}\right),f\right\rangle,
\quad x\in{\mathbb R}^n.
\end{equation*}
In addition, $H{\mathcal M}^p_{q,\infty}({\mathbb R}^n)$ coincides with the Hardy-weak Morrey space $H{\rm W}{\mathcal M}^p_q({\mathbb R}^n)$, introduced by Ho in \cite{Ho17}.

\begin{theorem}\label{thm:200308-3}
Suppose that the parameters $p,q,r,s,t,v$ satisfy
\begin{equation*}%\label{eq:200308-3.1}
0<q \le p<\infty, \quad 0<r\le\infty, \quad
1<t \le s<\infty, \quad
0<v\le1,
\end{equation*}
\begin{equation*}
q<t, \quad p<s, \quad v<\min(q,r),
\end{equation*}
Write $d_v\equiv[n(1/v-1)]$.
Assume that
$\{Q_j\}_{j=1}^{\infty} \subset {\mathcal Q}({\mathbb R}^n)$,
$\{a_j\}_{j=1}^{\infty} \subset {\rm W}{\mathcal M}^s_t({\mathbb R}^n)\cap{\mathcal P}_{d_v}({\mathbb R}^n)^\perp$
and
$\{\lambda_j\}_{j=1}^{\infty} \subset[0,\infty)$
fulfill
\begin{equation*}%\label{eq:200308-3.2}
\|a_j\|_{{\rm W}{\mathcal M}^s_t}\le|Q_j|^{\frac1s}, \quad
{\rm supp}(a_j) \subset Q_j, \quad
\left\|
\left(\sum_{j=1}^{\infty}(\lambda_j\chi_{Q_j})^v\right)^{\frac1v}
\right\|_{{\mathcal M}^p_{q,r}}
<\infty.
\end{equation*}
Then
$f=\sum_{j=1}^{\infty} \lambda_j a_j$
converges in
${\mathcal S}'({\mathbb R}^n)$
and satisfies
\begin{equation}\label{eq:200308-3.3}
\|f\|_{H{\mathcal M}^p_{q,r}}
\lesssim_{p,q,r,s,t}
\left\|
\left(\sum_{j=1}^{\infty}(\lambda_j\chi_{Q_j})^v\right)^{\frac1v}
\right\|_{{\mathcal M}^p_{q,r}}.
\end{equation}
\end{theorem}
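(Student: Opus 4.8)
The plan is to derive Theorem \ref{thm:200308-3} from Theorem \ref{thm:200308-1} by splitting the heat-maximal function of each atom into a piece living near its cube --- which is again an admissible ${\rm W}\mathcal M^s_t$-atom --- and a tail piece, which the vanishing moments make pointwise small. Write $c_j,\ell_j$ for the centre and side length of $Q_j$, $Mg$ for the Hardy--Littlewood maximal function, $p_t$ for the Gauss--Weierstrass kernel, and $\mathcal Mg\equiv\sup_{t>0}|e^{t\Delta}g|$. The starting point is that, because $t>1$, weak-$L^t$ embeds into $L^1$ on sets of finite measure; taking $Q=Q_j$ in the definition of the ${\rm W}\mathcal M^s_t$-quasinorm gives $\|a_j\chi_{Q_j}\|_{{\rm W}L^t}\le|Q_j|^{1/t}$, hence $\|a_j\|_{L^1}\lesssim|Q_j|^{1-1/t}\|a_j\chi_{Q_j}\|_{{\rm W}L^t}\lesssim|Q_j|$; in particular $a_j\in L^1({\mathbb R}^n)$ and $\mathcal Ma_j\lesssim Ma_j$ pointwise.

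\emph{Convergence in ${\mathcal S}'$.} Fix $\varphi\in{\mathcal S}({\mathbb R}^n)$. Since $a_j\in\mathcal P_{d_v}({\mathbb R}^n)^\perp$, subtract from $\varphi$ its degree-$d_v$ Taylor polynomial at $c_j$; Taylor's theorem and $\|a_j\|_{L^1}\lesssim|Q_j|$ give, for every $N$, $|\langle a_j,\varphi\rangle|\lesssim\ell_j^{\,d_v+1}\|a_j\|_{L^1}\sup_{Q_j}|\nabla^{d_v+1}\varphi|\lesssim_{\varphi,N}\ell_j^{\,n+d_v+1}\langle c_j\rangle^{-N}$. Because $d_v=[n(1/v-1)]$ forces $n+d_v+1>n/v$, the series $\sum_j\lambda_j\ell_j^{\,n+d_v+1}\langle c_j\rangle^{-N}$ converges: this is routine bookkeeping using only the finiteness of the right-hand side of \eqref{eq:200308-3.3} (organise the cubes by dyadic scale and by distance to the origin, use the $\ell^v$-Hölder inequality together with the Morrey--Lorentz control on each group, and sum the geometric series, which closes thanks to the surplus $n+d_v+1>n/v$). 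Thus $\sum_j\lambda_j\langle a_j,\varphi\rangle$ converges absolutely, so $f=\sum_j\lambda_j a_j\in{\mathcal S}'({\mathbb R}^n)$; and since $e^{t\Delta}$ is continuous on ${\mathcal S}'$ we get $e^{t\Delta}f=\sum_j\lambda_j e^{t\Delta}a_j$ and $\mathcal Mf\le\sum_j\lambda_j\mathcal Ma_j$ pointwise.

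\emph{The quasinorm estimate.} Fix a dimensional constant $c_0>1$, put $\widehat Q_j\equiv c_0Q_j$, and write $\mathcal Ma_j=(\mathcal Ma_j)\chi_{\widehat Q_j}+(\mathcal Ma_j)\chi_{(\widehat Q_j)^c}$. For the near part, $(\mathcal Ma_j)\chi_{\widehat Q_j}\le(Ma_j)\chi_{\widehat Q_j}$ is supported in $\widehat Q_j$, and, $M$ being bounded on ${\rm W}\mathcal M^s_t$ (here $1<t\le s<\infty$), its ${\rm W}\mathcal M^s_t$-quasinorm is $\lesssim\|a_j\|_{{\rm W}\mathcal M^s_t}\le|Q_j|^{1/s}\sim|\widehat Q_j|^{1/s}$; hence a fixed multiple of $\{(\mathcal Ma_j)\chi_{\widehat Q_j}\}$ is an admissible family of atoms over $\{\widehat Q_j\}$ for Theorem \ref{thm:200308-1} (our hypotheses on $p,q,r,s,t,v$ imply its, since $0<t\le s$ is weaker than $1<t\le s$), so that theorem, combined with the standard dilation lemma --- enlarging all cubes by a fixed factor costs at most a constant in $\|(\sum_j(\lambda_j\chi_{(\cdot)})^v)^{1/v}\|_{\mathcal M^p_{q,r}}$, where $v<\min(q,r)$ enters --- yields $\|\sum_j\lambda_j(\mathcal Ma_j)\chi_{\widehat Q_j}\|_{\mathcal M^p_{q,r}}\lesssim\|(\sum_j(\lambda_j\chi_{Q_j})^v)^{1/v}\|_{\mathcal M^p_{q,r}}$. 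For the tail part, when $x\notin\widehat Q_j$ subtract from $y\mapsto p_t(x-y)$ its degree-$d_v$ Taylor polynomial at $c_j$ and use the moment condition to get $|e^{t\Delta}a_j(x)|\lesssim\ell_j^{\,d_v+1}\|a_j\|_{L^1}\sup_{y\in Q_j}|\nabla^{d_v+1}p_t(x-y)|\lesssim\ell_j^{\,n+d_v+1}|x-c_j|^{-(n+d_v+1)}$, uniformly in $t>0$ after optimising the Gaussian bound $t^{-(n+d_v+1)/2}e^{-c|x-c_j|^2/t}$ over $t$. Thus $(\mathcal Ma_j)\chi_{(\widehat Q_j)^c}(x)\lesssim\ell_j^{\,n+d_v+1}|x-c_j|^{-(n+d_v+1)}\chi_{(\widehat Q_j)^c}(x)$, and one bounds $\|\sum_j\lambda_j(\mathcal Ma_j)\chi_{(\widehat Q_j)^c}\|_{\mathcal M^p_{q,r}}\lesssim\|(\sum_j(\lambda_j\chi_{Q_j})^v)^{1/v}\|_{\mathcal M^p_{q,r}}$ by a standard off-diagonal estimate: decompose $(\widehat Q_j)^c$ into the annuli $2^{k+1}\widehat Q_j\setminus2^k\widehat Q_j$, $k\ge0$, bound the $\ell^1$-sum over $j$ by the $\ell^v$-aggregate $(\sum_j\lambda_j^v(\cdot))^{1/v}$ (using $v\le1$), apply the dilation lemma at each scale $2^k$, and sum the geometric series, which converges precisely because $n+d_v+1>n/v$ and $v<\min(q,r)$. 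Adding the two contributions and using $\mathcal Mf\le\sum_j\lambda_j\mathcal Ma_j$ gives \eqref{eq:200308-3.3}.

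The principal obstacle is the tail analysis: extracting the clean decay $|x-c_j|^{-(n+d_v+1)}$ from the moment cancellation \emph{uniformly in the heat time $t$}, and then summing the resulting doubly-indexed family so that the exponent surplus $n+d_v+1>n/v$ --- which is exactly why $d_v=[n(1/v-1)]$ is the right threshold --- converts into convergence against the $\ell^v$-normalised Morrey--Lorentz control, with $v<\min(q,r)$ keeping the dilation estimates bounded; the same cancellation-and-summation mechanism, tested against Schwartz seminorms rather than a maximal function, also drives the ${\mathcal S}'$-convergence.
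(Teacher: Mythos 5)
Your estimate of $\|f\|_{H{\mathcal M}^p_{q,r}}$ is sound and follows essentially the paper's route: the near part $(\mathcal Ma_j)\chi_{\widehat Q_j}\le (Ma_j)\chi_{\widehat Q_j}$ is turned into a new ${\rm W}{\mathcal M}^s_t$-atom via the boundedness of $M$ on ${\mathcal M}^s_{t,\infty}({\mathbb R}^n)$ and fed into Theorem \ref{thm:200308-1}, while the tail $(\mathcal Ma_j)\chi_{(\widehat Q_j)^c}\lesssim (M\chi_{Q_j})^{(n+d_v+1)/n}$ is summed with the vector-valued Fefferman--Stein inequality; your annular decomposition with $\chi_{2^k\widehat Q_j}\le (2^{kn}M\chi_{Q_j})^\theta$, $1/v<\theta<(n+d_v+1)/n$, is just an unpacked version of the paper's $I_2$.

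The gap is in the ${\mathcal S}'$-convergence. You apply the moment-cancellation bound $|\langle a_j,\varphi\rangle|\lesssim_{\varphi,N}\ell(Q_j)^{\,n+d_v+1}\langle c(Q_j)\rangle^{-N}$ to \emph{every} cube and assert that $\sum_j\lambda_j\ell(Q_j)^{\,n+d_v+1}\langle c(Q_j)\rangle^{-N}$ converges by bookkeeping against the Morrey--Lorentz control. It does not: the only information the hypothesis gives on a single coefficient is $\lambda_j\lesssim |Q_j|^{-1/p}\bigl\|\bigl(\sum_k(\lambda_k\chi_{Q_k})^v\bigr)^{1/v}\bigr\|_{{\mathcal M}^p_{q,r}}$ (Proposition \ref{prop:Mpqr-indicator}), so a large cube centred at the origin with $\ell(Q_j)=2^{j}$ contributes on the order of $2^{j(n+d_v+1-n/p)}$, which diverges as $j\to\infty$ because $n+d_v+1>n\ge n/p$ is automatic for $p\ge1$ (and more generally whenever $n+d_v+1>n/p$). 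The surplus $n+d_v+1>n/v$ only helps on the small scales $\ell(Q_j)\le 1$. For the large scales the Taylor remainder must be abandoned: one uses instead the plain size estimate $|\langle a_j,\varphi\rangle|\lesssim \|a_j\|_{L^1}\sup_{Q_j}\langle\cdot\rangle^{-(n+1)}\lesssim |Q_j|^{1-1/t}\,\|a_j\chi_{Q_j}\|_{L^{t,\infty}}\sup_{Q_j}\langle\cdot\rangle^{-(n+1)}\lesssim|Q_j|^{1/s}\sup_{Q_j}\langle\cdot\rangle^{-(n+1)}$, and then convergence over the scales $\ell(Q_j)=2^{-m}$, $m\le 0$, comes from the geometric factor $2^{-mn(1/s-1/p)}$, i.e.\ from the hypothesis $p<s$ --- exactly the split the paper makes in Subsection \ref{ss:Proof of Thm1} between $m\le 0$ (size estimate, $p<s$) and $m\ge 1$ (cancellation, Lemma \ref{lem:210924-1}). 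Once you insert this case distinction, the rest of your bookkeeping (grouping by scale and by lattice position, controlling the $\ell^1$-sums of coefficients on each group by the $\ell^v$-aggregate) goes through as in the paper.
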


\begin{theorem}\label{thm:200929-1}
Suppose that the parameters $p,q,r,s,v$ satisfy
\begin{equation*}%\label{eq:200929-1.1}
0<q \le p<\infty, \quad 0<r\le\infty, \quad
1\le s<\infty, \quad
0<v\le1, \quad
q<1, \quad p<s, \quad v<\min(q,r).
\end{equation*}
Write and $d_v\equiv[n(1/v-1)]$.
Assume that
$\{Q_j\}_{j=1}^{\infty} \subset {\mathcal Q}({\mathbb R}^n)$,
$\{a_j\}_{j=1}^{\infty} \subset{\mathcal M}^s_1({\mathbb R}^n)\cap{\mathcal P}_{d_v}({\mathbb R}^n)^\perp$
and
$\{\lambda_j\}_{j=1}^{\infty} \subset[0,\infty)$
fulfill
\begin{equation*}%\label{eq:200929-1.2}
\|a_j\|_{{\mathcal M}^s_1}\le|Q_j|^{\frac1s}, \quad
{\rm supp}(a_j) \subset Q_j, \quad
\left\|
\left(\sum_{j=1}^{\infty}(\lambda_j\chi_{Q_j})^v\right)^{\frac1v}
\right\|_{{\mathcal M}^p_{q,r}}
<\infty.
\end{equation*}
Then
$f=\sum_{j=1}^{\infty} \lambda_j a_j$
converges in
${\mathcal S}'({\mathbb R}^n)$
and satisfies
\begin{equation}\label{eq:200929-1.3}
\|f\|_{H{\mathcal M}^p_{q,r}}
\lesssim_{p,q,r,s}
\left\|
\left(\sum_{j=1}^{\infty}(\lambda_j\chi_{Q_j})^v\right)^{\frac1v}
\right\|_{{\mathcal M}^p_{q,r}}.
\end{equation}
\end{theorem}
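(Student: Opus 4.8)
The plan is to adapt the proof of Theorem~\ref{thm:200308-3} to the excluded endpoint $t=1$. The crucial observation is that although $\mathrm{W}\mathcal{M}^s_1(\mathbb{R}^n)$ is too large to serve as an atom space here, the atoms are now assumed to lie in the \emph{strong} Morrey space $\mathcal{M}^s_1(\mathbb{R}^n)$, on which the Hardy--Littlewood maximal operator $M$ is still bounded \emph{into} $\mathrm{W}\mathcal{M}^s_1(\mathbb{R}^n)$; this is exactly what makes the scheme work. I would first prove the a~priori estimate $\|\sup_{u>0}|e^{u\Delta}f|\|_{\mathcal{M}^p_{q,r}}\lesssim\mathcal{A}$, where $\mathcal{A}\equiv\|(\sum_j(\lambda_j\chi_{Q_j})^v)^{1/v}\|_{\mathcal{M}^p_{q,r}}$, working first with finite partial sums; the convergence of $\sum_j\lambda_j a_j$ in $\mathcal{S}'(\mathbb{R}^n)$ then follows from the vanishing moments of the $a_j$ together with the finiteness of $\mathcal{A}$, by the same tail estimate as in the proof of Theorem~\ref{thm:200308-3}. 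Two elementary facts will be used throughout: (i) since $\mathrm{supp}(a_j)\subset Q_j$ and $\|a_j\|_{\mathcal{M}^s_1}\le|Q_j|^{1/s}$, the Morrey embedding gives $\|a_j\|_{L^1}\le|Q_j|^{1-1/s}\|a_j\|_{\mathcal{M}^s_1}\le|Q_j|$; and (ii) $\sup_{u>0}|e^{u\Delta}g|\lesssim Mg$ pointwise.

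Next I would split, for each $j$, the estimate of $\sup_{u>0}|e^{u\Delta}a_j|$ over the near region $2\sqrt n\,Q_j$ and the far region $(2\sqrt n\,Q_j)^c$. On the far region, I would subtract from the heat kernel $y\mapsto p_u(x-y)$ its Taylor polynomial of degree $d_v$ at $c(Q_j)$; the Taylor remainder, the Gaussian bounds $\sup_{u>0}|\partial^\beta p_u(z)|\lesssim|z|^{-n-|\beta|}$, the moment conditions, and fact (i) yield $\sup_{u>0}|e^{u\Delta}a_j(x)|\lesssim(\ell(Q_j)/|x-c(Q_j)|)^{n+d_v+1}$ for $x\notin 2\sqrt n\,Q_j$. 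Since $d_v=[n(1/v-1)]$ forces $n+d_v+1>n/v$, and since $M\chi_{Q_j}(x)\sim(\ell(Q_j)/|x-c(Q_j)|)^n\le1$ off $2\sqrt n\,Q_j$, the right-hand side is dominated by $(M\chi_{Q_j}(x))^{1/v}$; summing over $j$ and invoking superadditivity of $\tau\mapsto\tau^{1/v}$ (as $1/v\ge1$),
\begin{equation*}
\sum_{j}\lambda_j\,\bigl(\sup_{u>0}|e^{u\Delta}a_j|\bigr)\chi_{(2\sqrt n Q_j)^c}
\lesssim
\Bigl(\sum_{j}\lambda_j^{\,v}M\chi_{Q_j}\Bigr)^{1/v}.
\end{equation*}
On the near region I would put $g_j\equiv(Ma_j)\chi_{2\sqrt n Q_j}$, so that $(\sup_{u>0}|e^{u\Delta}a_j|)\chi_{2\sqrt n Q_j}\lesssim g_j$ by fact (ii). By the boundedness $M\colon\mathcal{M}^s_1\to\mathrm{W}\mathcal{M}^s_1$ and monotonicity of the quasi-norm, $\|g_j\|_{\mathrm{W}\mathcal{M}^s_1}\lesssim\|a_j\|_{\mathcal{M}^s_1}\le|Q_j|^{1/s}\sim|2\sqrt n Q_j|^{1/s}$, so a fixed dimensional multiple of each $g_j$ is an admissible atom for Theorem~\ref{thm:200308-1} with parameters $(p,q,r,s,1,v)$ and supporting cube $2\sqrt n\,Q_j$; the hypotheses of that theorem hold precisely because $q<1=t$, $p<s$ and $v<\min(q,r)$. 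Hence Theorem~\ref{thm:200308-1} bounds $\|\sum_j\lambda_j g_j\|_{\mathcal{M}^p_{q,r}}$ by $\|(\sum_j(\lambda_j\chi_{2\sqrt n Q_j})^v)^{1/v}\|_{\mathcal{M}^p_{q,r}}$, and since $M\chi_{Q_j}\gtrsim1$ on $2\sqrt n\,Q_j$ we have $\chi_{2\sqrt n Q_j}\lesssim_v(M\chi_{Q_j})^{1/v}$, which again reduces this quantity to $\|(\sum_j\lambda_j^{\,v}M\chi_{Q_j})^{1/v}\|_{\mathcal{M}^p_{q,r}}$ up to a constant depending on $v$.

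It then remains to prove $\|(\sum_j\lambda_j^{\,v}M\chi_{Q_j})^{1/v}\|_{\mathcal{M}^p_{q,r}}\lesssim\mathcal{A}$. Using the rescaling identity $\|h^{1/v}\|_{\mathcal{M}^p_{q,r}}=\|h\|^{1/v}_{\mathcal{M}^{p/v}_{q/v,r/v}}$, this is exactly the maximal inequality for countable sums of characteristic functions of cubes, $\|\sum_j w_j M\chi_{Q_j}\|_{\mathcal{M}^{p/v}_{q/v,r/v}}\lesssim\|\sum_j w_j\chi_{Q_j}\|_{\mathcal{M}^{p/v}_{q/v,r/v}}$ for $w_j\equiv\lambda_j^{\,v}\ge0$, which is available because $1<q/v\le p/v<\infty$ — here the hypothesis $v<\min(q,r)$ enters. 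Combining the near and far estimates yields \eqref{eq:200929-1.3} for the partial sums uniformly, and hence in the limit. I expect the far-region estimate to be the main obstacle: one must extract from the $d_v$ vanishing moments exactly the decay exponent $n+d_v+1$ and recognise, via $d_v=[n(1/v-1)]$, that it strictly exceeds the critical exponent $n/v$ and is therefore absorbed by a power of $M\chi_{Q_j}$. The structural point special to the endpoint $t=1$ is that the atoms must be taken in the \emph{strong} space $\mathcal{M}^s_1$ rather than $\mathrm{W}\mathcal{M}^s_1$: this is precisely what places $Ma_j$ in $\mathrm{W}\mathcal{M}^s_1$ with the correct normalisation and allows the near part to be fed back into Theorem~\ref{thm:200308-1}.
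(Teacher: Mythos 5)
Your architecture is essentially the paper's: you control the near part $\chi_{3Q_j}Ma_j$ by observing that the weak-type boundedness of $M$ on ${\mathcal M}^s_1({\mathbb R}^n)$ turns it into a ${\rm W}{\mathcal M}^s_1$-atom admissible for Theorem \ref{thm:200308-1} with $t=1$ (this is exactly the remark closing Subsection \ref{ss: proof atom}), and you control the far part by the decay $(\ell(Q_j)/|x-c(Q_j)|)^{n+d_v+1}$ extracted from the moment conditions (the content of Lemma \ref{lem:210723-1}). The gap is in your final reduction. You degrade $(M\chi_{Q_j})^{(n+d_v+1)/n}$ to $(M\chi_{Q_j})^{1/v}$, pull the exponent $1/v$ out by superadditivity, and arrive at $\|(\sum_j\lambda_j^v M\chi_{Q_j})^{1/v}\|_{{\mathcal M}^p_{q,r}}=\|\sum_j\lambda_j^v M\chi_{Q_j}\|_{{\mathcal M}^{p/v}_{q/v,r/v}}^{1/v}$, which you then propose to bound via the $\ell^1$-summed maximal inequality $\|\sum_j w_j M\chi_{Q_j}\|\lesssim\|\sum_j w_j\chi_{Q_j}\|$ on the grounds that $q/v>1$. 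That inequality is false, already in $L^P({\mathbb R}^n)$ for $P>1$: let $\{Q_k\}$ be the $N^n$ unit cubes tiling $[0,N]^n$ and $w_k=1$; then $\sum_k M\chi_{Q_k}(x)\gtrsim\sum_k(1+|x-c(Q_k)|)^{-n}\gtrsim\log N$ on $[0,N]^n$, so the left-hand side is at least $N^{n/P}\log N$ while the right-hand side equals $N^{n/P}$. The same example defeats the ${\mathcal M}^{p/v}_{q/v,r/v}$ version. This is precisely why Proposition \ref{prop:200324-1} is stated only for $\ell^u$ with $u>1$; your intermediate quantity $\|(\sum_j\lambda_j^v M\chi_{Q_j})^{1/v}\|_{{\mathcal M}^p_{q,r}}$ is genuinely not controlled by ${\mathcal A}$, so the reduction itself, not merely its last line, must be changed.

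The repair is the device used in the paper: do not reduce to the first power of $M\chi_{Q_j}$. For the far part keep $\eta:=(n+d_v+1)/n>1$, write $\sum_j\lambda_j(M\chi_{Q_j})^\eta=\sum_j\bigl(M[\lambda_j^{1/\eta}\chi_{Q_j}]\bigr)^\eta$, and apply Proposition \ref{prop:200324-1} with $u=\eta$ in the dilated space ${\mathcal M}^{\eta p}_{\eta q,\eta r}$; the hypotheses $\eta q>1$ and $\eta r>1$ hold exactly because $q,r>v>n/(n+d_v+1)$, which is where $d_v=[n(1/v-1)]$ is genuinely used (note that the undilated Fefferman--Stein estimate is unavailable here since $q<1$). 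For the near part, replace your bound $\chi_{2\sqrt n Q_j}\lesssim(M\chi_{Q_j})^{1/v}$ by $\chi_{2\sqrt n Q_j}\lesssim(M\chi_{Q_j})^{\theta}$ with a fixed $\theta>1/v$, so that $\bigl(\sum_j(\lambda_j\chi_{2\sqrt n Q_j})^v\bigr)^{1/v}\lesssim\bigl\|\bigl(M[\lambda_j^{1/\theta}\chi_{Q_j}]\bigr)_j\bigr\|_{\ell^{\theta v}}^{\theta}$ and Proposition \ref{prop:200324-1} applies with $u=\theta v>1$ in ${\mathcal M}^{\theta p}_{\theta q,\theta r}$, as in Subsection \ref{ss:Proof of Thm1}. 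With these two substitutions your argument closes; as written, it does not.
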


\begin{theorem}\label{thm:200308-4}
Suppose that the real parameters $p$, $q$, $r$ and $K$ satisfy
\begin{equation*}%\label{eq:200308-4.1}
0<q\le p<\infty, \quad
0<r\le\infty, \quad
K\in{\mathbb N}_0\cap\left(\frac n{q_0}-n-1,\infty\right),
\end{equation*}
where $q_0\equiv\min(1,q)$.
Let $f\in H{\mathcal M}^p_{q,r}({\mathbb R}^n)$.
Then there exists a triplet
$\{\lambda_j\}_{j=1}^\infty\subset[0,\infty)$,
$\{Q_j\}_{j=1}^\infty\subset{\mathcal Q}({\mathbb R}^n)$
and
$
\{a_j\}_{j=1}^\infty\subset L^\infty({\mathbb R}^n)\cap{\mathcal P}_K^\perp({\mathbb R}^n)
$
such that
$f=\sum_{j=1}^\infty\lambda_ja_j$ in ${\mathcal S}'({\mathbb R}^n)$
and that, for all $v>0$,
\begin{equation*}%\label{eq:200308-4.2}
|a_j|\le\chi_{Q_j}, \quad
\left\|
\left(\sum_{j=1}^\infty(\lambda_j\chi_{Q_j})^v\right)^{\frac1v}
\right\|_{{\mathcal M}^p_{q,r}}
\lesssim_v
\|f\|_{H{\mathcal M}^p_{q,r}}.
\end{equation*}
\end{theorem}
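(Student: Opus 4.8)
The plan is to run the Calderón--Zygmund decomposition of the tempered distribution $f$ along the level sets of a grand maximal function, exactly as in the classical proof of the atomic decomposition of $H^p({\mathbb R}^n)$, and then to control the ``counting function'' $\bigl(\sum_j(\lambda_j\chi_{Q_j})^v\bigr)^{1/v}$ by that maximal function pointwise. The first ingredient I would use is the equivalence
\begin{equation*}
\|f\|_{H{\mathcal M}^p_{q,r}}
\sim
\|{\mathcal M}_N f\|_{{\mathcal M}^p_{q,r}},
\qquad
{\mathcal M}_N f(x)\equiv\sup_{t>0}\ \sup_{\varphi}\,|\varphi_t*f(x)|,
\end{equation*}
where the inner supremum runs over all $\varphi\in{\mathcal S}({\mathbb R}^n)$ with $\sum_{|\alpha|\le N}\sup_{y}\langle y\rangle^{N}|\partial^\alpha\varphi(y)|\le1$ and $N=N(n,K)$ is a fixed large integer; this is the Morrey--Lorentz analogue of the classical comparison of the heat, nontangential, and grand maximal functions on $H^p$, available once the basic machinery on ${\mathcal M}^p_{q,r}({\mathbb R}^n)$ (monotonicity of the quasi-norm, vector-valued maximal inequalities, density of smooth functions in the range $q>1$) is in place. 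In particular ${\mathcal M}_N f$ is lower semicontinuous, finite a.e., belongs to ${\mathcal M}^p_{q,r}({\mathbb R}^n)$, and (since no nonzero constant lies in ${\mathcal M}^p_{q,r}$ with $p<\infty$) it cannot be bounded below by a positive constant.

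For $k\in{\mathbb Z}$ set $\Omega_k\equiv\{x\in{\mathbb R}^n:{\mathcal M}_N f(x)>2^k\}$, a proper open subset of ${\mathbb R}^n$. I would take a Whitney decomposition $\Omega_k=\bigcup_i Q_{k,i}$ into cubes of bounded overlap with $\ell(Q_{k,i})\sim{\rm dist}(Q_{k,i},\Omega_k^{\,c})$, a smooth partition of unity $\{\zeta_{k,i}\}_i$ subordinate to a fixed dilation $\{cQ_{k,i}\}_i$ with $cQ_{k,i}\subset\Omega_k$, and, for each $(k,i)$, the polynomial $c_{k,i}\in{\mathcal P}_K({\mathbb R}^n)$ equal to the orthogonal projection of $f$ onto ${\mathcal P}_K({\mathbb R}^n)$ with respect to the measure $\bigl(\int_{{\mathbb R}^n}\zeta_{k,i}\bigr)^{-1}\zeta_{k,i}\,{\rm d}x$. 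Setting $b_{k,i}\equiv(f-c_{k,i})\zeta_{k,i}$ and letting $A_{k,i}$ be the corrected blocks of the standard construction (obtained from $b_{k,i}$ by subtracting the contributions $b_{k+1,\ell}\zeta_{k,i}$ of the finer level, each re-adjusted by a polynomial of degree $\le K$ so as to restore the vanishing moments), one obtains the usual Calderón--Zygmund estimates: ${\rm supp}(A_{k,i})\subset cQ_{k,i}$, $A_{k,i}\in L^\infty({\mathbb R}^n)\cap{\mathcal P}_K^\perp({\mathbb R}^n)$, $\|A_{k,i}\|_{L^\infty}\le C\,2^k$ with $C=C(n,N,K)$, and $f=\sum_k\sum_i A_{k,i}$ in ${\mathcal S}'({\mathbb R}^n)$. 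The hypothesis $K\in{\mathbb N}_0\cap(n/q_0-n-1,\infty)$, $q_0=\min(1,q)$, is exactly what makes the polynomial corrections summable at the relevant scales and the telescoping series of good and bad parts reconstruct $f$ in ${\mathcal S}'({\mathbb R}^n)$.

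Now relabel the pairs $(k,i)$ by a single index $j$ and set $Q_j\equiv cQ_{k,i}\in{\mathcal Q}({\mathbb R}^n)$, $\lambda_j\equiv C\,2^k\in[0,\infty)$, and $a_j\equiv A_{k,i}/(C\,2^k)$. Then $|a_j|\le\chi_{Q_j}$, $a_j\in L^\infty({\mathbb R}^n)\cap{\mathcal P}_K^\perp({\mathbb R}^n)$, and $f=\sum_j\lambda_j a_j$ in ${\mathcal S}'({\mathbb R}^n)$. For the remaining estimate, observe that for each fixed $k$ the cubes $\{cQ_{k,i}\}_i$ still have overlap bounded by a dimensional constant $L$ and are contained in $\Omega_k$, so
\begin{equation*}
\sum_j(\lambda_j\chi_{Q_j})^v
=C^v\sum_{k\in{\mathbb Z}}2^{kv}\sum_i\chi_{cQ_{k,i}}
\le L\,C^v\sum_{k\in{\mathbb Z}}2^{kv}\chi_{\Omega_k};
\end{equation*}
since the sets $\Omega_k$ decrease in $k$ and $x\in\Omega_k\iff 2^k<{\mathcal M}_N f(x)$ for a.e.\ $x$, summing the geometric series gives $\sum_{k}2^{kv}\chi_{\Omega_k}(x)\le\frac{2^v}{2^v-1}({\mathcal M}_N f(x))^v$ a.e., whence $\bigl(\sum_j(\lambda_j\chi_{Q_j})^v\bigr)^{1/v}\lesssim_v{\mathcal M}_N f$ pointwise and therefore
\begin{equation*}
\left\|\left(\sum_{j=1}^\infty(\lambda_j\chi_{Q_j})^v\right)^{\frac1v}\right\|_{{\mathcal M}^p_{q,r}}
\lesssim_v
\|{\mathcal M}_N f\|_{{\mathcal M}^p_{q,r}}
\sim
\|f\|_{H{\mathcal M}^p_{q,r}}
\end{equation*}
by monotonicity of the Morrey--Lorentz quasi-norm, which is the claimed bound.

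The main obstacle is the rigorous execution of the second step: carrying out the Calderón--Zygmund decomposition for the \emph{distribution} $f$ (not an $L^1_{\rm loc}$ function) so that the size bound $\|A_{k,i}\|_{L^\infty}\lesssim 2^k$ and the moment conditions genuinely hold, and so that $\sum_{k,i}A_{k,i}=f$ in ${\mathcal S}'({\mathbb R}^n)$; this rests on choosing $N$ large, on the stated lower bound for $K$, and on replacing the classical $L^{q_0}$-based tail estimates by estimates for ${\mathcal M}_N f$ in the Morrey--Lorentz quasi-norm. The other prerequisite that must be secured beforehand is the grand-maximal-function characterization of $H{\mathcal M}^p_{q,r}({\mathbb R}^n)$ used in the first step. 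Once these are granted, the geometric-series argument of the third step is soft and uses nothing beyond the bounded overlap of Whitney cubes.
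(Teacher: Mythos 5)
Your overall architecture (grand maximal characterization, Calder\'on--Zygmund decomposition along the level sets $\Omega_k=\{\mathcal{M}_Nf>2^k\}$, then the bounded-overlap/geometric-series bound $(\sum_j(\lambda_j\chi_{Q_j})^v)^{1/v}\lesssim_v\mathcal{M}_Nf$) is the classical one, and the first and third steps are sound: the grand maximal characterization is exactly Proposition \ref{prop:200308-2} of the paper, and your pointwise bound on the counting function is the content of Lemma \ref{lem:210407-1}(3). But the step you yourself flag as ``the main obstacle'' --- the reconstruction $f=\sum_{k,i}A_{k,i}$ in $\mathcal{S}'(\mathbb{R}^n)$ for a genuine distribution $f$ --- is a real gap, not just a technicality, and the reason is specific to Morrey-type norms. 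In the classical $H^p$ argument the convergence $g_k\to f$ in $\mathcal{S}'$ as $k\to+\infty$ is obtained from $\|\mathcal{M}(\sum_ib_{k,i})\|_{L^p}^p\lesssim\int_{\Omega_k}(\mathcal{M}f)^p\,dx\to0$, i.e.\ from the absolute continuity of the $L^p$ quasi-norm together with the continuous embedding $H^p\hookrightarrow\mathcal{S}'$. For $q<p$ the quantity $\|\mathcal{M}f\,\chi_{\Omega_k}\|_{\mathcal{M}^p_{q,r}}$ need \emph{not} tend to zero as $k\to\infty$ (Morrey--Lorentz quasi-norms are not absolutely continuous), and a direct estimate of $\sum_i|\langle b_{k,i},\varphi\rangle|$ via the weak-type bound $|\Omega_k\cap Q|\lesssim 2^{-kq}|Q|^{1-q/p}$ gives $2^k|\Omega_k\cap Q|\lesssim 2^{k(1-q)}$, which diverges when $q<1$. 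So ``the usual Calder\'on--Zygmund estimates'' do not reconstruct $f$ here, and your proposal does not supply a substitute.

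The paper sidesteps exactly this difficulty by a different mechanism: it first applies a ready-made decomposition (Lemma \ref{lem:210407-1}) to the mollified functions $e^{t\Delta}f$, which are locally integrable so that the decomposition is of honest functions; it then lets $t=t_l\downarrow0$ and extracts limiting coefficients $\lambda_Q$ and atoms $a_Q$ by weak-$*$ compactness of the unit ball of $L^\infty(\mathbb{R}^n)$. The interchange of the limit with the sum over dyadic cubes is justified not by absolute continuity but by the uniform absolute summability estimates \eqref{eq:210930-1} and \eqref{eq:210930-2}, the latter resting on Lemma \ref{lem:210924-1} and hence on the hypothesis $n+K+1>n/q_0$ (this is where the lower bound on $K$ actually enters, rather than in the polynomial corrections as you suggest). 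If you want to salvage your direct approach, you would need to prove the $\mathcal{S}'$-convergence of the telescoping series by a decay-in-$m$ argument of the same type as Lemma \ref{lem:210924-1} (testing each $A_{k,i}$ against $\varphi$ and summing over scales, using the moment conditions for small cubes and the normalization $\lambda_Q|Q|^{1/p}\lesssim\|\mathcal{M}f\|_{\mathcal{M}^p_{q,r}}$ for large ones), rather than appealing to the classical dominated-convergence step.
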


Concerning ${\mathcal M}^p_{q,r}({\mathbb R}^n)$ and $H{\mathcal M}^p_{q,r}({\mathbb R}^n)$, we have the following assertion:

\begin{proposition}\label{prop:200308-1}
Let $1\le q\le p<\infty$ and $0<r\le\infty$.
\begin{enumerate}
\item If $f \in {\mathcal M}^p_{q,r}({\mathbb R}^n)$ and $q>1$, then $f \in H{\mathcal M}^p_{q,r}({\mathbb R}^n)$.
\item Assume that $q>1$ or $q=1\ge r$.
If $f \in H{\mathcal M}^p_{q,r}({\mathbb R}^n)$, then $f$ can be represented by a locally integrable function and the representative belongs to ${\mathcal M}^p_{q,r}({\mathbb R}^n)$.
\end{enumerate}
\end{proposition}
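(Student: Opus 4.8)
The plan is to derive both statements from the pointwise bound $\sup_{t>0}|e^{t\Delta}f(x)|\lesssim Mf(x)$, $x\in{\mathbb R}^n$, valid for every tempered distribution $f$ of at most polynomial growth ($M$ being the Hardy--Littlewood maximal operator), combined with the mapping behaviour of $M$ on ${\mathcal M}^p_{q,r}({\mathbb R}^n)$. For (1) the substantive input is the boundedness of $M$ on ${\mathcal M}^p_{q,r}$ when $q>1$; for (2) it is the identification of the heat-semigroup limit with a locally integrable function.

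For (1) I would first note that, since $q>1$, H\"older's inequality for Lorentz spaces gives ${\mathcal M}^p_{q,r}({\mathbb R}^n)\subset L^1_{\rm loc}({\mathbb R}^n)$ and $\int_{B(0,R)}|f|\lesssim R^{n(1-1/p)}\|f\|_{{\mathcal M}^p_{q,r}}$, so that $f\in{\mathcal S}'({\mathbb R}^n)$, the heat extension $e^{t\Delta}f$ is given by the absolutely convergent Gauss--Weierstrass integral, and therefore $\sup_{t>0}|e^{t\Delta}f|\lesssim Mf$ pointwise. It then suffices to prove $\|Mf\|_{{\mathcal M}^p_{q,r}}\lesssim\|f\|_{{\mathcal M}^p_{q,r}}$ for $1<q\le p<\infty$ and $0<r\le\infty$. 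I would run the standard local/global argument: given a cube $Q$, write $f=f\chi_{2Q}+f\chi_{{\mathbb R}^n\setminus2Q}$; bound the local part using boundedness of $M$ on $L^{q,r}({\mathbb R}^n)$ (which follows from $M\colon L^{q_i}({\mathbb R}^n)\to L^{q_i}({\mathbb R}^n)$ for $q_0<q<q_1$ by real interpolation) together with $\|f\chi_{2Q}\|_{L^{q,r}}\lesssim|Q|^{1/q-1/p}\|f\|_{{\mathcal M}^p_{q,r}}$; and bound the global part by observing that for $x\in Q$ one has $M(f\chi_{{\mathbb R}^n\setminus2Q})(x)\lesssim\sup_{R\supseteq Q}|R|^{-1}\int_R|f|\lesssim\sup_{R\supseteq Q}|R|^{-1/p}\|f\|_{{\mathcal M}^p_{q,r}}\lesssim|Q|^{-1/p}\|f\|_{{\mathcal M}^p_{q,r}}$ (using H\"older in Lorentz spaces and $p<\infty$), so that $|Q|^{1/p-1/q}\|M(f\chi_{{\mathbb R}^n\setminus2Q})\chi_Q\|_{L^{q,r}}\lesssim|Q|^{1/p-1/q}|Q|^{-1/p}|Q|^{1/q}\|f\|_{{\mathcal M}^p_{q,r}}=\|f\|_{{\mathcal M}^p_{q,r}}$. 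Taking the supremum over cubes gives the maximal inequality, whence $\|f\|_{H{\mathcal M}^p_{q,r}}=\|\sup_{t>0}|e^{t\Delta}f|\|_{{\mathcal M}^p_{q,r}}\lesssim\|f\|_{{\mathcal M}^p_{q,r}}$. (This maximal inequality on ${\mathcal M}^p_{q,r}$ is of Chiarenza--Frasca type and could instead be quoted.)

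For (2) put $g\equiv\sup_{t>0}|e^{t\Delta}f|\in{\mathcal M}^p_{q,r}({\mathbb R}^n)$. The hypothesis ``$q>1$ or $q=1\ge r$'' is precisely what yields ${\mathcal M}^p_{q,r}({\mathbb R}^n)\subset L^1_{\rm loc}({\mathbb R}^n)$ (for $q>1$ by H\"older in Lorentz spaces, for $q=1\ge r$ by the embedding $L^{1,r}\hookrightarrow L^{1,1}=L^1$), so $g\in L^1_{\rm loc}$ and $|e^{t\Delta}f|\le g$ with $g\in L^1_{\rm loc}$. Since $f\in{\mathcal S}'({\mathbb R}^n)$, $e^{t\Delta}f\to f$ in ${\mathcal S}'({\mathbb R}^n)$ as $t\to0^+$. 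Along an exhaustion of ${\mathbb R}^n$ by compact cubes, $\{e^{t\Delta}f\}_{0<t\le1}$ is bounded in $L^1$, so a diagonal extraction yields a sequence $t_k\downarrow0$ and a Radon measure $\mu$ with $e^{t_k\Delta}f\to\mu$ vaguely; comparison with the ${\mathcal S}'$-limit forces $f=\mu$ as distributions. Lower semicontinuity of the total variation under vague convergence then gives $|\mu|(U)\le\liminf_k\int_U|e^{t_k\Delta}f|\,{\rm d}x\le\int_U g\,{\rm d}x$ for every open $U$, hence for every Borel set by outer regularity; since $g\in L^1_{\rm loc}$ this forces $\mu=f_0\,{\rm d}x$ with $f_0\in L^1_{\rm loc}({\mathbb R}^n)$ and $|f_0|\le g$ a.e. Monotonicity of the Lorentz quasi-norm then gives $\|f_0\chi_Q\|_{L^{q,r}}\le\|g\chi_Q\|_{L^{q,r}}$ for every cube, so $\|f_0\|_{{\mathcal M}^p_{q,r}}\le\|g\|_{{\mathcal M}^p_{q,r}}=\|f\|_{H{\mathcal M}^p_{q,r}}<\infty$; and since $f_0$ (being dominated by $g$) has polynomial $L^1$-growth, it represents $f$ as an element of ${\mathcal S}'({\mathbb R}^n)$.

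The main obstacle is the representation step in (2) --- upgrading the vague limit $\mu$ to a genuine $L^1_{\rm loc}$ function dominated by $g$; this is exactly where the hypothesis ``$q>1$ or $q=1\ge r$'' is used (to keep $g$ locally integrable), the essential tool being lower semicontinuity of total variation under vague convergence of measures. In (1) the only real content is the maximal inequality on ${\mathcal M}^p_{q,r}$, which is routine once the local/global splitting is in place.
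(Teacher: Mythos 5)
Your argument is correct, and the two parts compare differently with the paper. For part (1) you take essentially the paper's route: realize $f$ as a tempered distribution, dominate $\sup_{t>0}|e^{t\Delta}f|$ pointwise by $Mf$, and invoke the boundedness of $M$ on ${\mathcal M}^p_{q,r}({\mathbb R}^n)$ for $q>1$; the paper simply cites this maximal inequality (Proposition \ref{prop:190622-2}, from the author's earlier work) instead of re-deriving it by the Chiarenza--Frasca local/global splitting as you sketch. For part (2) your route is genuinely different. The paper uses the chain ${\mathcal M}^p_{q,r}({\mathbb R}^n)\hookrightarrow{\mathcal M}^p_1({\mathbb R}^n)\hookrightarrow L^1({\mathbb R}^n,w)$ with the $A_1$-weight $w=(M\chi_{[-1,1]^n})^\delta$ --- the hypothesis ``$q>1$ or $q=1\ge r$'' enters precisely in the first embedding --- then deduces $H{\mathcal M}^p_{q,r}({\mathbb R}^n)\hookrightarrow H^1({\mathbb R}^n,w)\hookrightarrow L^1({\mathbb R}^n,w)$ from the Nakai--Sawano atomic decomposition of weighted Hardy spaces, and finally concludes with the a.e.\ inequality $|f|\le\sup_{t>0}|e^{t\Delta}f|$, itself established by approximating in $L^1(w)$ by Schwartz functions and using the weighted weak $(1,1)$ bound for $M$. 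Your argument instead exploits the same hypothesis to put the majorant $g=\sup_{t>0}|e^{t\Delta}f|$ in $L^1_{\rm loc}({\mathbb R}^n)$, extracts a vaguely convergent subsequence $e^{t_k\Delta}f\,{\rm d}x\to\mu$, identifies $\mu$ with $f$ via the ${\mathcal S}'$-limit, and upgrades $\mu$ to an $L^1_{\rm loc}$ density $f_0$ with $|f_0|\le g$ by lower semicontinuity of total variation; this bypasses weighted Hardy space theory entirely, is more self-contained, and gives the same bound $\|f_0\|_{{\mathcal M}^p_{q,r}}\le\|f\|_{H{\mathcal M}^p_{q,r}}$. What the paper's detour buys is reuse: the weighted-$H^1$ machinery and the a.e.\ inequality of Lemma \ref{lem:210910-1} are needed elsewhere in the paper anyway, so nothing is lost by either choice.
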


\begin{remark}\label{rem:210427-1}
\begin{enumerate}
\item It is noteworthy that the case $q=1\ge r$ in this proposition covers a result of \cite{HMS20} as a special case of $r=1$.
\item It is not clear that $H{\mathcal M}^p_{1,r}({\mathbb R}^n)\hookrightarrow{\mathcal M}^p_{1,r}({\mathbb R}^n)$ for $r>1$.
Because the embedding $H{\mathcal M}^p_{1,r}({\mathbb R}^n)\hookrightarrow L_{\rm loc}^1({\mathbb R}^n)$ fails.
Let $n=1$.
Here, we explain that $L^{1,r}({\mathbb R})$ does not embed into $L_{\rm loc}^1({\mathbb R})$.
In fact, if we let
\begin{equation*}
f
=
\sum_{j=2}^\infty\frac{2^j}j\chi_{[2^{-j-1},2^{-j})},
\end{equation*}
then the necessary and sufficiently condition of $f\in L^{1,r}({\mathbb R})$ is $r>1$ (see \cite[p. 56]{BSNS17}).
Meanwhile, for any $k\in{\mathbb N}$, we see that
\begin{equation*}
\|f\chi_{[0,2^{-k})}\|_{L^1}=\infty.
\end{equation*}
\end{enumerate}
\end{remark}

Hardy-Morrey-Lorentz spaces admit a characterization
by using the grand maximal operator.
To formulate the result,
we recall the following two fundamental notions.
\begin{enumerate}
\item Topologize ${\mathcal S}({\mathbb R}^n)$ by norms $\{p_N\}_{N \in {\mathbb N}}$
	given by
\begin{equation*}%\label{eq:Schwartz norm}
p_N(\varphi)
\equiv
\sum_{|\alpha|\le N}\sup_{x\in{\mathbb R}^n}(1+|x|)^N|\partial^{\alpha}\varphi(x)|
\end{equation*}
for each $N \in {\mathbb N}$.
Define ${\mathcal F}_N\equiv\{\varphi\in{\mathcal S}({\mathbb R}^n):p_N(\varphi)\le 1\}$.
\item Let $f \in {\mathcal S}'({\mathbb R}^n)$.
The grand maximal operator ${\mathcal M}f$
is given by
\begin{equation*}%\label{eq:grand maximal operator}
{\mathcal M}f(x)
\equiv
\sup
\{|t^{-n}\psi(t^{-1}\cdot)\ast f(x)|
:
\text{$t>0$, $\psi\in{\mathcal F}_N$}\},
\quad x \in {\mathbb R}^n,
\end{equation*}
where we choose and fix a large integer $N$.
\end{enumerate}

The Hardy-Morrey-Lorentz quasi-norm $\|\cdot\|_{H{\mathcal M}^p_{q,r}}$ is rewritten as follows.

\begin{proposition}\label{prop:200308-2}
Let $0<q\le p<\infty$ and $0<r\le\infty$.
Then
\begin{equation*}%\label{eq:200308-5}
\|{\mathcal M}f\|_{{\mathcal M}^p_{q,r}}
\sim
\|f\|_{H{\mathcal M}^p_{q,r}}
\end{equation*}
for all $f \in {\mathcal S}'({\mathbb R}^n)$.
\end{proposition}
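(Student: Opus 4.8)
The plan is to prove the two inequalities separately, the bound $\|f\|_{H{\mathcal M}^p_{q,r}}\lesssim\|{\mathcal M}f\|_{{\mathcal M}^p_{q,r}}$ being the easy one. Writing $\phi(x)\equiv(4\pi)^{-n/2}\exp(-|x|^2/4)$, one has $e^{t\Delta}f=\tau^{-n}\phi(\tau^{-1}\cdot)\ast f$ with $\tau=\sqrt t$, and since $\phi\in{\mathcal S}({\mathbb R}^n)$ the normalized function $\phi/p_N(\phi)$ belongs to ${\mathcal F}_N$; hence $\sup_{t>0}|e^{t\Delta}f(x)|\le p_N(\phi)\,{\mathcal M}f(x)$ pointwise. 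I would then conclude using the lattice monotonicity of $\|\cdot\|_{{\mathcal M}^p_{q,r}}$, which follows since $|g|\le|h|$ forces $\lambda_g\le\lambda_h$, hence $g^{\ast}\le h^{\ast}$, hence $\|g\|_{{\mathcal M}^p_{q,r}}\le\|h\|_{{\mathcal M}^p_{q,r}}$.

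For the reverse inequality I would first dispose of the trivial case $\|f\|_{H{\mathcal M}^p_{q,r}}=\infty$, and otherwise set $u\equiv\sup_{t>0}|e^{t\Delta}f|\in{\mathcal M}^p_{q,r}({\mathbb R}^n)$, which is then finite a.e. and satisfies $u\in L^{q,r}_{\rm loc}({\mathbb R}^n)\subset L^{q_0}_{\rm loc}({\mathbb R}^n)$ for every $q_0<q$ (the first inclusion from $\|u\chi_Q\|_{L^{q,r}}\le|Q|^{1/q-1/p}\|u\|_{{\mathcal M}^p_{q,r}}$). Since the integer $N$ in the definition of ${\mathcal M}$ is at our disposal, I would fix it so large that $N>n/q$ and pick $q_0\in(n/N,q)$. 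Observing that $u=\sup_{\tau>0}|\tau^{-n}\phi(\tau^{-1}\cdot)\ast f|$ is exactly the radial maximal function of $f$ relative to the Gaussian $\phi$, which has $\int_{{\mathbb R}^n}\phi\ne0$, the next step is the classical Fefferman–Stein chain: decomposing an arbitrary $\psi\in{\mathcal F}_N$ against the approximate identity $\{\tau^{-n}\phi(\tau^{-1}\cdot)\}_{\tau>0}$ (a sum $\psi=\sum_k(\eta^{(k)})_{\tau_k}\ast\phi_{\tau_k}$ with fast-decaying $\eta^{(k)}$) yields ${\mathcal M}f(x)\lesssim M^{\ast\ast}_{\phi,N}f(x)$, where $M^{\ast\ast}_{\phi,N}f(x)\equiv\sup_{\tau>0}\sup_{y}(1+\tau^{-1}|x-y|)^{-N}|\tau^{-n}\phi(\tau^{-1}\cdot)\ast f(y)|$ is the tangential maximal function, and the tangential-to-radial estimate (available because $q_0>n/N$ and $u\in L^{q_0}_{\rm loc}$) gives $M^{\ast\ast}_{\phi,N}f(x)\lesssim(M[u^{q_0}](x))^{1/q_0}$, with $M$ the Hardy–Littlewood maximal operator. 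The outcome is the pointwise bound ${\mathcal M}f(x)\lesssim(M[u^{q_0}](x))^{1/q_0}$ for a.e. $x$.

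To turn this into the norm estimate I would use the homogeneity identity $\bigl\||g|^{q_0}\bigr\|_{{\mathcal M}^{p/q_0}_{q/q_0,r/q_0}}=\|g\|_{{\mathcal M}^p_{q,r}}^{q_0}$, valid for every $0<q_0\le q$, which follows from $(|g|^{q_0})^{\ast}=(g^{\ast})^{q_0}$, a change of variables in the Lorentz integral, and $|Q|^{q_0/p-q_0/q}=|Q|^{1/(p/q_0)-1/(q/q_0)}$; together with the boundedness of $M$ on ${\mathcal M}^{p/q_0}_{q/q_0,r/q_0}({\mathbb R}^n)$, which holds since $1<q/q_0\le p/q_0<\infty$ (this is the only place $q_0<q$ is used), the Lorentz index $r/q_0$ being immaterial. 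Chaining these gives
\begin{align*}
\|{\mathcal M}f\|_{{\mathcal M}^p_{q,r}}
&\lesssim
\bigl\|(M[u^{q_0}])^{1/q_0}\bigr\|_{{\mathcal M}^p_{q,r}}
=
\bigl\|M[u^{q_0}]\bigr\|_{{\mathcal M}^{p/q_0}_{q/q_0,r/q_0}}^{1/q_0}\\
&\lesssim
\bigl\|u^{q_0}\bigr\|_{{\mathcal M}^{p/q_0}_{q/q_0,r/q_0}}^{1/q_0}
=
\|u\|_{{\mathcal M}^p_{q,r}}
=
\|f\|_{H{\mathcal M}^p_{q,r}},
\end{align*}
which is the desired estimate.

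The hard part will be the pointwise Fefferman–Stein estimate ${\mathcal M}f\lesssim M^{\ast\ast}_{\phi,N}f\lesssim(M[u^{q_0}])^{1/q_0}$: one must justify the decomposition of a general $\psi\in{\mathcal F}_N$ against the Gaussian approximate identity uniformly over ${\mathcal F}_N$, keeping track of the polynomial growth of $\phi\ast f$ for a tempered distribution $f$, and one must handle the qualitative finiteness issues in the tangential-to-radial step (typically by first working with truncated maximal functions, or by using $u\in L^{q_0}_{\rm loc}$ to guarantee the relevant integrals are finite a.e.). A secondary technical point is verifying the boundedness of $M$ on ${\mathcal M}^{p/q_0}_{q/q_0,r/q_0}$ when $r/q_0<1$, where the space is merely quasi-Banach; there one should invoke the real-interpolation form of the Hardy–Littlewood maximal inequality (interpolating $L^{a}\to L^{a}$ for two exponents straddling $q/q_0$ and localizing to the Morrey scale) rather than mere strong $(q/q_0,q/q_0)$-boundedness.
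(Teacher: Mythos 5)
Your proposal is correct and follows essentially the same route as the paper: both directions rest on the trivial pointwise bound $\sup_{t>0}|e^{t\Delta}f|\lesssim{\mathcal M}f$ and, conversely, on the Fefferman--Stein chain ${\mathcal M}f\lesssim M^{\ast\ast}_{\phi,N}f\lesssim\bigl(M[u^{q_0}]\bigr)^{1/q_0}$ (the paper's \eqref{eq:120918-6} and \eqref{eq:120918-5}, with dyadic times and $\theta$ in place of $q_0$), followed by boundedness of the powered maximal operator on ${\mathcal M}^p_{q,r}({\mathbb R}^n)$. Your final homogeneity-plus-interpolation step is exactly the content of Proposition \ref{prop:190622-2} ($\|M^{(\eta)}f\|_{{\mathcal M}^p_{q,r}}\lesssim\|f\|_{{\mathcal M}^p_{q,r}}$ for $\eta<q$, all $0<r\le\infty$), so the quasi-Banach worry about $r/q_0<1$ is already handled there.
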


In 2009, H. Jia and H. Wang \cite{JiWa09} provided the atomic decomposition for Morrey spaces.
T. Iida, Y. Sawano and H. Tanaka \cite{IST14} also gave the decomposition and its applications.
Later, many authors showed for various functional spaces; Lorentz spaces \cite{Parilov05-06,AbTo07,LYY16} Lebesgue spaces with variable exponent \cite{NaSa12,Sawano13,CrWa14} Orlicz-Morrey spaces of the third kind \cite{GHSN16}, local Morrey spaces \cite{BaSa14,GHS17}, generalized Morrey spaces \cite{AGNS17}, weak Morrey spaces \cite{Ho17}, mixed Morrey spaces \cite{NOSS20}, etc.

We organize the remaining part of the paper as follows:
In Section \ref{s2}, we introduce the necessary fundamental statements for Morrey-Lorentz spaces.
In Section \ref{s3}, we prove Propositions \ref{prop:200308-1} and \ref{prop:200308-1}.
In Section \ref{s4}, we prove Theorems \ref{thm:200308-1}, \ref{thm:200308-3} and \ref{thm:200929-1}.
In Section \ref{s5}, we prove Theorem \ref{thm:200308-4}.
In Section \ref{s6}, we give an application for our main results.

\section{Preliminaries}\label{s2}

\subsection{Fundamental properties for function spaces}

According to the paper \cite{Hunt66}, the H\"older inequality for Lorentz quasi-norms.

\begin{lemma}[{\cite[Theorem 4.5]{Hunt66}}]\label{lem:Lpq-Holder}
Assume that $0<p,p_1,p_2<\infty$ and $0<q,q_1,q_2$ $\le\infty$ satisfy
\begin{equation*}
\frac1p=\frac1{p_1}+\frac1{p_2}, \quad
\frac1q=\frac1{q_1}+\frac1{q_2}.
\end{equation*}
Then,
\begin{equation*}%\label{eq:Lpq-Holder2}
\|f\cdot g\|_{L^{p,q}}
\lesssim
\|f\|_{L^{p_1,q_1}}\|g\|_{L^{p_2,q_2}}
\end{equation*}
for all $f\in L^{p_1,q_1}({\mathbb R}^n)$ and $g\in L^{p_2,q_2}({\mathbb R}^n)$.
\end{lemma}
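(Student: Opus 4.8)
The final statement to prove is Lemma~\ref{lem:Lpq-Holder}, the Hölder inequality for Lorentz quasi-norms. Although the paper simply cites \cite[Theorem 4.5]{Hunt66}, let me sketch how I would prove it directly.

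The plan is to reduce everything to the rearrangement function and a one-dimensional inequality. First I would recall the classical submultiplicativity-type estimate for rearrangements: for measurable $f,g$ one has $(fg)^\ast(t_1+t_2)\le f^\ast(t_1)g^\ast(t_2)$ for all $t_1,t_2>0$, and in particular $(fg)^\ast(2t)\le f^\ast(t)g^\ast(t)$. This is the crucial pointwise input; it follows from the definition of the distribution function together with the set inclusion $\{|fg|>\alpha\beta\}\subset\{|f|>\alpha\}\cup\{|g|>\beta\}$. With this in hand, the case $q=\infty$ (so $q_1=q_2=\infty$) is immediate: $t^{1/p}(fg)^\ast(2t)\le (t^{1/p_1}f^\ast(t))(t^{1/p_2}g^\ast(t))\le\|f\|_{L^{p_1,\infty}}\|g\|_{L^{p_2,\infty}}$, and taking the supremum over $t$ gives the result with constant $2^{1/p}$.

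For the case $0<q<\infty$ I would write the left-hand side as
\begin{equation*}
\|fg\|_{L^{p,q}}^q
=\int_0^\infty\bigl(t^{1/p}(fg)^\ast(t)\bigr)^q\,\frac{{\rm d}t}{t}
\le 2^{q/p}\int_0^\infty\bigl((t/2)^{1/p}f^\ast(t/2)\bigr)^q\bigl((t/2)^{1/p}g^\ast(t/2)\bigr)^q\cdot\frac{(t/2)^{q/p_2-q/p_2}}{\,}\frac{{\rm d}t}{t},
\end{equation*}
after substituting $t\mapsto 2t$; more cleanly, split the exponent $1/p=1/p_1+1/p_2$ and apply Hölder's inequality in the integral over $(0,\infty)$ with respect to the measure ${\rm d}t/t$, using the conjugate exponents $q_1/q$ and $q_2/q$ (note $q/q_1+q/q_2=1$ since $1/q=1/q_1+1/q_2$). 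This yields
\begin{equation*}
\|fg\|_{L^{p,q}}^q
\lesssim
\left(\int_0^\infty\bigl(t^{1/p_1}f^\ast(t)\bigr)^{q_1}\frac{{\rm d}t}{t}\right)^{q/q_1}
\left(\int_0^\infty\bigl(t^{1/p_2}g^\ast(t)\bigr)^{q_2}\frac{{\rm d}t}{t}\right)^{q/q_2}
=\|f\|_{L^{p_1,q_1}}^q\|g\|_{L^{p_2,q_2}}^q,
\end{equation*}
which is the claim after taking $q$-th roots. One must handle the borderline subcases where one of $q_1,q_2$ equals $\infty$ (then only one Hölder factor is an integral and the other an $L^\infty$-type sup, so one pulls that sup out first and integrates the remaining factor) separately but identically in spirit.

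The main obstacle is the pointwise rearrangement inequality $(fg)^\ast(t_1+t_2)\le f^\ast(t_1)g^\ast(t_2)$: getting the constant and the shift of variable bookkeeping exactly right is where care is needed, and it is also the step where the quasi-norm (rather than norm) nature of $L^{p,q}$ for $q<1$ or $p<1$ could cause friction — but since the inequality above is proved directly via Hölder on $(0,\infty)$ and no triangle inequality for $\|\cdot\|_{L^{p,q}}$ is ever invoked, the argument goes through for the full range $0<p,q\le\infty$ with only an implicit constant depending on $p$. Since the paper only needs the stated form with an unspecified implied constant, citing \cite{Hunt66} is the economical choice, and I would do the same, but the sketch above is the self-contained route.
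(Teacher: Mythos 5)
The paper offers no proof of this lemma at all --- it is quoted directly from \cite[Theorem 4.5]{Hunt66} --- so there is nothing to compare against except the standard argument, which is exactly what you reproduce. Your sketch is correct: the pointwise input $(fg)^\ast(t_1+t_2)\le f^\ast(t_1)g^\ast(t_2)$ (O'Neil's rearrangement inequality, which does follow from $\{|fg|>\alpha\beta\}\subset\{|f|>\alpha\}\cup\{|g|>\beta\}$ by taking $\alpha>f^\ast(t_1)$, $\beta>g^\ast(t_2)$ and passing to the infimum), the substitution $t\mapsto 2t$ costing only a factor $2^{q/p}$, the splitting $t^{1/p}=t^{1/p_1}\cdot t^{1/p_2}$, and then H\"older on $\bigl((0,\infty),\tfrac{{\rm d}t}{t}\bigr)$ with the conjugate exponents $q_1/q$ and $q_2/q$ (both $\ge 1$ since $1/q=1/q_1+1/q_2$ forces $q\le q_1,q_2$), with the borderline cases $q_i=\infty$ handled by pulling out the supremum. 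You are also right that no quasi-triangle inequality for $\|\cdot\|_{L^{p,q}}$ is ever invoked, so the full range $0<p<\infty$, $0<q\le\infty$ is covered with constant $2^{1/p}$. The only blemish is your first display for the case $q<\infty$, which as written carries the full exponent $1/p$ on both factors and a spurious $(t/2)^{q/p_2-q/p_2}$ term; it is a false start that you immediately discard in favour of the correct computation, but it should be deleted rather than left standing. With that cosmetic repair the argument is a complete, self-contained proof of the cited result.
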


The embedding relations for Morrey-Lorentz spaces are given by Ragusa in \cite{Ragusa12}.

\begin{proposition}[{\cite[Theorem 3.1]{Ragusa12}}]\label{prop:Mpqr-embedding}
The following assertions hold{\rm :}
\begin{itemize}
\item[{\rm (1)}] If $0<q\le p<\infty$ and $0<r_1\le r_2\le\infty$, then
\begin{equation*}
{\mathcal M}^p_{q,r_1}({\mathbb R}^n)
\hookrightarrow {\mathcal M}^p_{q,r_2}({\mathbb R}^n).
\end{equation*}
\item[{\rm (2)}] If $0<q_2<q_1\le p<\infty$ and $0<r_1,r_2\le\infty$, then
\begin{equation*}
{\mathcal M}^p_{q_1,r_1}({\mathbb R}^n)
\hookrightarrow {\mathcal M}^p_{q_2,r_2}({\mathbb R}^n).
\end{equation*}
\end{itemize}
\end{proposition}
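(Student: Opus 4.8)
The plan is to reduce both statements to pointwise‑in‑$Q$ comparisons of the Lorentz quasi‑norms $\|f\chi_Q\|_{L^{q,r}}$, since $\|f\|_{{\mathcal M}^p_{q,r}}=\sup_{Q\in{\mathcal Q}({\mathbb R}^n)}|Q|^{1/p-1/q}\|f\chi_Q\|_{L^{q,r}}$: once the relevant Lorentz‑space inequality is uniform in $Q$, multiplying by $|Q|^{1/p-1/q}$ and taking the supremum over cubes finishes the job.

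For part (1) I would first record the nesting of Lorentz spaces in the second exponent: for $0<q\le\infty$ and $0<r_1\le r_2\le\infty$ one has $\|g\|_{L^{q,r_2}}\lesssim_{q,r_1,r_2}\|g\|_{L^{q,r_1}}$ with a constant not depending on $g$, in particular not on its support. The endpoint $r_2=\infty$ follows from the monotonicity of $g^\ast$: since $g^\ast(t)^{r_1}\int_0^t u^{r_1/q}\,\frac{{\rm d}u}{u}\le\|g\|_{L^{q,r_1}}^{r_1}$, we get $t^{1/q}g^\ast(t)\lesssim\|g\|_{L^{q,r_1}}$, i.e. $\|g\|_{L^{q,\infty}}\lesssim\|g\|_{L^{q,r_1}}$; for $r_1<r_2<\infty$ one then splits $(t^{1/q}g^\ast(t))^{r_2}=(t^{1/q}g^\ast(t))^{r_2-r_1}(t^{1/q}g^\ast(t))^{r_1}$, bounds the first factor by $\|g\|_{L^{q,\infty}}^{r_2-r_1}$ and integrates. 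Applying this to $g=f\chi_Q$ gives $|Q|^{1/p-1/q}\|f\chi_Q\|_{L^{q,r_2}}\lesssim|Q|^{1/p-1/q}\|f\chi_Q\|_{L^{q,r_1}}$, and the supremum over $Q$ yields ${\mathcal M}^p_{q,r_1}({\mathbb R}^n)\hookrightarrow{\mathcal M}^p_{q,r_2}({\mathbb R}^n)$.

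For part (2) the decisive point is that $f\chi_Q$ is supported on a set of finite measure $|Q|$, so $(f\chi_Q)^\ast(t)=0$ for $t>|Q|$. I would prove a finite‑measure embedding: if $\mu(X)<\infty$, $0<p_2<p_1\le\infty$ and $0<s_1,s_2\le\infty$, then $\|g\|_{L^{p_2,s_2}(X)}\lesssim\mu(X)^{1/p_2-1/p_1}\|g\|_{L^{p_1,s_1}(X)}$. By part (1) (with $r_2=\infty$) it suffices to treat $s_1=\infty$, and then from $g^\ast(t)\le\|g\|_{L^{p_1,\infty}}\,t^{-1/p_1}$ one computes
\begin{equation*}
\|g\|_{L^{p_2,s_2}}^{s_2}
\le\|g\|_{L^{p_1,\infty}}^{s_2}\int_0^{\mu(X)}t^{s_2(\frac1{p_2}-\frac1{p_1})}\,\frac{{\rm d}t}{t}
=\frac{\mu(X)^{s_2(\frac1{p_2}-\frac1{p_1})}}{s_2(\frac1{p_2}-\frac1{p_1})}\,\|g\|_{L^{p_1,\infty}}^{s_2},
\end{equation*}
the integral converging precisely because $\frac1{p_2}-\frac1{p_1}>0$ (for $s_2=\infty$ replace the integral by a supremum, with the same conclusion). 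Taking $X=Q$, $(p_1,p_2,s_1,s_2)=(q_1,q_2,r_1,r_2)$ and $g=f\chi_Q$, the gain $|Q|^{1/q_2-1/q_1}$ cancels exactly the mismatch between $|Q|^{1/p-1/q_2}$ and $|Q|^{1/p-1/q_1}$:
\begin{equation*}
|Q|^{\frac1p-\frac1{q_2}}\|f\chi_Q\|_{L^{q_2,r_2}}
\lesssim|Q|^{\frac1p-\frac1{q_2}}\,|Q|^{\frac1{q_2}-\frac1{q_1}}\|f\chi_Q\|_{L^{q_1,r_1}}
=|Q|^{\frac1p-\frac1{q_1}}\|f\chi_Q\|_{L^{q_1,r_1}}\le\|f\|_{{\mathcal M}^p_{q_1,r_1}},
\end{equation*}
so taking the supremum over $Q$ gives ${\mathcal M}^p_{q_1,r_1}({\mathbb R}^n)\hookrightarrow{\mathcal M}^p_{q_2,r_2}({\mathbb R}^n)$.

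I do not expect a genuine obstacle: the whole argument rests on elementary properties of the decreasing rearrangement, and the only thing requiring real care is the exponent bookkeeping in part (2) — one must check that the power of $|Q|$ produced by the finite‑measure embedding is exactly $|Q|^{1/q_2-1/q_1}$, so that the implied constant is independent of $Q$ — together with the routine separate handling of the endpoints $r=\infty$ and $s_2=\infty$, where integrals are replaced by suprema.
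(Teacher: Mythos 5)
Your proof is correct. The paper itself gives no argument for this proposition -- it is quoted directly from Ragusa's Theorem 3.1 -- so there is no in-paper proof to compare against; your reduction to the two standard Lorentz-space facts (nesting in the second index, and the finite-measure embedding $\|g\|_{L^{q_2,r_2}}\lesssim|Q|^{1/q_2-1/q_1}\|g\|_{L^{q_1,r_1}}$ applied to $g=f\chi_Q$, whose gain in $|Q|$ exactly cancels the change in the Morrey weight) is the standard route and is carried out with the exponents handled correctly, including the $r=\infty$ endpoints.
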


Although we cannot calculate $\|\chi_E\|_{{\mathcal M}^p_{q,r}}$ for all measurable subsets $E$, we can do so for any cube $E$.

\begin{proposition}\label{prop:Mpqr-indicator}
Let $0<q\le p<\infty$ and $0<r\le\infty$, and let $Q\in{\mathcal Q}({\mathbb R}^n)$.
Then
\begin{equation*}
\|\chi_Q\|_{{\mathcal M}^p_{q,r}}
=
\left(\frac qr\right)^{\frac1r}|Q|^{\frac1p},
\end{equation*}
where we assume that $(q/r)^{1/r}=1$ for $r=\infty$.
\end{proposition}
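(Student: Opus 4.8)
The plan is to compute $\|\chi_Q\|_{{\mathcal M}^p_{q,r}}$ directly from the definition by exploiting the homogeneity and translation-invariance of the Morrey-Lorentz quasi-norm. Since $\chi_Q$ is a nonnegative simple function supported on the cube $Q$, for any cube $R\in{\mathcal Q}({\mathbb R}^n)$ we have $\chi_Q\chi_R=\chi_{Q\cap R}$, and the only relevant quantity is the Lorentz quasi-norm of the indicator of a cube. So the first step is to observe that $\|\chi_Q\|_{{\mathcal M}^p_{q,r}}=\sup_{R\in{\mathcal Q}({\mathbb R}^n)}|R|^{1/p-1/q}\|\chi_{Q\cap R}\|_{L^{q,r}}$, and that it suffices to understand $\|\chi_E\|_{L^{q,r}}$ for a measurable set $E$ of finite measure.

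Next I would compute the rearrangement of an indicator function: for $E$ with $|E|<\infty$, the distribution function is $\lambda_{\chi_E}(\alpha)=|E|$ for $0<\alpha<1$ and $0$ for $\alpha\ge 1$, hence $(\chi_E)^\ast(\tau)=\chi_{[0,|E|)}(\tau)$. Plugging into the Lorentz quasi-norm, for $0<q,r<\infty$,
\begin{equation*}
\|\chi_E\|_{L^{q,r}}
=\left(\int_0^{|E|}\tau^{r/q}\,\frac{{\rm d}\tau}{\tau}\right)^{1/r}
=\left(\frac qr\right)^{1/r}|E|^{1/q},
\end{equation*}
while for $r=\infty$ one gets $\sup_{0<\tau<|E|}\tau^{1/q}=|E|^{1/q}$, consistent with the convention $(q/r)^{1/r}=1$. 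Applying this with $E=Q\cap R$, the inner expression becomes $(q/r)^{1/r}|R|^{1/p-1/q}|Q\cap R|^{1/q}$, so
\begin{equation*}
\|\chi_Q\|_{{\mathcal M}^p_{q,r}}
=\left(\frac qr\right)^{1/r}\sup_{R\in{\mathcal Q}({\mathbb R}^n)}|R|^{1/p-1/q}|Q\cap R|^{1/q}.
\end{equation*}

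The remaining task is the optimization $\sup_{R}|R|^{1/p-1/q}|Q\cap R|^{1/q}$. Since $1/p-1/q\le 0$, for fixed $|Q\cap R|$ it is advantageous to take $|R|$ as small as possible, i.e. $R\subseteq Q$ so that $|Q\cap R|=|R|$; then the expression equals $|R|^{1/p}$, which is maximized (over cubes contained in $Q$, or by scaling up to $R=Q$) at $|R|=|Q|$, giving value $|Q|^{1/p}$. Conversely one must check no larger cube $R\supsetneq Q$ does better: then $|Q\cap R|=|Q|$ and the expression is $|R|^{1/p-1/q}|Q|^{1/q}\le|Q|^{1/p-1/q}|Q|^{1/q}=|Q|^{1/p}$ since $|R|\ge|Q|$ and the exponent $1/p-1/q$ is nonpositive; a cube partially overlapping $Q$ is handled by replacing $R$ with the cube $R'$ of the same side length containing $Q\cap R$ more efficiently, or simply by $|R|^{1/p-1/q}|Q\cap R|^{1/q}\le|Q\cap R|^{1/p}\le|Q|^{1/p}$ using $|R|\ge|Q\cap R|$. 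Hence the supremum is exactly $|Q|^{1/p}$, attained at $R=Q$, and the claimed formula follows. The only mildly delicate point — and the one I would write out carefully — is this last monotonicity argument ruling out that an off-center or oversized cube $R$ beats $R=Q$; everything else is a direct substitution.
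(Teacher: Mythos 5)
Your proposal is correct and follows essentially the same route as the paper: reduce to the identity $\|\chi_E\|_{L^{q,r}}=(q/r)^{1/r}|E|^{1/q}$ (which the paper simply cites from Grafakos, while you derive it from the rearrangement) and then bound $|R|^{1/p-1/q}|Q\cap R|^{1/q}$ above by $|Q\cap R|^{1/p}\le|Q|^{1/p}$ using $|R|\ge|Q\cap R|$ and $1/p-1/q\le0$, with the lower bound attained at $R=Q$. The ``simply by'' inequality at the end of your optimization step is exactly the paper's argument and already handles all cases at once, so no further case analysis on oversized or off-center cubes is needed.
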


\begin{proof}
As is mentioned in \cite[Example 1.4.8]{Grafakos14-1}, for each measurable set $E$,
\[
\|\chi_E\|_{L^{q,r}}
=
\left(\frac qr\right)^{\frac1r}|E|^{\frac1q}.
\]
It follows that
\begin{equation*}
\|\chi_Q\|_{{\mathcal M}^p_{q,r}}
=
\left(\frac qr\right)^{\frac1r}
\sup_{R\in{\mathcal Q}({\mathbb R}^n)}|R|^{\frac1p-\frac1q}|Q\cap R|^{\frac1q}.
\end{equation*}
Then, combining the estimates
\begin{equation*}
\sup_{R\in{\mathcal Q}({\mathbb R}^n)}|R|^{\frac1p-\frac1q}|Q\cap R|^{\frac1q}
\le
\sup_{R\in{\mathcal Q}({\mathbb R}^n)}|Q\cap R|^{\frac1p-\frac1q}|Q\cap R|^{\frac1q}
=
|Q|^{\frac1p}
\end{equation*}
and
\begin{equation*}
\sup_{R\in{\mathcal Q}({\mathbb R}^n)}|R|^{\frac1p-\frac1q}|Q\cap R|^{\frac1q}
\ge
\sup_{R\in{\mathcal Q}({\mathbb R}^n),\,R\subset Q}|R|^{\frac1p-\frac1q}|R|^{\frac1q}
=
|Q|^{\frac1p},
\end{equation*}
we obtain the desired result.
\end{proof}

Morrey-Lorentz quasi-norms satisfy the Fatou property.

\begin{lemma}[Fatou property for Morrey-Lorentz space]\label{lem:Fatou Mpqr}
Let $0<q\le p<\infty$ and $0<r\le\infty$, and let $\{f_j\}_{j=1}^\infty\subset L^0({\mathbb R}^n)$ be a nonnegative collection such that $f=\lim_{j\to\infty}f_j$ exists a.e.
Then, we have
\begin{equation*}
\|f\|_{{\mathcal M}^p_{q,r}}
\le
\liminf_{j\to\infty}\|f_j\|_{{\mathcal M}^p_{q,r}}.
\end{equation*}
\end{lemma}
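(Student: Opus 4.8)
The plan is to reduce the statement to the corresponding Fatou property for the Lorentz quasi-norm $\|\cdot\|_{L^{q,r}}$ and then to establish the latter by a double application of the classical Fatou lemma. By the very definition of $\|\cdot\|_{{\mathcal M}^p_{q,r}}$ as a supremum over cubes, it suffices to show that for each fixed $Q\in{\mathcal Q}({\mathbb R}^n)$ one has $\|f\chi_Q\|_{L^{q,r}}\le\liminf_{j\to\infty}\|f_j\chi_Q\|_{L^{q,r}}$: once this is known, since $\frac1p-\frac1q\le0$ we get $|Q|^{\frac1p-\frac1q}\|f\chi_Q\|_{L^{q,r}}\le\liminf_j|Q|^{\frac1p-\frac1q}\|f_j\chi_Q\|_{L^{q,r}}\le\liminf_j\|f_j\|_{{\mathcal M}^p_{q,r}}$, and taking the supremum over $Q$ concludes. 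Since $f_j\chi_Q\to f\chi_Q$ a.e.\ and all functions involved are nonnegative, everything reduces to the claim: \emph{if $0\le g_j\to g$ a.e., then $\|g\|_{L^{q,r}}\le\liminf_{j\to\infty}\|g_j\|_{L^{q,r}}$.}

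To prove this claim, I would first record the lower semicontinuity of the distribution function along the sequence: for every $\alpha>0$ one has $\chi_{\{g>\alpha\}}\le\liminf_{j\to\infty}\chi_{\{g_j>\alpha\}}$ a.e.\ (at a point of a.e.\ convergence with $g(x)>\alpha$ one has $g_j(x)>\alpha$ for all large $j$, while at other points the left-hand side vanishes), whence the classical Fatou lemma gives $\lambda_g(\alpha)\le\liminf_{j\to\infty}\lambda_{g_j}(\alpha)$. Then, using the definition $g^\ast(t)=\inf\{\alpha>0:\lambda_g(\alpha)\le t\}$ together with the monotonicity and right-continuity of $\lambda_g$, one deduces $g^\ast(t)\le\liminf_{j\to\infty}g_j^\ast(t)$ for every $t>0$: pass to a subsequence $\{j_k\}$ realizing the $\liminf$, set $\beta=\liminf_j g_j^\ast(t)$, and note that $g_{j_k}^\ast(t)<\beta+\varepsilon$ for large $k$ forces $\lambda_{g_{j_k}}(\beta+\varepsilon)\le t$; combining this with the previous displayed inequality yields $\lambda_g(\beta+\varepsilon)\le t$, hence $g^\ast(t)\le\beta+\varepsilon$, and $\varepsilon\downarrow0$ gives the assertion.

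Finally, for $r<\infty$ I would write $\|g\|_{L^{q,r}}^{r}=\int_0^\infty t^{\frac rq-1}g^\ast(t)^{r}\,{\rm d}t$ and apply the classical Fatou lemma a second time, using $g^\ast(t)^{r}\le\liminf_{j\to\infty}g_j^\ast(t)^{r}$, to obtain $\|g\|_{L^{q,r}}^{r}\le\liminf_{j\to\infty}\|g_j\|_{L^{q,r}}^{r}$; for $r=\infty$ one instead observes $t^{\frac1q}g^\ast(t)\le\liminf_{j\to\infty}t^{\frac1q}g_j^\ast(t)\le\liminf_{j\to\infty}\|g_j\|_{L^{q,\infty}}$ and takes the supremum over $t>0$. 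I do not anticipate any genuine obstacle here: the argument is a routine double use of Fatou's lemma, and the only points requiring a little care are the transfer of lower semicontinuity from the distribution functions to the decreasing rearrangements and the separate (but trivial) treatment of the cases $r<\infty$ and $r=\infty$.
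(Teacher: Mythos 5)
Your proof is correct and follows the same route as the paper: reduce to the Fatou property for $\|\cdot\|_{L^{q,r}}$ on each fixed cube $Q$ and then interchange $\sup_Q$ with $\liminf_j$. The only difference is that the paper simply cites the Lorentz-space Fatou property (Grafakos, Exercise 1.4.11), whereas you supply its proof via lower semicontinuity of the distribution function and of the rearrangement; that argument is sound (the appeal to right-continuity of $\lambda_g$ is not actually needed, since your $\varepsilon$-argument already handles it).
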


\begin{proof}
For each $Q\in{\mathcal Q}({\mathbb R}^n)$,
\begin{equation*}
f_j\chi_Q\to f\chi_Q
\quad \text{a.e.},
\end{equation*}
and therefore, by the Fatou property for the Lorentz quasi-norm $\|\cdot\|_{L^{q,r}}$ (see \cite[Exercise 1.4.11]{Grafakos14-1}),
\begin{equation*}
\|f\chi_Q\|_{L^{q,r}}
\le
\liminf_{j\to\infty}\|f_j\chi_Q\|_{L^{q,r}}.
\end{equation*}
Consequently,
\begin{align*}
\|f\|_{{\mathcal M}^p_{q,r}}
&\le
\sup_{Q\in{\mathcal Q}({\mathbb R}^n)}
|Q|^{\frac1p-\frac1q}\liminf_{j\to\infty}\|f_j\chi_Q\|_{L^{q,r}}
\le
\liminf_{j\to\infty}
\sup_{Q\in{\mathcal Q}({\mathbb R}^n)}
|Q|^{\frac1p-\frac1q}\|f_j\chi_Q\|_{L^{q,r}}\\
&=
\liminf_{j\to\infty}\|f_j\|_{{\mathcal M}^p_{q,r}}.
\end{align*}
\end{proof}

\subsection{Maximal operators}%\label{ss2.1}

Let $0<\eta<\infty$ and $0<\theta\le\infty$.
For a measurable function $f$ defined on ${\mathbb R}^n$, define a function $M^{(\eta,\theta)}f$ by
\begin{equation*}%\label{eq:Lorentz maximal operator}
M^{(\eta,\theta)}f(x)
\equiv
\sup_{Q\in{\mathcal Q}({\mathbb R}^n)}\chi_Q(x)
\frac{\|f\chi_Q\|_{L^{\eta,\theta}}}{\|\chi_Q\|_{L^{\eta,\theta}}},
\quad x\in{\mathbb R}^n.
\end{equation*}
When $\theta=\eta$, we abbreviate $M^{(\eta,\eta)}=M^{(\eta)}$.
The operator $M^{(\eta,\theta)}$ is bounded on the Lorentz space $L^{p,q}({\mathbb R}^n)$.

\begin{proposition}{\cite[Proposition 2]{Hatano20-2}}\label{prop:190620-1}
Let $0<p,q\le\infty$, $0<\eta<\infty$ and $0<\theta\le\infty$.
If $\eta<p$, then the operator $M^{(\eta,\theta)}$ is a bounded operator from $L^{p,q}({\mathbb R}^n)$ to itself.
\end{proposition}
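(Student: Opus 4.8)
The plan is to control $M^{(\eta,\theta)}$ by a fractional power of the Hardy--Littlewood maximal operator $M$ and then to invoke the boundedness of $M$ on Lorentz spaces. From now on I assume $p<\infty$, which is the case relevant to the applications below; when $p=q=\infty$ the spaces $L^{\infty,\infty}$ coincide with $L^\infty$ and the assertion reduces to the trivial bound $\|M(|f|)\|_{L^\infty}\le\|f\|_{L^\infty}$.

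Since $\eta<p$, I would first fix an auxiliary exponent $\eta_0$ with $\eta<\eta_0<p$, and then establish the pointwise estimate
\[
M^{(\eta,\theta)}f(x)
\lesssim_{\eta,\eta_0,\theta}
\bigl(M(|f|^{\eta_0})(x)\bigr)^{1/\eta_0},
\qquad x\in{\mathbb R}^n,
\]
where $M$ is the Hardy--Littlewood maximal operator over cubes. To get this I would fix a cube $Q\in{\mathcal Q}({\mathbb R}^n)$ and estimate the two factors in $\|f\chi_Q\|_{L^{\eta,\theta}}/\|\chi_Q\|_{L^{\eta,\theta}}$ separately. For the denominator, the computation already used in the proof of Proposition \ref{prop:Mpqr-indicator} gives $\|\chi_Q\|_{L^{\eta,\theta}}\sim_{\eta,\theta}|Q|^{1/\eta}$. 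For the numerator, writing $g\equiv f\chi_Q$, whose decreasing rearrangement $g^\ast$ is supported on $[0,|Q|]$, the elementary bound $t\,g^\ast(t)^{\eta_0}\le\int_0^t g^\ast(s)^{\eta_0}\,{\rm d}s\le\|g\|_{L^{\eta_0}}^{\eta_0}$ yields $g^\ast(t)\le t^{-1/\eta_0}\|g\|_{L^{\eta_0}}$; substituting this into the definition of $\|\cdot\|_{L^{\eta,\theta}}$ and integrating over $[0,|Q|]$ --- the relevant exponent $\theta(1/\eta-1/\eta_0)-1$ exceeds $-1$, so there is no divergence at the origin --- produces
\[
\|f\chi_Q\|_{L^{\eta,\theta}}
\lesssim_{\eta,\eta_0,\theta}
|Q|^{\frac1\eta-\frac1{\eta_0}}\,\|f\chi_Q\|_{L^{\eta_0}},
\]
with the case $\theta=\infty$ handled in the same way. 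Dividing, taking the supremum over all cubes, and pulling the power $1/\eta_0$ out of the supremum by monotonicity of $s\mapsto s^{1/\eta_0}$ gives the displayed pointwise bound.

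The proof would then be finished using the identity $\|h^{1/\eta_0}\|_{L^{p,q}}=\|h\|_{L^{p/\eta_0,q/\eta_0}}^{1/\eta_0}$ (immediate from $(h^{1/\eta_0})^\ast=(h^\ast)^{1/\eta_0}$) together with the classical boundedness of $M$ on $L^{a,b}({\mathbb R}^n)$ for every $a>1$ and $0<b\le\infty$ (a consequence, via Marcinkiewicz interpolation in the Lorentz scale, of the weak $(1,1)$ and $L^\infty$ bounds of $M$). Since $\eta_0<p$ forces $p/\eta_0>1$, this applies and yields
\[
\|M^{(\eta,\theta)}f\|_{L^{p,q}}
\lesssim
\bigl\|M(|f|^{\eta_0})\bigr\|_{L^{p/\eta_0,q/\eta_0}}^{1/\eta_0}
\lesssim
\bigl\||f|^{\eta_0}\bigr\|_{L^{p/\eta_0,q/\eta_0}}^{1/\eta_0}
=
\|f\|_{L^{p,q}}.
\]

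I expect the main obstacle to be the pointwise step: one needs a bound that is uniform over all cubes $Q$ and that works simultaneously for every second index $\theta\in(0,\infty]$, including small $\theta$, where $L^{\eta,\theta}$ is strictly smaller than $L^{\eta}$. The device that makes this work is to trade the Lorentz index $\eta$ for the slightly larger Lebesgue index $\eta_0$: on the finite-measure cube $Q$ the precise value of $\theta$ becomes immaterial, while the freedom $\eta<\eta_0<p$ --- available exactly because $\eta<p$ --- is precisely what leaves enough room for the Lorentz-boundedness of $M$ in the final step.
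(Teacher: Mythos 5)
Your argument is correct: the pointwise domination $M^{(\eta,\theta)}f\lesssim_{\eta,\eta_0,\theta}\bigl(M(|f|^{\eta_0})\bigr)^{1/\eta_0}$ for any fixed $\eta<\eta_0<p$ (obtained from $\|\chi_Q\|_{L^{\eta,\theta}}\sim|Q|^{1/\eta}$ and the local embedding $\|f\chi_Q\|_{L^{\eta,\theta}}\lesssim|Q|^{1/\eta-1/\eta_0}\|f\chi_Q\|_{L^{\eta_0}}$, whose convergence at $t=0$ indeed only requires $\eta<\eta_0$), combined with the dilation identity $\|h^{1/\eta_0}\|_{L^{p,q}}=\|h\|_{L^{p/\eta_0,q/\eta_0}}^{1/\eta_0}$ and the Lorentz-boundedness of $M$ for first index $p/\eta_0>1$, is a complete proof. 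The paper itself does not prove this proposition but merely cites \cite{Hatano20-2}; your argument is the standard powered-maximal-function route by which such statements are established there, so there is no substantive divergence to report.
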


The critical case $\eta=p$ in this proposition, which the operator $M^{(p,q)}$ is bounded from $L^{p,q}({\mathbb R}^n)$ to ${\rm W}L^p({\mathbb R}^n)$ if and only if $0<q\le p<\infty$, is given in \cite[Theorem 6]{CHK82}.

The maximal operator $M^{(1,1)}$ is denoted by $M$, which is called the Hardy-Littlewood maximal operator.
It is well known that the boundedness of the operator $M$ holds on Morrey spaces.

\begin{proposition}[{\cite{ChFr87}}]\label{prop:200929-1}
Let $1\le q\le p<\infty$.
Then, the following assertions hold{\rm :}
\begin{itemize}
\item[{\rm (1)}] For all $f\in{\mathcal M}^p_1({\mathbb R}^n)$,
\begin{equation*}%\label{eq:200929-2}
\|Mf\|_{{\rm W}{\mathcal M}^p_1}
\lesssim
\|f\|_{{\mathcal M}^p_1}.
\end{equation*}
\item[{\rm (2)}] If $1<q\le p<\infty$, for all $f\in{\mathcal M}^p_q({\mathbb R}^n)$,
\begin{equation*}
\|Mf\|_{{\mathcal M}^p_q}\lesssim\|f\|_{{\mathcal M}^p_q}.
\end{equation*}
\end{itemize}
\end{proposition}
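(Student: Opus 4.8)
The plan is to reproduce the classical decomposition argument of Chiarenza and Frasca, reducing both assertions to the known boundedness of $M$ on the Lebesgue spaces $L^q({\mathbb R}^n)$ when $q>1$ and to its weak $(1,1)$ bound when $q=1$. I would fix a cube $Q\in{\mathcal Q}({\mathbb R}^n)$ and let $2Q$ denote the cube concentric with $Q$ of twice the side length, then decompose $f=f_1+f_2$ with $f_1\equiv f\chi_{2Q}$ and $f_2\equiv f\chi_{{\mathbb R}^n\setminus 2Q}$, so that $Mf\le Mf_1+Mf_2$ pointwise. It then suffices to bound, uniformly in $Q$, the quantity $|Q|^{\frac1p-\frac1q}\|(Mf_i)\chi_Q\|_{L^q}$ in case (2), and $\lambda|Q|^{\frac1p-1}|\{x\in Q:Mf_i(x)>\lambda\}|$ uniformly in $Q$ and $\lambda>0$ in case (1), for $i=1,2$; taking the relevant suprema recovers $\|Mf\|_{{\mathcal M}^p_q}$ and $\|Mf\|_{{\rm W}{\mathcal M}^p_1}$, respectively.

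For the local part $f_1$ I would argue as follows. In case (2), since $q>1$, $M$ is bounded on $L^q({\mathbb R}^n)$, so $\|(Mf_1)\chi_Q\|_{L^q}\le\|Mf_1\|_{L^q}\lesssim\|f\chi_{2Q}\|_{L^q}$; multiplying by $|Q|^{\frac1p-\frac1q}\sim|2Q|^{\frac1p-\frac1q}$ and invoking the definition of $\|\cdot\|_{{\mathcal M}^p_q}$ with the test cube $2Q$ controls this term by $\|f\|_{{\mathcal M}^p_q}$. In case (1) I would use instead the weak $(1,1)$ inequality $|\{Mf_1>\lambda\}|\lesssim\lambda^{-1}\|f\chi_{2Q}\|_{L^1}$, which gives $\lambda|Q|^{\frac1p-1}|\{x\in Q:Mf_1(x)>\lambda\}|\lesssim|2Q|^{\frac1p-1}\|f\chi_{2Q}\|_{L^1}\le\|f\|_{{\mathcal M}^p_1}$.

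For the far part $f_2$ the goal is the pointwise bound $Mf_2(x)\lesssim|Q|^{-\frac1p}\|f\|_{{\mathcal M}^p_q}$ for every $x\in Q$. The key geometric observation is that if $x\in Q$ and $R$ is a cube with $x\in R$ and $R\cap({\mathbb R}^n\setminus 2Q)\neq\emptyset$, then $\mathrm{diam}(R)\ge\tfrac12\,\ell(Q)$, so $|R|\gtrsim_n|Q|$. For such a cube, Hölder's inequality together with the definition of the Morrey norm gives
\[
\frac1{|R|}\int_R|f|
\le|R|^{-\frac1q}\|f\chi_R\|_{L^q}
\le|R|^{-\frac1p}\|f\|_{{\mathcal M}^p_q}
\lesssim|Q|^{-\frac1p}\|f\|_{{\mathcal M}^p_q},
\]
and passing to the supremum over all admissible $R$ yields the claim; the computation is identical for $q=1$. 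In case (2), this makes $\|(Mf_2)\chi_Q\|_{L^q}\lesssim|Q|^{-\frac1p}\|f\|_{{\mathcal M}^p_q}\,|Q|^{\frac1q}$, and multiplying by $|Q|^{\frac1p-\frac1q}$ closes the term. In case (1), I would split according to whether $\lambda>C|Q|^{-\frac1p}\|f\|_{{\mathcal M}^p_1}$: if so, then $\{x\in Q:Mf_2(x)>\lambda\}=\emptyset$; otherwise $\lambda|Q|^{\frac1p-1}|\{x\in Q:Mf_2(x)>\lambda\}|\le\lambda|Q|^{\frac1p}\lesssim\|f\|_{{\mathcal M}^p_1}$.

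Summing the two contributions and taking suprema over $Q$ (and over $\lambda$ in case (1)) finishes the proof. The one step requiring genuine care is the far-part estimate: one has to notice that the Morrey scaling exponent $\frac1p-\frac1q$ is exactly the exponent compensated by the lower bound $|R|\gtrsim|Q|$. Everything else is a direct application of the classical Lebesgue-space theory of $M$, so no serious obstacle arises; the statement itself is, of course, the theorem of Chiarenza and Frasca.
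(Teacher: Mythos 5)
Your argument is correct and complete; it is precisely the classical local/far decomposition of Chiarenza and Frasca, and the paper itself states this proposition as a citation to \cite{ChFr87} without giving a proof, so there is nothing in the text to compare against. The only cosmetic point is that for the weak-type assertion one should pass from $Mf\le Mf_1+Mf_2$ to the inclusion $\{Mf>\lambda\}\subset\{Mf_1>\lambda/2\}\cup\{Mf_2>\lambda/2\}$ before estimating each piece, which affects only the implied constant.
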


As an extension of Proposition \ref{prop:200929-1} (2), we now recall the boundedness of the operator $M$ on Morrey-Lorentz spaces.

\begin{proposition}[{\cite[Proposition 4]{Hatano20-2}}]\label{prop:190622-2}
Let $1<q\le p<\infty$ and $0<r\le\infty$.
Then we have
\begin{equation*}%\label{eq:200316-1}
\|Mf\|_{{\mathcal M}^p_{q,r}}\lesssim\|f\|_{{\mathcal M}^p_{q,r}}
\end{equation*}
for any measurable function $f$ defined on ${\mathbb R}^n$.
More generally, if $0<\eta<q\le p<\infty$, then
\begin{equation*}%\label{eq:200316-2}
\|M^{(\eta)}f\|_{{\mathcal M}^p_{q,r}}\lesssim\|f\|_{{\mathcal M}^p_{q,r}}
\end{equation*}
for any measurable function $f$ defined on ${\mathbb R}^n$.
\end{proposition}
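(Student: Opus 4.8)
The first assertion is the special case $\eta=1$ of the second (legitimate since $q>1$ forces $1<q$), so the plan is to prove directly that $M^{(\eta)}=M^{(\eta,\eta)}$ is bounded on $\mathcal M^p_{q,r}(\mathbb R^n)$ whenever $0<\eta<q\le p<\infty$ and $0<r\le\infty$. Fix a cube $Q\in\mathcal Q(\mathbb R^n)$ of side length $\ell(Q)$ and decompose $f=f_1+f_2$ with $f_1\equiv f\chi_{3Q}$ and $f_2\equiv f\chi_{(3Q)^c}$, where $3Q$ is the cube concentric with $Q$ of side length $3\ell(Q)$. Since $L^\eta=L^{\eta,\eta}$ is a quasi-norm, $M^{(\eta)}$ is quasi-subadditive, so $M^{(\eta)}f\lesssim M^{(\eta)}f_1+M^{(\eta)}f_2$ pointwise; as $\|\cdot\|_{L^{q,r}}$ is also a quasi-norm, it suffices to bound $|Q|^{1/p-1/q}\bigl\|(M^{(\eta)}f_i)\chi_Q\bigr\|_{L^{q,r}}$ for $i=1,2$ by a constant (depending only on $n,p,q,r,\eta$) times $\|f\|_{\mathcal M^p_{q,r}}$, uniformly in $Q$, and then take the supremum over $Q$.

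For $f_1$ I would appeal to Proposition~\ref{prop:190620-1} applied to the Lorentz space $L^{q,r}(\mathbb R^n)$: since $\eta<q$, the operator $M^{(\eta)}$ is bounded on $L^{q,r}(\mathbb R^n)$, whence $\bigl\|(M^{(\eta)}f_1)\chi_Q\bigr\|_{L^{q,r}}\le\|M^{(\eta)}f_1\|_{L^{q,r}}\lesssim\|f\chi_{3Q}\|_{L^{q,r}}$. Because $|Q|^{1/p-1/q}=3^{\,n(1/q-1/p)}|3Q|^{1/p-1/q}$ and $|3Q|^{1/p-1/q}\|f\chi_{3Q}\|_{L^{q,r}}\le\|f\|_{\mathcal M^p_{q,r}}$ by the definition of the Morrey--Lorentz norm, this case is finished.

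The substance lies in $f_2$. The geometric input is that if $x\in Q$ and $R\in\mathcal Q(\mathbb R^n)$ satisfies $x\in R$ and $R\cap(3Q)^c\neq\emptyset$, then $\ell(R)\ge\ell(Q)$ (a point of $Q$ and a point of $(3Q)^c$ lie at $\ell^\infty$-distance $>\ell(Q)$, and $R$ contains both), and since $R$ also meets $Q$ one gets $Q\subset 3R$. In particular a cube $R$ with $\ell(R)<\ell(Q)$ containing a point of $Q$ is disjoint from $(3Q)^c$, so $f_2\chi_R=0$; hence, using $\|\chi_R\|_{L^{\eta,\eta}}=|R|^{1/\eta}$, for $x\in Q$ only cubes with $\ell(R)\ge\ell(Q)$ contribute and $M^{(\eta)}f_2(x)\lesssim\sup\{|3R|^{-1/\eta}\|f\chi_{3R}\|_{L^\eta}:R\ni x,\ \ell(R)\ge\ell(Q)\}$. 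Each term is controlled by the Hölder inequality for Lorentz quasi-norms: bounding $\|f\chi_{3R}\|_{L^\eta}$ by a constant multiple of $\|f\chi_{3R}\|_{L^{\eta,\min(r,\eta)}}$ via Proposition~\ref{prop:Mpqr-embedding}(1), then writing $f\chi_{3R}=(f\chi_{3R})\cdot\chi_{3R}$ and applying Lemma~\ref{lem:Lpq-Holder} with $1/\eta=1/q+1/\sigma$, $\sigma\in(0,\infty)$, together with $\|\chi_{3R}\|_{L^{\sigma,\tau}}\sim|3R|^{1/\sigma}$, one obtains $\|f\chi_{3R}\|_{L^\eta}\lesssim|3R|^{1/\eta-1/q}\|f\chi_{3R}\|_{L^{q,r}}$. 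Therefore $|3R|^{-1/\eta}\|f\chi_{3R}\|_{L^\eta}\lesssim|3R|^{-1/q}\|f\chi_{3R}\|_{L^{q,r}}=|3R|^{-1/p}\bigl(|3R|^{1/p-1/q}\|f\chi_{3R}\|_{L^{q,r}}\bigr)\le|3R|^{-1/p}\|f\|_{\mathcal M^p_{q,r}}\lesssim|Q|^{-1/p}\|f\|_{\mathcal M^p_{q,r}}$, the last step because $|3R|\ge|R|\ge|Q|$. Thus $M^{(\eta)}f_2$ is dominated on $Q$ by a constant $c_Q\lesssim|Q|^{-1/p}\|f\|_{\mathcal M^p_{q,r}}$, and invoking $\|\chi_Q\|_{L^{q,r}}=(q/r)^{1/r}|Q|^{1/q}$ gives $|Q|^{1/p-1/q}\bigl\|(M^{(\eta)}f_2)\chi_Q\bigr\|_{L^{q,r}}\lesssim|Q|^{1/p-1/q}c_Q|Q|^{1/q}\lesssim\|f\|_{\mathcal M^p_{q,r}}$, which completes the proof once one takes the supremum over $Q$.

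I expect the $f_2$-estimate to be the main obstacle, and within it two points deserve attention: making the geometric engulfing precise --- so that every cube contributing to $M^{(\eta)}f_2$ on $Q$ is comparable in size to $Q$ and contains it, which is exactly what converts a supremum over cubes into a genuine constant bound on $Q$ and produces the decisive factor $|Q|^{-1/p}$ --- and running the Lorentz Hölder inequality when the second exponent $r$ is small, where $L^{q,r}$ is merely a quasi-norm and the naive conjugate second index would be nonpositive; this is circumvented by the preliminary embedding step from Proposition~\ref{prop:Mpqr-embedding}(1). The remaining points (measurability and a.e.\ finiteness of $M^{(\eta)}f$, and the quasi-triangle bookkeeping for $\|\cdot\|_{L^{q,r}}$) are routine.
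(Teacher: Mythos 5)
The paper does not prove this proposition; it is imported verbatim from \cite[Proposition 4]{Hatano20-2}, so there is no in-text argument to compare yours against. Your proof is correct and is the standard one for maximal-function bounds on Morrey-type scales: the local/global splitting $f=f\chi_{3Q}+f\chi_{(3Q)^c}$, with the local part handled by the $L^{q,r}$-boundedness of $M^{(\eta)}$ from Proposition \ref{prop:190620-1} (valid since $\eta<q$) and the tail handled by the engulfing observation $\ell(R)\ge\ell(Q)$ together with the Lorentz H\"older inequality, which yields the pointwise bound $|Q|^{-1/p}\|f\|_{{\mathcal M}^p_{q,r}}$ on $Q$; your care with the second Lorentz index (passing to $L^{\eta,\min(r,\eta)}$ before applying H\"older) correctly handles the only delicate exponent bookkeeping. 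I see no gap.
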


We recall the Fefferman-Stein inequality for Morrey-Lorentz quasi-norm.
The case $r=\infty$ in Proposition \ref{prop:200324-1}, below, is not contained in \cite[Theorem 10 (1)]{Hatano20-2}, but, using Proposition 2.6 in \cite{Ho17}, we can check this case, similarly.

\begin{proposition}\label{prop:200324-1}
\begin{itemize}
\item[(1)] Let $1<q\le p<\infty$, $1<r\le \infty$ and $1<u<\infty$.
Then
\begin{equation*}%\label{eq:200324-1.1}
\left\|\left(
\sum_{j=1}^\infty (Mf_j)^u
\right)^{\frac1u}\right\|_{{\mathcal M}^p_{q,r}}
\lesssim
\left\|\left(
\sum_{j=1}^\infty |f_j|^u
\right)^{\frac1u}\right\|_{{\mathcal M}^p_{q,r}}
\end{equation*}
for all sequences of measurable functions $\{f_j\}_{j=1}^\infty$.
\item[(2)] Let $1<q\le p<\infty$ and $0<r\le\infty$.
Then
\begin{equation*}%\label{eq:200324-1.2}
\left\|\sup_{j\in{\mathbb N}}Mf_j\right\|_{{\mathcal M}^p_{q,r}}
\lesssim
\left\|\sup_{j\in{\mathbb N}}|f_j|\right\|_{{\mathcal M}^p_{q,r}}.
\end{equation*}
\end{itemize}
\end{proposition}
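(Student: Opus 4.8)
\textbf{Proof proposal for Proposition \ref{prop:200324-1}.}

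The plan is to reduce both inequalities to the vector-valued and $\ell^\infty$-valued Fefferman--Stein inequalities on the Lorentz space $L^{q,r}(\mathbb R^n)$, and then run the standard Morrey localization argument. First I would recall that the scalar Fefferman--Stein (or, for (2), the pointwise-supremum) estimate on $L^{q,r}$ is already available: the case $r<\infty$ is the content of \cite[Theorem 10 (1)]{Hatano20-2} (applied with the trivial Morrey exponent, i.e. $p=q$), and the endpoint $r=\infty$ follows from Ho's vector-valued maximal inequality on weak Lebesgue spaces \cite[Proposition 2.6]{Ho17} exactly as indicated in the paragraph preceding the proposition. So the genuinely new content here is only the passage from the Lorentz estimate to the Morrey--Lorentz estimate, and this is where a localization trick is needed because $M$ is not local.

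For part (1), fix a cube $Q\in\mathcal Q(\mathbb R^n)$ and split each $f_j=f_j\chi_{2Q}+f_j\chi_{(2Q)^c}=:g_j+h_j$. For the local piece I would use the scalar/vector Fefferman--Stein inequality on $L^{q,r}$ together with $\|g_j\chi_Q\|\le\|g_j\|$ and the boundedness of $M^{(\eta)}$-type operators, to bound $\big\|(\sum_j(Mg_j)^u)^{1/u}\chi_Q\big\|_{L^{q,r}}$ by $\big\|(\sum_j|f_j|^u)^{1/u}\chi_{2Q}\big\|_{L^{q,r}}$, which after multiplying by $|Q|^{1/p-1/q}\sim|2Q|^{1/p-1/q}$ is controlled by the Morrey--Lorentz norm on the right-hand side. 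For the far piece, one exploits that for $x\in Q$ and any cube $R\ni x$ contributing to $Mh_j(x)$, one has $\ell(R)\gtrsim\ell(Q)$, so $Mh_j(x)\lesssim M_{\ge\ell(Q)}|f_j|(x)$, and then a standard dyadic annular decomposition $\{2^{k+1}Q\setminus 2^kQ\}_{k\ge1}$ gives
\[
\Big(\sum_j (Mh_j(x))^u\Big)^{1/u}
\lesssim
\sum_{k=1}^\infty |2^kQ|^{-1}\Big\|\Big(\sum_j|f_j|^u\Big)^{1/u}\chi_{2^{k+1}Q}\Big\|_{L^1}\cdot\chi_Q(x).
\]
Using $\|\cdot\chi_{2^{k+1}Q}\|_{L^1}\lesssim |2^kQ|^{1-1/q'}\|\cdot\chi_{2^{k+1}Q}\|_{L^{q,r}}$ (a Hölder estimate, Lemma \ref{lem:Lpq-Holder}, or Proposition \ref{prop:Mpqr-indicator}) and then the Morrey--Lorentz bound $\|\cdot\chi_{2^{k+1}Q}\|_{L^{q,r}}\lesssim |2^{k+1}Q|^{1/q-1/p}\|(\sum_j|f_j|^u)^{1/u}\|_{\mathcal M^p_{q,r}}$, the $k$-sum converges geometrically since $p<\infty$ forces the exponent of $|2^kQ|$ to be negative. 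Multiplying through by $|Q|^{1/p-1/q}$ and using $\|\chi_Q\|_{L^{q,r}}\sim|Q|^{1/q}$ closes part (1) after taking the supremum over $Q$.

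For part (2) the argument is the same but simpler: there is no $\ell^u$ sum to control, $\sup_j|g_j|\le\sup_j|f_j|\in L^{q,r}_{\rm loc}$, the local piece is handled by the (scalar) boundedness of $M$ composed with the trivial bound $M(\sup_j|g_j|)\ge\sup_j Mg_j$, and for the far piece the same annular estimate applies verbatim with $u$ replaced by $\infty$. The main obstacle I anticipate is purely bookkeeping: making the far-piece annular sum converge requires the Hölder exponents to match up, and one must be slightly careful when $q=1$ (but part (2) only requires $1<q$, and part (1) only requires $1<q,r,u$, so the relevant Hölder conjugate exponents are finite and everything is legitimate). A secondary subtlety is that the constants from \cite[Theorem 10 (1)]{Hatano20-2} and \cite[Proposition 2.6]{Ho17} are stated for $L^{q,r}$ rather than $\mathcal M^q_{q,r}$, so one should verify those two results apply in the degenerate Morrey case $p=q$; this is immediate from the definitions.
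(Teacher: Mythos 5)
Your argument is essentially correct, but it is worth noting that the paper does not actually prove this proposition at all: for $1<r<\infty$ it is quoted verbatim from \cite[Theorem 10 (1)]{Hatano20-2} (which is already stated for Morrey--Lorentz norms, not merely for $L^{q,r}$ as you assume), and the case $r=\infty$ is claimed to follow by repeating that argument with \cite[Proposition 2.6]{Ho17} as input. So your localization scheme --- splitting $f_j=f_j\chi_{2Q}+f_j\chi_{(2Q)^c}$, handling the local piece by the $L^{q,r}(\ell^u)$ Fefferman--Stein inequality, and handling the far piece by the dyadic annular decomposition plus H\"older and the Morrey scaling --- is a genuinely self-contained route that the paper does not supply; it buys a proof that only needs the Lorentz-space ($p=q$) input rather than the full Morrey--Lorentz result of \cite{Hatano20-2}, at the cost of having to verify that the vector-valued input on $L^{q,r}$ (including $r=\infty$) is really available. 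Two small repairs are needed. First, the H\"older step should read $\|F\chi_{2^{k+1}Q}\|_{L^1}\lesssim |2^{k}Q|^{1/q'}\|F\chi_{2^{k+1}Q}\|_{L^{q,r}}$ with exponent $1/q'=1-1/q$, not $1-1/q'$; with your written exponent the total power of $|2^kQ|$ is $-1+2/q-1/p$, which need not be negative, whereas with $1/q'$ it is exactly $-1/p<0$ and the geometric convergence you invoke is correct. Second, in part (2) the range is $0<r\le\infty$, so for $r\le 1$ the conjugate exponent $r'$ in Lemma \ref{lem:Lpq-Holder} is unavailable; you should first pass to $\|F\chi_{2^{k+1}Q}\|_{L^{q,\infty}}$ via the embedding $L^{q,r}\hookrightarrow L^{q,\infty}$ and pair it with $\|\chi_{2^{k+1}Q}\|_{L^{q',1}}$ (the issue is the small $r$, not $q$, which is indeed $>1$ in both parts). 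With these adjustments the proof closes as you describe.
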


\subsection{Lorentz-block spaces}%\label{ss2.2}

It is known the predual space of Morrey-Lorentz space in \cite[Lemma 3.1]{Ferreira16} and \cite[Theorem 3.5]{Ho14}.
Let $1<q\le p<\infty$ and $1<r\le\infty$, and the conjugate number $p'$ of $p$ is defined by $\dfrac1p+\dfrac1{p'}=1$.
Then the predual space ${\mathcal H}^{p'}_{q',r'}({\mathbb R}^n)$ of the Morrey-Lorentz space ${\mathcal M}^p_{q,r}({\mathbb R}^n)$ is given by
\begin{equation*}%\label{eq:Lorentz-block space}
{\mathcal H}^{p'}_{q',r'}({\mathbb R}^n)
=
\left\{
g=\sum_{j=1}^\infty\mu_jb_j
\,:\,
\text{
$\{\mu_j\}_{j=1}^\infty\in\ell^1({\mathbb N})$,
each $b_j$ is a $(p',q',r')$-block
}
\right\}.
\end{equation*}
Here by \lq \lq a $(p',q',r')$-block" we mean an $L^{q',r'}({\mathbb R}^n)$-function supported on a cube $Q\in{\mathcal Q}({\mathbb R}^n)$ with $L^{q',r'}({\mathbb R}^n)$-norm less than or equal to $|Q|^{\frac1{q'}-\frac1{p'}}$.
The norm of ${\mathcal H}^{p'}_{q',r'}({\mathbb R}^n)$ is defined by
\begin{equation*}%\label{eq:Lorentz-block norm}
\|g\|_{{\mathcal H}^{p'}_{q',r'}}
=
\inf\sum_{j=1}^\infty |\mu_j|,
\end{equation*}
where $\inf$ is over all admissible expressions above.
We remark that the norm equivalent
\begin{equation*}%\label{eq:200309-1}
\|f\|_{{\mathcal M}^p_{q,r}}
\sim
\sup
\left\{
\int_{{\mathbb R}^n}|f(x)g(x)|\,{\rm d}x
\,:\,
\|g\|_{{\mathcal H}^{p'}_{q',r'}}=1
\right\}
\end{equation*}
is obtained.
In particular, it is known that the predual space of the weak Morrey space ${\rm W}{\mathcal M}^p_q({\mathbb R}^n)={\mathcal M}^p_{q,\infty}({\mathbb R}^n)$ is ${\mathcal H}^{p'}_{q',1}({\mathbb R}^n)$ in \cite[Theorem 3.6]{Ho17} and \cite[Theorem 2.5]{SaEl-Sh18}.

\section{Proof of Propositions \ref{prop:200308-1} and \ref{prop:200308-2}}\label{s3}

\subsection{Proof of Proposition \ref{prop:200308-1}}%\label{ss3.1}

Let $w$ be a locally integrable function, and recall that $w$ is an $A_1$-weight when $Mw\lesssim w$.
We define the weighted $L^1$-space $L^1({\mathbb R}^n,w)$ by the space of all $f\in L^0({\mathbb R}^n)$ with finite norm
\begin{equation*}
\|f\|_{L^1(w)}
:=
\int_{{\mathbb R}^n}|f(x)|w(x)\,{\rm d}x.
\end{equation*}

To prove Proposition {\rm \ref{prop:200308-1}, we use the following lemmas and the atomic decomposition for the weighted Hardy space $H^1({\mathbb R}^n,w)$ given by Nakai and Sawano \cite{NaSa14}.

\begin{lemma}\label{lem:210910-1}
\begin{itemize}
\item[{\rm (1)}] For all $f\in{\mathcal S}({\mathbb R}^n)$,
\begin{equation}\label{eq:210919-1}
e^{t\Delta}f(x)\to f(x)
\quad \text{as} \quad
t\downarrow0.
\end{equation}
\item[{\rm (2)}] Let $w\in A_1$, and let $f\in L^1({\mathbb R}^n,w)$.
Then for each $x\in{\mathbb R}^n$ and $t>0$,
\begin{equation*}
\int_{{\mathbb R}^n}
\left|\frac1{\sqrt{(4\pi t)^n}}\exp\left(-\frac{|x-y|^2}{4t}\right)f(y)\right|
\,{\rm d}y
<\infty
\end{equation*}
holds.
Moreover, we have
\begin{equation}\label{eq:210913-2}
|f|\le\sup_{t>0}|e^{t\Delta}f|, \quad
\text{a.e.}
\end{equation}
\item[{\rm (3)}] For all $f\in{\mathcal S}'({\mathbb R}^n)$,
\[
e^{t\Delta}f\to f
\quad \text{in} \quad
{\mathcal S}'({\mathbb R}^n).
\]
\end{itemize}
\end{lemma}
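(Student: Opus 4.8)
The final statement to prove is Lemma~\ref{lem:210910-1}, a three-part collection of standard but essential facts about the heat semigroup.

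\medskip

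\textbf{Part (1).} For $f \in {\mathcal S}({\mathbb R}^n)$ the claim $e^{t\Delta}f(x) \to f(x)$ as $t\downarrow 0$ is the classical approximate-identity statement. The plan is to write $e^{t\Delta}f(x) = (g_t * f)(x)$ where $g_t(y) = (4\pi t)^{-n/2}\exp(-|y|^2/(4t))$ is the Gauss–Weierstrass kernel, note $\int_{{\mathbb R}^n} g_t = 1$ for every $t>0$, and estimate
\[
|e^{t\Delta}f(x) - f(x)| \le \int_{{\mathbb R}^n} g_t(y)\,|f(x-y) - f(x)|\,{\rm d}y.
\]
Splitting the integral into $|y|<\delta$ and $|y|\ge\delta$, the first piece is controlled by the uniform continuity of $f$ and the second by $\|f\|_\infty$ times the Gaussian tail $\int_{|y|\ge\delta} g_t(y)\,{\rm d}y = \int_{|z|\ge\delta/\sqrt t} g_1(z)\,{\rm d}z \to 0$; one may even get pointwise (indeed uniform) convergence. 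This is entirely routine.

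\medskip

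\textbf{Part (2).} Here $w\in A_1$ and $f\in L^1({\mathbb R}^n,w)$. For the finiteness of the integral, the key observation is that for fixed $x$ and $t$ the Gaussian $y \mapsto g_t(x-y)$ is bounded and has rapid decay, and the $A_1$ condition forces a lower bound on $w$ that is only polynomially small; more precisely, $w\in A_1$ implies $w\in A_\infty$, hence $w$ satisfies a reverse doubling / lower bound $w(B(x,R)) \gtrsim R^{-N}$-type estimate, so $1/w$ grows at most polynomially in an averaged sense, and the Gaussian decay of $g_t$ beats it. Concretely, I would dominate
\[
\int_{{\mathbb R}^n} g_t(x-y)\,|f(y)|\,{\rm d}y
= \int_{{\mathbb R}^n} \frac{g_t(x-y)}{w(y)}\,|f(y)|\,w(y)\,{\rm d}y
\le \Big\|\frac{g_t(x-\cdot)}{w}\Big\|_{L^\infty_{\rm loc}\text{-type bound}}\,\|f\|_{L^1(w)},
\]
using that $g_t(x-y)/w(y)$ is bounded: on a fixed ball around $x$ this follows from $w$ being bounded below on compacts (as $w^{-1}$ is locally integrable, in fact $w\in A_1 \Rightarrow w^{-1}\in A_\infty$ is too strong; instead use that $\operatorname{ess\,inf}_{B} w > 0$ on each ball via $Mw \lesssim w$ and $M(w\chi_{2B}) \ge c\,w(2B)/|2B| > 0$ a.e.\ on $B$), and off that ball the Gaussian decay dominates any polynomial lower bound on $w$ coming from $A_1 \Rightarrow$ at most polynomial growth of $w$ at infinity. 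For \eqref{eq:210913-2}, the plan is: $e^{t\Delta}f$ is well-defined pointwise a.e.\ by the above, $\sup_{t>0}|e^{t\Delta}f| \ge \limsup_{t\downarrow 0}|e^{t\Delta}f|$, and then invoke a Lebesgue-differentiation argument for the Gaussian means — since $|f|\,w$-a.e.\ is finite and $w$-a.e.\ points are Lebesgue points of $f$ relative to Lebesgue measure (here one uses again the $A_1$ structure to transfer a.e.-$w$ statements to a.e.-Lebesgue statements, since $w>0$ a.e.), the Gaussian averages $e^{t\Delta}f(x) \to f(x)$ at a.e.\ $x$, whence $\sup_{t>0}|e^{t\Delta}f|\ge |f|$ a.e.

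\medskip

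\textbf{Part (3).} For $f\in{\mathcal S}'({\mathbb R}^n)$, convergence $e^{t\Delta}f \to f$ in ${\mathcal S}'$ means $\langle e^{t\Delta}f,\varphi\rangle \to \langle f,\varphi\rangle$ for every $\varphi\in{\mathcal S}({\mathbb R}^n)$. The plan is to move the heat operator onto the test function: since $g_t$ is even, $\langle e^{t\Delta}f,\varphi\rangle = \langle f, e^{t\Delta}\varphi\rangle$, and $e^{t\Delta}\varphi \to \varphi$ in ${\mathcal S}({\mathbb R}^n)$ (i.e.\ with respect to every seminorm $p_N$) as $t\downarrow 0$ — this is a quantitative strengthening of Part (1), proved the same way with derivatives $\partial^\alpha e^{t\Delta}\varphi = e^{t\Delta}\partial^\alpha\varphi$ and the weight $(1+|x|)^N$ absorbed using the Gaussian decay; then continuity of the tempered distribution $f$ finishes it.

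\medskip

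\textbf{Main obstacle.} The genuinely non-trivial point is Part (2): justifying absolute convergence of the Gaussian integral and the a.e.\ domination \eqref{eq:210913-2} for general $w\in A_1$. The heart of the matter is extracting from the defining inequality $Mw\lesssim w$ both (i) a local lower bound $\operatorname{ess\,inf}_{K}w>0$ on compacts and (ii) an at-most-polynomial growth bound for $w$ at infinity, so that the Gaussian kernel's superpolynomial decay makes $g_t(x-\cdot)/w$ bounded. Parts (1) and (3) are standard approximate-identity arguments and I would keep their proofs brief.
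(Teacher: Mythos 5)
Your proposal is correct, and parts (1) and (3) match the paper (the paper proves (1) by dominated convergence after the substitution $y=x-2\sqrt{t}z$, which is the same routine argument as your approximate-identity splitting, and it simply cites \cite[Theorem 1.35]{Sawano18} for (3), whereas you give the standard duality argument $\langle e^{t\Delta}f,\varphi\rangle=\langle f,e^{t\Delta}\varphi\rangle$ with $e^{t\Delta}\varphi\to\varphi$ in ${\mathcal S}$). The real divergence is in part (2). The paper does not touch the finiteness claim at all and proves \eqref{eq:210913-2} by the classical density-plus-maximal-function scheme: approximate $f$ by $g\in{\mathcal S}$ with $\|f-g\|_{L^1(w)}<\varepsilon$, dominate $\sup_{t>0}|e^{t\Delta}h|\lesssim Mh$ via the radially decreasing Gaussian (\cite[Proposition 2.7]{Duoandikoetxea01}), and then bound $w(E_k)\lesssim k\varepsilon$ using the weighted weak $(1,1)$ inequality for $M$ on $L^1(w)$ (valid precisely because $w\in A_1$) together with Chebyshev; mutual absolute continuity of $w\,{\rm d}x$ and ${\rm d}x$ finishes. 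You instead go through Lebesgue points of $f$ (available since the $A_1$ lower bound $\operatorname{ess\,inf}_B w\gtrsim w(2B)/|2B|$ gives $f\in L^1_{\rm loc}({\rm d}x)$) plus a direct estimate showing $g_t(x-\cdot)/w$ is essentially bounded with superpolynomially small tail. Your route has the merit of actually proving the finiteness assertion, which the paper's proof silently assumes, and it avoids invoking the weighted maximal theorem; its cost is that the Lebesgue-point convergence of Gaussian means for a function that is merely in $L^1(w)$ requires you to control the far-field part $\int_{|x-y|\ge\delta}g_t(x-y)|f(y)|\,{\rm d}y$ explicitly — but the kernel/weight bound you set up for finiteness (namely $w(y)\gtrsim(1+|x-y|)^{-n}$ a.e.\ from $Mw\lesssim w$, so that $\sup_{|x-y|\ge\delta}g_t(x-y)/w(y)\to0$ as $t\downarrow0$) does exactly this job, so the argument closes. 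One small phrasing point: what you need at infinity is the polynomial \emph{lower} bound on $w$ just described, not an upper growth bound on $w$; your text briefly conflates the two, though the substance of your estimate is the correct one.
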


\begin{proof}

(1) Because
\[
|\exp(-|z|^2)f(x-2\sqrt{t}z)|
\le
\exp(-|z|^2)\|f\|_{L^\infty}\in L^1({\mathbb R}^n),
\]
by the Lebesgue convergence theorem, we have
\begin{align*}
e^{t\Delta}f(x)
=
\frac1{\sqrt{\pi^n}}
\int_{{\mathbb R}^n}\exp(-|z|^2)f(x-2\sqrt{t}z)\,{\rm d}z
\to
\frac1{\sqrt{\pi^n}}
\int_{{\mathbb R}^n}\exp(-|z|^2)f(x)\,{\rm d}z
=
f(x)
\end{align*}
as $t\downarrow0$.
This proves \eqref{eq:210919-1}.\\

(2) To prove \eqref{eq:210913-2}, it suffices to show that the set
\begin{equation*}
E_k\equiv
\left\{
x\in{\mathbb R}^n
\,:\,
\underset{t\downarrow0}{\rm limsup}|e^{t\Delta}f(x)-f(x)|>\frac1k
\right\}
\end{equation*}
is a null set for all $k\in{\mathbb N}$ and $f\in L^1({\mathbb R}^n,w)$.
Fix $\varepsilon>0$, and take $g\in{\mathcal S}({\mathbb R}^n)$ such that
\[
\|f-g\|_{L^1(w)}<\varepsilon.
\]
Because the function
\[
(0,\infty)\ni\lambda
\longmapsto
\varphi(\lambda)\equiv\exp(-\lambda)\in(0,\infty)
\]
is positive and decreasing on $(0,\infty)$, we deduce from \cite[Proposition 2.7]{Duoandikoetxea01} that
\begin{equation*}
\sup_{t>0}|e^{t\Delta}f|
=
\sup_{t>0}
\left|\frac1{\sqrt{(4\pi t)^n}}\varphi\left(-\frac{|\cdot|^2}{4t}\right)\ast f\right|
\lesssim
\frac1{\sqrt{\pi^n}}\|\varphi(|\cdot|^2)\|_{L^1}Mf.
\end{equation*}
Thus, by \eqref{eq:210919-1}, we estimate
\begin{align*}
w(E_k)
&\le
w
\left(
\left\{\underset{t\downarrow0}{\rm limsup}|e^{t\Delta}[f-g]|>\frac1{2k}\right\}
\right)
+
w\left(\left\{|f-g|>\frac1{2k}\right\}\right)\\
&\lesssim
w\left(\left\{M[f-g]>\frac1{2k}\right\}\right)
+
w\left(\left\{|f-g|>\frac1{2k}\right\}\right).
\end{align*}
Applying the weak-type boundedness of $M$ on $L^1({\mathbb R}^n,w)$ (see, e.g., \cite[Theorem 7.1.9]{Grafakos14-1}) and Chebyshev's inequality, we conclude that
\begin{align*}
w(E_k)
\lesssim_{[w]_{A_1}}
4k\|f-g\|_{L^1(w)}
<
4k\varepsilon.
\end{align*}
We finish the proof of Lemma \ref{lem:210910-1} because $\varepsilon>0$ and $w(x)\,{\rm d}x$ and ${\rm d}x$ are mutually absolutely continuous.

(3) We omit the proof of this statement.
See \cite[Theorem 1.35]{Sawano18} for the discrete case.
A minor modification suffices for the continuous case.
The same argument applies to the Gaussian, although the Gaussian is not compactly supported.
\end{proof}

In addition, we use the following lemma:

\begin{lemma}\label{lem:211213-1}
Let $1\le p<\infty$.
Then the following assertions hold{\rm :}
\begin{itemize}
\item[{\rm (1)}] If $\delta\in(0,1)$ satisfies $\delta>1-1/p$,
\[
{\mathcal M}^p_1({\mathbb R}^n)\hookrightarrow L^1({\mathbb R}^n,(M\chi_{[-1,1]^n})^\delta).
\]
\item[{\rm (2)}] There exists a sufficiently large number $N\in{\mathbb N}$ such that
\begin{equation}\label{eq:210915-2}
|\langle f,\varphi\rangle|
\lesssim
\|f\|_{{\mathcal M}^p_1}\cdot p_N(\varphi)
\end{equation}
for all $f\in{\mathcal M}^p_1({\mathbb R}^n)$ and ${\mathcal S}({\mathbb R}^n)$.
In particular, the embedding
\[
{\mathcal M}^p_1({\mathbb R}^n)
\hookrightarrow{\mathcal S}'({\mathbb R}^n)
\]
holds.
\end{itemize}
\end{lemma}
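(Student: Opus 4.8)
The plan is to reduce both parts to a single integral estimate obtained by splitting a Morrey function over the dyadic annuli centred at the origin and matching the decay of the weight $w_\delta\equiv(M\chi_{[-1,1]^n})^\delta$ against the growth of the $\|\cdot\|_{{\mathcal M}^p_1}$-norm on large cubes. The starting point is the elementary two-sided estimate
\begin{equation*}
M\chi_{[-1,1]^n}(x)\sim\min(1,|x|^{-n}),\qquad x\in{\mathbb R}^n,
\end{equation*}
which follows by testing the maximal function against the cube centred at the origin of side length $\sim\max(1,|x|)$ for the lower bound, and by an elementary case analysis over cubes meeting $[-1,1]^n$ for the upper bound. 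Hence $w_\delta(x)\sim\min(1,|x|^{-n\delta})$; in particular $w_\delta\lesssim1$ on $A_0\equiv[-1,1]^n$ and $w_\delta\lesssim2^{-kn\delta}$ on the annulus $A_k\equiv[-2^k,2^k]^n\setminus[-2^{k-1},2^{k-1}]^n$ for each $k\in{\mathbb N}$, with constants independent of $k$.

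For (1), let $f\in{\mathcal M}^p_1({\mathbb R}^n)$; note $f\in L^1_{\rm loc}({\mathbb R}^n)$ because $\|f\chi_Q\|_{L^1}\le|Q|^{1-\frac1p}\|f\|_{{\mathcal M}^p_1}<\infty$ for every cube $Q$. Decomposing ${\mathbb R}^n=\bigcup_{k\ge0}A_k$ and using the annular bound for $w_\delta$,
\begin{equation*}
\int_{{\mathbb R}^n}|f(x)|w_\delta(x)\,{\rm d}x=\sum_{k=0}^\infty\int_{A_k}|f(x)|w_\delta(x)\,{\rm d}x\lesssim\sum_{k=0}^\infty2^{-kn\delta}\|f\chi_{[-2^k,2^k]^n}\|_{L^1}.
\end{equation*}
By the definition of $\|\cdot\|_{{\mathcal M}^p_1}$ we have $\|f\chi_{[-2^k,2^k]^n}\|_{L^1}\le|[-2^k,2^k]^n|^{1-\frac1p}\|f\|_{{\mathcal M}^p_1}\sim2^{kn(1-\frac1p)}\|f\|_{{\mathcal M}^p_1}$, so the right-hand side is controlled by $\|f\|_{{\mathcal M}^p_1}\sum_{k\ge0}2^{kn(1-\frac1p-\delta)}$, and this geometric series converges precisely because $\delta>1-\frac1p$. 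This yields ${\mathcal M}^p_1({\mathbb R}^n)\hookrightarrow L^1({\mathbb R}^n,w_\delta)$.

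For (2), fix any $\delta\in(0,1)$ with $\delta>1-\frac1p$ (possible since $1-\frac1p<1$) and choose an integer $N$ with $N\ge n$, so that $N\ge n\delta$. For $\varphi\in{\mathcal S}({\mathbb R}^n)$ the defining seminorm gives $|\varphi(x)|\le p_N(\varphi)(1+|x|)^{-N}\lesssim p_N(\varphi)\min(1,|x|^{-n\delta})\sim p_N(\varphi)w_\delta(x)$, using $N\ge n\delta$. Hence, invoking part (1),
\begin{equation*}
|\langle f,\varphi\rangle|\le\int_{{\mathbb R}^n}|f(x)|\,|\varphi(x)|\,{\rm d}x\lesssim p_N(\varphi)\int_{{\mathbb R}^n}|f(x)|w_\delta(x)\,{\rm d}x\lesssim p_N(\varphi)\|f\|_{{\mathcal M}^p_1},
\end{equation*}
which is \eqref{eq:210915-2}; since $p_N$ is one of the defining seminorms of ${\mathcal S}({\mathbb R}^n)$, this shows that $\varphi\mapsto\langle f,\varphi\rangle$ is continuous, i.e. ${\mathcal M}^p_1({\mathbb R}^n)\hookrightarrow{\mathcal S}'({\mathbb R}^n)$.

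The argument is otherwise routine; the only point requiring attention is the exponent bookkeeping, namely keeping $\delta$ strictly above $1-\frac1p$ so that the series in $k$ converges (while the complementary bound $\delta<1$, harmless for the present proof, is what makes $(M\chi_{[-1,1]^n})^\delta$ an $A_1$-weight once the lemma is fed into the weighted Hardy space machinery in the proof of Proposition~\ref{prop:200308-1}).
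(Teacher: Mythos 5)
Your proof is correct, and the core computation is the same one the paper uses: a dyadic-annulus decomposition of ${\mathbb R}^n$ into $[-2^k,2^k]^n\setminus[-2^{k-1},2^{k-1}]^n$, the bound $\|f\chi_{[-2^k,2^k]^n}\|_{L^1}\lesssim 2^{kn(1-1/p)}\|f\|_{{\mathcal M}^p_1}$, and a convergent geometric series. The only organizational difference is that the paper proves (2) directly with the decay $(1+|x|)^{-n}$ and merely cites an external reference for (1), whereas you prove (1) first via the comparison $M\chi_{[-1,1]^n}(x)\sim\min(1,|x|^{-n})$ and then deduce (2) from it with $N\ge n\ge n\delta$; this is a harmless reorganization that has the merit of making part (1) self-contained.
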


\begin{proof}
The proof of (1) is similar to \cite[Proposition 285]{SDH20-1}.

For convenience, we give the proof of (2).
We estimate
\begin{align*}
|\langle f,\varphi\rangle|
&\le
\int_{{\mathbb R}^n}|f(x)\varphi(x)|\,{\rm d}x
\le
\int_{{\mathbb R}^n}\frac{|f(x)|}{(1+|x|)^n}p_n(\varphi)\,{\rm d}x\\
&\le
\left(
\int_{[-1,1]^n}|f(x)|\,{\rm d}x
+
\sum_{j=1}^\infty\frac1{(\sqrt{n}2^{j-1})^n}\int_{[-2^j,2^j]^n}|f(x)|\,{\rm d}x
\right)p_n(\varphi)\\
&\lesssim
\left(
\|f\|_{{\mathcal M}^p_1}
+
\frac{2^{\left(2-\frac1p\right)n}}{\sqrt{n}^n}
\sum_{j=1}^\infty2^{-\frac{jn}p}\|f\|_{{\mathcal M}^p_1}
\right)p_n(\varphi)\\
&=
\left(
1+\frac{2^{\left(2-\frac1p\right)n}}{\sqrt{n}^n}\frac{2^{-\frac np}}{1-2^{-\frac np}}
\right)
\|f\|_{{\mathcal M}^p_1}\cdot p_n(\varphi).
\end{align*}
Then, \eqref{eq:210915-2} is proved.
\end{proof}

Let $w$ be a locally integrable function.
We define the weighted $L^1$-norm $\|\cdot\|_{L^1(w)}$ by
\begin{equation*}
\|f\|_{L^1(w)}
\equiv
\int_{{\mathbb R}^n}|f(x)|w(x)\,{\rm d}x,
\quad f\in L^0({\mathbb R}^n),
\end{equation*}
and set
\begin{equation*}
H^1({\mathbb R}^n,w)
\equiv
\left\{
f\in{\mathcal S}'({\mathbb R}^n)
\,:\,
\|f\|_{H^1(w)}\equiv\left\|\sup_{t>0}|e^{t\Delta}f|\right\|_{L^1(w)}<\infty
\right\}.
\end{equation*}
We use the following atomic decomposition for $H^1({\mathbb R}^n,w)$ with the $A_1$-weight $w$.

\begin{theorem}[\cite{NaSa14}]\label{thm:210423-1}
Let $w$ be an $A_1$-weight, and let $f\in H^1({\mathbb R}^n,w)$.
Then there exists a triplet
$\{\lambda_j\}_{j=1}^\infty\subset[0,\infty)$,
$\{Q_j\}_{j=1}^\infty\subset{\mathcal Q}({\mathbb R}^n)$
and
$
\{a_j\}_{j=1}^\infty\subset L^\infty({\mathbb R}^n)
$
such that
$f=\sum_{j=1}^\infty\lambda_ja_j$ in ${\mathcal S}'({\mathbb R}^n)$
and that
\begin{equation*}%\label{eq:200308-4.2}
|a_j|\le\chi_{Q_j}, \quad
\int_{{\mathbb R}^n}a_j(x)\,{\rm d}x=0, \quad
\left\|
\sum_{j=1}^\infty\lambda_j\chi_{Q_j}
\right\|_{L^1(w)}
\lesssim
\|f\|_{H^1(w)}.
\end{equation*}
In particular, $H^1({\mathbb R}^n,w)$ embedded into $L^1({\mathbb R}^n,w)$.
\end{theorem}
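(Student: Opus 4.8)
The plan is to run the classical weighted grand--maximal/Calder\'on--Zygmund decomposition of a tempered distribution, in the form going back to work on weighted Hardy spaces and following the scheme of \cite{NaSa14} (see also \cite{Sawano18} for the underlying distributional machinery). The first reduction is to replace the heat maximal function $\sup_{t>0}|e^{t\Delta}f|$ by the grand maximal operator ${\mathcal M}$ defined above: for $w\in A_1\subset A_\infty$ the standard chain of equivalences between the heat, Poisson, non-tangential and grand maximal functions gives $\|{\mathcal M}f\|_{L^1(w)}\sim\|f\|_{H^1(w)}$, so it suffices to decompose $f$ under the hypothesis $u\equiv{\mathcal M}f\in L^1(w)$. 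I record for later use that $u$ is lower semicontinuous (a supremum of continuous functions), that $w(\{u=\infty\})=0$, and that $L^1(w)\hookrightarrow{\mathcal S}'({\mathbb R}^n)$ for $w\in A_1$ (such $w$ is locally bounded below a.e.).

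For each $k\in{\mathbb Z}$ put $\Omega_k\equiv\{x\in{\mathbb R}^n:u(x)>2^k\}$, an open set with $w(\Omega_k)<\infty$, and take a Whitney decomposition $\Omega_k=\bigcup_i Q_{k,i}$ with $\ell(Q_{k,i})\sim{\rm dist}(Q_{k,i},{\mathbb R}^n\setminus\Omega_k)$ and bounded overlap of the slight dilates $\widetilde Q_{k,i}$, together with a smooth partition of unity $\{\zeta_{k,i}\}$ adapted to it ($\chi_{Q_{k,i}}\le\zeta_{k,i}\le\chi_{\widetilde Q_{k,i}}$, $\|\nabla\zeta_{k,i}\|_{L^\infty}\lesssim\ell(Q_{k,i})^{-1}$, $\sum_i\zeta_{k,i}=\chi_{\Omega_k}$). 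Introduce the correction constants $c_{k,i}$ (the appropriately normalized pairing of $f$ with $\zeta_{k,i}$), the bad pieces $b_{k,i}\equiv(f-c_{k,i})\zeta_{k,i}$, $b_k\equiv\sum_i b_{k,i}$, and $g_k\equiv f-b_k$. The quantitative input is that each $\widetilde Q_{k,i}$ meets ${\mathbb R}^n\setminus\Omega_k$, so the normalized bumps attached to $\widetilde Q_{k,i}$ become, after translation to a point where $u\le 2^k$, bounded multiples of elements of ${\mathcal F}_N$; this yields $|c_{k,i}|\lesssim 2^k$, and more generally $\|g_k\|_{L^\infty}\lesssim 2^k$ and $\|b_{k,i}\|_{L^\infty}\lesssim 2^k$, with all supports inside $\widetilde Q_{k,i}\subset\Omega_k$. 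Telescoping gives $f=\sum_{k\in{\mathbb Z}}(g_{k+1}-g_k)$, and since $\Omega_{k+1}\subset\Omega_k$ and $\sum_i\zeta_{k,i}=1$ on ${\rm supp}\,b_{k+1,j}$ one rewrites $g_{k+1}-g_k=b_k-b_{k+1}=\sum_i h_{k,i}$, where $h_{k,i}\equiv b_{k,i}-\sum_j\zeta_{k,i}b_{k+1,j}$ plus a mean-zero smooth correction; each $h_{k,i}$ is supported in $\widetilde Q_{k,i}$, has $\int_{{\mathbb R}^n}h_{k,i}=0$, and $\|h_{k,i}\|_{L^\infty}\lesssim 2^k$. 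Setting $\lambda_{k,i}\equiv C2^k$, $Q^\ast_{k,i}\equiv\widetilde Q_{k,i}$, $a_{k,i}\equiv C^{-1}2^{-k}h_{k,i}$ and reindexing $\{(k,i)\}$ as $\{j\}$ produces a triplet with $|a_j|\le\chi_{Q^\ast_j}$ and $\int a_j=0$.

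It remains to verify convergence and the norm bound. That $f=\sum_j\lambda_j a_j$ in ${\mathcal S}'({\mathbb R}^n)$ follows from the telescoping: as $k\to+\infty$, $\|b_k\|_{L^1(w)}\lesssim 2^k w(\Omega_k)\to 0$ (the terms of the convergent series below tend to $0$) so $g_k\to f$; as $k\to-\infty$, $\|g_k\|_{L^\infty}\lesssim 2^k\to 0$ together with the support information gives $g_k\to 0$. For the quantitative estimate, using that $A_1$-weights are doubling, that the $\widetilde Q_{k,i}$ have bounded overlap, and that the $Q_{k,i}$ partition $\Omega_k$,
\begin{align*}
\Big\|\sum_j\lambda_j\chi_{Q^\ast_j}\Big\|_{L^1(w)}
&= C\sum_k 2^k\sum_i w(\widetilde Q_{k,i})
\lesssim \sum_k 2^k\,w(\Omega_k)\\
&= \sum_k 2^k\,w(\{u>2^k\})
\lesssim \int_{{\mathbb R}^n}u(x)\,w(x)\,{\rm d}x
\sim \|f\|_{H^1(w)}.
\end{align*}
Finally, the ``in particular'' assertion is immediate: $\sum_j|\lambda_j a_j|\le\sum_j\lambda_j\chi_{Q^\ast_j}$ converges in $L^1(w)$ with norm $\lesssim\|f\|_{H^1(w)}$, so $\sum_j\lambda_j a_j$ converges in $L^1(w)$, and since it also converges to $f$ in ${\mathcal S}'({\mathbb R}^n)$ its $L^1(w)$-limit is (a representative of) $f$.

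The hard part is the construction in the second paragraph: proving that the $h_{k,i}$ are genuine bounded $\infty$-atoms with $\|h_{k,i}\|_{L^\infty}\lesssim 2^k$. This rests on the grand--maximal characterization $\|{\mathcal M}f\|_{L^1(w)}\sim\|f\|_{H^1(w)}$ (whose weighted proof uses $A_1\subset A_\infty$ and weighted vector-valued maximal inequalities) and on careful Whitney/partition-of-unity bookkeeping so that every correction constant is dominated by a value of ${\mathcal M}f$ at a point outside $\Omega_k$, hence by $2^k$. Once that pointwise bound is secured, the summation displayed above is routine, and the role of $A_1$ (beyond mere doubling) is confined to the maximal-function equivalence and to $L^1(w)\hookrightarrow{\mathcal S}'$.
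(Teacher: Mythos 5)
The paper does not prove this theorem; it is imported verbatim from \cite{NaSa14} and used as a black box. Your outline reproduces exactly the classical Calder\'on--Zygmund/grand-maximal decomposition on which the cited result rests (level sets of ${\mathcal M}f$, Whitney covers, the good/bad splitting with correction constants controlled by $2^k$, telescoping, and the layer-cake summation $\sum_k 2^k w(\Omega_k)\lesssim\|{\mathcal M}f\|_{L^1(w)}$), and it is correct in its essentials, with the genuinely technical step --- verifying that the $h_{k,i}$ are bona fide mean-zero $L^\infty$-atoms --- honestly identified rather than glossed over.
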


Now, we start the proof of Proposition \ref{prop:200308-1}.

\begin{proof}[Proof of {\rm Proposition \ref{prop:200308-1}}]
(1)
By embedding for Morrey-Lorentz spaces (see Proposition \ref{prop:Mpqr-embedding}) and Lemma \ref{lem:211213-1} (2), we have
\[
f\in{\mathcal M}^p_{q,r}({\mathbb R}^n)
\hookrightarrow{\mathcal M}^p_1({\mathbb R}^n)
\hookrightarrow{\mathcal S}'({\mathbb R}^n).
\]
As is described in \cite[Proposition 2.7]{Duoandikoetxea01}, we have a pointwise estimate
\[
|e^{t\Delta}f| \le Mf.
\]
Since $M$ is shown to be bounded on the Morrey-Lorentz spaces ${\mathcal M}^p_{q,r}({\mathbb R}^n)$ (Proposition \ref{prop:190622-2}), we have $f\in H{\mathcal M}^p_{q,r}({\mathbb R}^n)$.

(2)
Let $f\in H{\mathcal M}^p_{q,r}({\mathbb R}^n)$.
Using Proposition \ref{prop:Mpqr-embedding} and Lemma \ref{lem:211213-1} (1), we have
\begin{equation*}
{\mathcal M}^p_{q,r}({\mathbb R}^n)
\hookrightarrow
{\mathcal M}^p_1({\mathbb R}^n)
\hookrightarrow
L^1({\mathbb R}^n,(M\chi_{[-1,1]^n})^\delta).
\end{equation*}
Since $(M\chi_{[-1,1]^n})^\delta$ is an $A_1$-weight (see \cite[Theorem 7.7]{Duoandikoetxea01}), by Theorem \ref{thm:210423-1},
\begin{equation*}%\label{eq:210414-1}
H{\mathcal M}^p_{q,r}({\mathbb R}^n)
\hookrightarrow
H^1({\mathbb R}^n,(M\chi_{[-1,1]^n})^\delta)
\hookrightarrow
L^1({\mathbb R}^n,(M\chi_{[-1,1]^n})^\delta).
\end{equation*}
Consequently, applying Lemma \ref{lem:210910-1} (1), we obtain
\begin{equation*}
H{\mathcal M}^p_{q,r}({\mathbb R}^n)
\hookrightarrow
{\mathcal M}^p_{q,r}({\mathbb R}^n).
\end{equation*}
\end{proof}

\subsection{Proof of Proposition \ref{prop:200308-2}}%\label{ss3.2}

The proof is similar to Hardy spaces with variable exponents \cite{CrWa14,NaSa12}.
We content ourselves with stating two fundamental estimates (\ref{eq:120918-5}) and (\ref{eq:120918-6}) below.

Suppose that we are given an integer $K \gg 1$.
We write
\begin{equation*}%\label{eq:M-star-heat}
M^*_{\rm heat} f(x)
\equiv 
\sup_{j \in {\mathbb Z}}
\left(\sup_{y \in {\mathbb R}^n}\frac{|e^{2^j\Delta}f(y)|}{(1+4^j|x-y|^2)^K}\right),
\quad x\in{\mathbb R}^n.
\end{equation*}

The next lemma stands for the pointwise estimate for $M^*_{\rm heat}$ in terms of the usual Hardy-Littlewood maximal operator $M$.

\begin{lemma}[{\rm \cite[Lemma 3.2]{NaSa12}, \cite[\S 4]{Sawano09}}]%\label{lem:200316-1}
For $0<\theta<1$, there exists $K_\theta$ so that for all $K \ge K_\theta$, we have
\begin{equation}\label{eq:120918-5}
M^*_{\rm heat} f(x)
\lesssim
M\left[\sup_{k \in {\mathbb Z}}|e^{2^k\Delta}f|^\theta\right](x)^{\frac{1}{\theta}},
\quad x\in{\mathbb R}^n
\end{equation}
for any $f\in{\mathcal S}'({\mathbb R}^n)$.
\end{lemma}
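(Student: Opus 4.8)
The plan is to run the classical Peetre--Fefferman--Stein--Uchiyama argument, which controls a tangential heat maximal function by the Hardy--Littlewood maximal function of a fixed power of the vertical one. Write $u_j\equiv e^{2^j\Delta}f$ and $g\equiv\sup_{k\in{\mathbb Z}}|u_k|$, fix $x\in{\mathbb R}^n$, and abbreviate ${\mathcal A}\equiv M^*_{\rm heat}f(x)$; the goal is ${\mathcal A}\lesssim M[g^\theta](x)^{1/\theta}$. Throughout I use the semigroup property to write $u_j=\Phi_j\ast v_j$, where $\Phi_j$ is a Gaussian of the scale $2^{-j}$ that is built into the weight of $M^*_{\rm heat}$ and $v_j$ is a heat extension of $f$ at an adjacent scale, so that $|v_j|\le g$ pointwise; the relevant kernel bounds are $|\Phi_j(z)|+2^{-j}|\nabla\Phi_j(z)|\lesssim_L 2^{jn}(1+4^j|z|^2)^{-L}$ for every $L$. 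The a priori finiteness of ${\mathcal A}$ for an arbitrary $f\in{\mathcal S}'({\mathbb R}^n)$, and the legitimacy of the two suprema, are handled by a routine truncation/limiting device as in \cite{NaSa12,Sawano09}, which I take for granted; so I argue under the standing assumption $0<{\mathcal A}<\infty$.

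The first main step is a \emph{slow-variation (smoothness) estimate}. Combining $u_j=\Phi_j\ast v_j$, the gradient bound above, and the crude majorization $|v_j(z)|\lesssim{\mathcal A}\,(1+4^j|x-z|^2)^K\lesssim{\mathcal A}\,(1+4^j|x-y|^2)^K(1+4^j|y-z|^2)^K$ (the first inequality because $v_j$ sits at scale $\sim 2^{-j}$), one obtains
\[
2^{-j}\,|\nabla u_j(y)|\ \lesssim_L\ {\mathcal A}\,(1+4^j|x-y|^2)^K\int_{{\mathbb R}^n}2^{jn}(1+4^j|y-z|^2)^{K-L}\,{\rm d}z .
\]
Choosing $L>K+n$ makes the integral a finite constant independent of $j$, hence
\[
|\nabla u_j(y)|\ \lesssim\ 2^j\,{\mathcal A}\,(1+4^j|x-y|^2)^K,\qquad y\in{\mathbb R}^n,\ j\in{\mathbb Z}.
\]

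The second main step is the \emph{reaching-in argument}. Choose a near-extremizer $(j_0,y_0)$ with $|u_{j_0}(y_0)|>\tfrac12{\mathcal A}\,(1+4^{j_0}|x-y_0|^2)^K$. Integrating the slow-variation estimate over $B\equiv B(y_0,\varepsilon\,2^{-j_0})$ shows that, for a small absolute constant $\varepsilon=\varepsilon(n,K)$, both $(1+4^{j_0}|x-\cdot|^2)^K\sim(1+4^{j_0}|x-y_0|^2)^K$ and $|u_{j_0}|\ge\tfrac12|u_{j_0}(y_0)|$ hold on $B$; hence $g\ge|u_{j_0}|\gtrsim{\mathcal A}\,(1+4^{j_0}|x-y_0|^2)^K$ throughout $B$. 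Pick a cube $Q\ni x$ containing $B$ of side length $\sim 2^{-j_0}(1+2^{j_0}|x-y_0|)$, so that $|B|/|Q|\sim\varepsilon^{n}(1+4^{j_0}|x-y_0|^2)^{-n/2}$. Then
\[
M[g^\theta](x)\ \ge\ \frac1{|Q|}\int_Q g^\theta\ \ge\ \frac{|B|}{|Q|}\Bigl(\inf_B g\Bigr)^{\theta}\ \gtrsim\ \varepsilon^{n}\,{\mathcal A}^\theta\,(1+4^{j_0}|x-y_0|^2)^{K\theta-\frac n2}.
\]
Fixing $K_\theta$ with $K_\theta\,\theta\ge n/2$, the exponent $K\theta-n/2$ is nonnegative for every $K\ge K_\theta$, so the weight factor is $\ge1$; since $(j_0,y_0)$ was an arbitrary near-extremizer, this yields ${\mathcal A}\lesssim_{n,\theta,K}M[g^\theta](x)^{1/\theta}$, i.e.\ \eqref{eq:120918-5}.

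The step I expect to be the main obstacle is the slow-variation estimate: one must equip $\Phi_j$ and $\nabla\Phi_j$ with enough polynomial decay to absorb \emph{simultaneously} the factor $(1+4^j|y-z|^2)^K$ produced when $|x-z|$ is split through $|x-y|$ and $|y-z|$ \emph{and} the $z$-integration, with all implicit constants uniform in $j\in{\mathbb Z}$ --- and this is exactly what forces the threshold $K_\theta$ (which must satisfy $K_\theta\,\theta\ge n/2$, so $K_\theta\sim n/\theta$), the only place where $K\ge K_\theta$ is used. The remaining ingredients --- the Gaussian kernel estimates, the mean-value bound $|u_{j_0}(y)-u_{j_0}(y_0)|\le\varepsilon\,2^{-j_0}\sup_B|\nabla u_{j_0}|$, the size comparison of $B$ with $Q$, and the truncation/limiting that legitimizes ${\mathcal A}<\infty$ --- are routine.
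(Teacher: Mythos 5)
The paper does not actually prove this lemma --- it is imported verbatim from \cite{NaSa12} and \cite{Sawano09} --- and your argument is exactly the standard Peetre--Fefferman--Stein scheme underlying those references: the semigroup reproducing formula plus Gaussian kernel decay yields the slow-variation (gradient) bound, and the reaching-in/averaging step produces the threshold $K\theta\ge n/2$, which is the usual $N\theta\ge n$ once one notes that the weight $(1+4^j|x-y|^2)^K$ is comparable to $(1+2^j|x-y|)^{2K}$ (you also, correctly, read the statement with the Gaussian at the spatial scale $2^{-j}$ dictated by that weight, silently repairing the dimensional mismatch with ``$e^{2^j\Delta}$'' in the paper's formula). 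The one genuinely non-cosmetic point is the a priori finiteness of ${\mathcal A}=M^*_{\rm heat}f(x)$, without which both the slow-variation estimate and the near-extremizer selection are vacuous; you flag it explicitly, and the standard device (prove finiteness for a sufficiently large exponent $K'$ depending on the order of $f\in{\mathcal S}'({\mathbb R}^n)$, then bootstrap down to the given $K$) closes it, so the proof is sound.
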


In the course of the proof of \cite[Theorem 3.3]{NaSa12}, it can be shown that
\begin{equation}\label{eq:120918-6}
{\mathcal M}f(x)
\sim
\sup_{\tau \in {\mathcal F}_N, \, j \in {\mathbb Z}}|\tau^j*f(x)|
\lesssim
M^*_{\rm heat} f(x)
\end{equation}
once we fix an integer $K \gg 1$ and $N \gg 1$.

Applying Proposition \ref{prop:190622-2} with the fundamental pointwise estimates (\ref{eq:120918-5}) and (\ref{eq:120918-6}), Proposition \ref{prop:200308-2} can be proved with ease.
We omit the details.

\section{Proof of Theorems \ref{thm:200308-1}, \ref{thm:200308-3} and \ref{thm:200929-1}}\label{s4}

\subsection{Proof of Theorem \ref{thm:200308-1}}%\label{ss4.1}

We employ the argument from the proof of Theorem 1.1 in \cite{IST14}.

By the decomposition of $Q_j$, we may assume that each $Q_j$ is a dyadic cube.
We may assume that there exists $N \in {\mathbb N}$ such that $\lambda_j=0$ whenever $j \ge N$.
In addition, let us assume that the $a_j$'s are non-negative.
By the embedding $\ell^v({\mathbb N})\hookrightarrow\ell^1({\mathbb N})$ and the duality argument, we note that
\begin{align*}
\|f\|_{{\mathcal M}^p_{q,r}}^v
\le
\left\|\sum_{j=1}^\infty|\lambda_ja_j|^v\right\|_{{\mathcal M}^{\tilde{p}}_{\tilde{q},\tilde{r}}}
\sim
\sup
\left\{
\int_{{\mathbb R}^n}\sum_{j=1}^\infty|\lambda_ja_j(x)|^v|g(x)|\,{\rm d}x
:
\|g\|_{{\mathcal H}^{\tilde{p}'}_{\tilde{q}',\tilde{r}'}}=1
\right\},
\end{align*}
where we set $\tilde{p}:=p/v$, $\tilde{q}:=q/v$ and $\tilde{r}:=r/v$.
Then, we may assume that the $a_j$'s are non-negative and $g$ is a non-negative $(\tilde{p}',\tilde{q}',\tilde{r}')$-block with associated dyadic cube $Q$.
Then, we show that
\begin{equation}\label{eq:200308-6}
\int_{{\mathbb R}^n}\sum_{j=1}^\infty(\lambda_ja_j(x))^vg(x)\,{\rm d}x
\lesssim_{p,q,r,s,t,v}
\left\|
\left(\sum_{j=1}^{\infty}(\lambda_j\chi_{Q_j})^v\right)^{\frac1v}
\right\|_{{\mathcal M}^p_{q,r}}^v.
\end{equation}

Assume first that each $Q_j$ contains $Q$ as a proper subset.
If we group the $j$'s such that all $Q_j$ are identical,
we can assume that each $Q_j$ is a $j$-th parent of $Q$ for each $j\in{\mathbb N}$.
Then, by the H\"older inequality for Lorentz spaces (see Lemma \ref{lem:Lpq-Holder}),
\begin{align*}
\int_{{\mathbb R}^n}\sum_{j=1}^\infty(\lambda_ja_j(x))^vg(x)\,{\rm d}x
&=
\sum_{j=1}^\infty\lambda_j^v\int_{Q}a_j(x)^vg(x)\,{\rm d}x\\
&\lesssim
\sum_{j=1}^\infty
\lambda_j^v\|a_j\chi_Q\|_{L^{t,\infty}}^v\|g\|_{L^{\tilde{q}',\tilde{r}'}}|Q|^{\frac vq-\frac vt}\\
&\le
\sum_{j=1}^\infty\lambda_j^v\|a_j\|_{{\rm W}{\mathcal M}^s_t}^v
|Q|^{-\frac vs+\frac vt}|Q|^{\frac1{\tilde{q}'}-\frac1{\tilde{p}'}}|Q|^{\frac vq-\frac vt}\\
&=
\sum_{j=1}^\infty\lambda_j^v|Q_j|^{\frac vs}|Q|^{\frac vp-\frac vs}.
\end{align*}
Note that by Proposition \ref{prop:Mpqr-indicator}, for each $J\in{\mathbb N}$,
\begin{align*}
\left\|
\left(\sum_{k=1}^\infty(\lambda_k\chi_{Q_k})^v\right)^{\frac1v}
\right\|_{{\mathcal M}^p_{q,r}}
\ge
\lambda_J\left\|\chi_{Q_J}\right\|_{{\mathcal M}^p_{q,r}}
\sim
\lambda_J|Q_J|^{\frac1p}.
\end{align*}
Consequently, it follows from the condition $p<s$ that
\begin{align*}
\int_{{\mathbb R}^n}\sum_{j=1}^\infty(\lambda_ja_j(x))^vg(x)\,{\rm d}x
&\lesssim
\sum_{j=1}^\infty
|Q_j|^{\frac vs-\frac vp}|Q|^{\frac vp-\frac vs}
\cdot
\left\|
\left(\sum_{k=1}^\infty(\lambda_k\chi_{Q_k})^v\right)^{\frac1v}
\right\|_{{\mathcal M}^p_{q,r}}^v\\
&\lesssim
\left\|
\left(\sum_{k=1}^\infty(\lambda_k\chi_{Q_k})^v\right)^{\frac1v}
\right\|_{{\mathcal M}^p_{q,r}}^v.
\end{align*}
Conversely, assume that $Q$ contains each $Q_j$.
Then, again by the H\"older inequality (see Lemma \ref{lem:Lpq-Holder}),
\begin{align*}
\int_{{\mathbb R}^n}\sum_{j=1}^\infty(\lambda_ja_j(x))^vg(x)\,{\rm d}x
&=
\sum_{j=1}^\infty\lambda_j^v\int_{Q_j}a_j(x)^vg(x)\,{\rm d}x
\lesssim
\sum_{j=1}^\infty\lambda_j^v\|a_j\|_{L^{t,\infty}}^v\|g\chi_{Q_j}\|_{L^{\tilde{t}',1}}\\
&\le
\sum_{j=1}^\infty\lambda_j\|a_j\|_{{\rm W}{\mathcal M}^s_t}^v
|Q_j|^{-\frac vs+\frac vt}\|g\chi_{Q_j}\|_{L^{\tilde{t}',1}}\\
&\le
\sum_{j=1}^\infty\lambda_j^v|Q_j|^{\frac vt}\|g\chi_{Q_j}\|_{L^{\tilde{t}',1}},
\end{align*}
where $\tilde{t}:=t/v$.
Thus, in terms of the maximal operator $M^{(\tilde{t}',1)}$,
we obtain
\begin{align*}
\int_{{\mathbb R}^n}\sum_{j=1}^\infty(\lambda_ja_j(x))^vg(x)\,{\rm d}x
&\le
\sum_{j=1}^\infty\lambda_j^v|Q_j|\cdot\inf_{y \in Q_j}M^{(\tilde{t}',1)}g(y)\\
&\le
\int_{{\mathbb R}^n}
\left(\sum_{j=1}^\infty(\lambda_j\chi_{Q_j}(y))^v\right)\chi_Q(y)M^{(\tilde{t}',1)}g(y)
\,{\rm d}y.
\end{align*}
Hence, we obtain (\ref{eq:200308-6}) by the H\"older inequality (see Lemma \ref{lem:Lpq-Holder}) and the $L^{\tilde{q}',\tilde{r}'}({\mathbb R}^n)$-boundedness of the maximal operator $M^{(\tilde{t}',1)}$ (see Proposition \ref{prop:190620-1}).

It remains to check the convergence of the sum.
Here, when $r<\infty$, by the estimate of \eqref{eq:200308-1.3}, the Lebesgue convergence theorem yields
\begin{equation*}
\left\|
\left(\sum_{j=1}^\infty\lambda_ja_j-\sum_{j=1}^J\lambda_ja_j\right)\chi_R
\right\|_{L^{q,r}}
\le
\left\|\left(\sum_{j=J+1}^\infty|\lambda_ja_j|\right)\chi_R\right\|_{L^{q,r}}
\to0
\end{equation*}
as $J\to\infty$ for each $R\in{\mathcal Q}({\mathbb R}^n)$.
Namely, $f=\sum_{j=1}^{\infty} \lambda_j a_j$ converges in $L_{\rm loc}^{q,r}({\mathbb R}^n)$.
The case of $p=q$ and $r<\infty$ can be also dealt with by a similar approach.

\subsection{Lemmas for the proofs of Theorems \ref{thm:200929-1} and \ref{thm:200308-4}}\label{ss:keylem}

To prove Theorems \ref{thm:200929-1} and \ref{thm:200308-4}, we gave some estimate for atoms belonging to the critical cases ${\mathcal M}^s_1({\mathbb R}^n)$.

\begin{lemma}\label{lem:210803-11}
Let $1\le s<\infty$, $K\in{\mathbb N}_0$, and $Q\in{\mathcal Q}({\mathbb R}^n)$.
Assume that $a\in{\mathcal M}^s_1({\mathbb R}^n)\cap{\mathcal P}_K({\mathbb R}^n)^\perp$ satisfies
\begin{equation}\label{eq:210908-1}
{\rm supp}(a)\subset Q, \quad
\|a\|_{{\mathcal M}^s_1}\le|Q|^{\frac1s}.
\end{equation}
Then, for all $\varphi\in{\mathcal S}({\mathbb R}^n)$ and $N>0$,
\begin{equation*}
\left|\int_{{\mathbb R}^n}a(x)\varphi(x)\,{\rm d}x\right|
\lesssim
\ell(Q)^{n+K+1}\sup_{y\in Q}\frac1{1+|y|^N}.
\end{equation*}
Here, the implicit constant in $\lesssim$ depends on $\varphi$.
\end{lemma}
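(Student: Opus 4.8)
The plan is to exploit the vanishing moments of $a$. Since $a\in{\mathcal P}_K({\mathbb R}^n)^\perp$, we have $\int_{{\mathbb R}^n}a(x)P(x)\,{\rm d}x=0$ for every polynomial $P$ of degree at most $K$, and also $a\in L^1({\mathbb R}^n)$ (indeed $\|a\chi_Q\|_{L^1}\le|Q|^{1-1/s}\|a\|_{{\mathcal M}^s_1}<\infty$ on testing the ${\mathcal M}^s_1$-norm on $Q$). Hence, before pairing $a$ with $\varphi$, we may subtract from $\varphi$ its degree-$K$ Taylor polynomial $P$ about the center $c_Q$ of $Q$ and write
\[
\int_{{\mathbb R}^n}a(x)\varphi(x)\,{\rm d}x=\int_{Q}a(x)\bigl(\varphi(x)-P(x)\bigr)\,{\rm d}x .
\]
The Taylor remainder will produce the factor $\ell(Q)^{K+1}$, the Schwartz decay of $\varphi$ the factor $\sup_{y\in Q}(1+|y|)^{-N}$, and the normalization of $a$ the factor $\ell(Q)^n$; finally $\sup_{y\in Q}(1+|y|)^{-N}\le 2\sup_{y\in Q}(1+|y|^N)^{-1}$, since $1+t^N\le 2(1+t)^N$ for all $t\ge0$.

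Next I would estimate the remainder. By Taylor's theorem with integral remainder, for $x\in Q$,
\[
|\varphi(x)-P(x)|\le C_{n,K}\,|x-c_Q|^{K+1}\max_{|\alpha|=K+1}\ \sup_{0\le\theta\le1}\bigl|\partial^\alpha\varphi\bigl(c_Q+\theta(x-c_Q)\bigr)\bigr| .
\]
Here the decisive point is that a cube is convex and $c_Q\in Q$, so the whole segment from $c_Q$ to $x$ lies in $Q$; since $\varphi\in{\mathcal S}({\mathbb R}^n)$, each derivative occurring above is therefore $\lesssim_{\varphi,N}\sup_{y\in Q}(1+|y|)^{-N}$. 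Combining this with $|x-c_Q|\le\tfrac{\sqrt n}{2}\ell(Q)$ gives, for $x\in Q$,
\[
|\varphi(x)-P(x)|\lesssim_{n,K,\varphi,N}\ell(Q)^{K+1}\sup_{y\in Q}\frac{1}{(1+|y|)^{N}} .
\]

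To finish, I would bound $\int_Q|a(x)|\,{\rm d}x=\|a\chi_Q\|_{L^1}$ by testing the definition of $\|\cdot\|_{{\mathcal M}^s_1}$ on the cube $Q$ itself: $|Q|^{\frac1s-1}\|a\chi_Q\|_{L^1}\le\|a\|_{{\mathcal M}^s_1}\le|Q|^{\frac1s}$, hence $\int_Q|a|\le|Q|=\ell(Q)^n$. Plugging the two estimates into the displayed identity yields
\[
\left|\int_{{\mathbb R}^n}a(x)\varphi(x)\,{\rm d}x\right|\le\int_Q|a(x)|\,|\varphi(x)-P(x)|\,{\rm d}x\lesssim\ell(Q)^{n+K+1}\sup_{y\in Q}\frac{1}{1+|y|^{N}} ,
\]
as claimed. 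The argument is short, and the only places that need a little care are the geometric observation that the Taylor intermediate points stay in $Q$ (so that the Schwartz bounds on $\partial^\alpha\varphi$ may be evaluated over $Q$), and the bookkeeping that every implicit constant depends only on $n$, $K$, $N$ and $\varphi$ — in particular not on $Q$ or $a$, since $a$ enters only through the inequality $\int_Q|a|\le|Q|$.
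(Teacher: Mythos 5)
Your proof is correct and follows essentially the same route as the paper: subtract the degree-$K$ Taylor polynomial of $\varphi$ at the center of $Q$ (using the vanishing moments), bound the remainder by $\ell(Q)^{K+1}\sup_{y\in Q}(1+|y|)^{-N}$ via the Schwartz decay of $\partial^\beta\varphi$ at the intermediate points (which lie in $Q$ by convexity), and control $\int_Q|a|\le|Q|^{1-1/s}\|a\|_{{\mathcal M}^s_1}\le\ell(Q)^n$ from the Morrey normalization. The only cosmetic difference is that you use the integral form of the Taylor remainder where the paper uses the mean-value (Lagrange) form, and you spell out the harmless passage from $(1+|y|)^{-N}$ to $(1+|y|^N)^{-1}$.
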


\begin{proof}
By the mean-value theorem, there exists $\theta\in(0,1)$ depending on $x$, $K$, $Q$, and $\varphi$ such that
\begin{align*}
\int_{{\mathbb R}^n}a(x)\varphi(x)\,{\rm d}x
&=
\int_{{\mathbb R}^n}
a(x)
\left(
\varphi(x)-\sum_{|\alpha|\le K}\frac1{\alpha!}\partial^\alpha\varphi(c(Q))(x-c(Q))^\alpha
\right)
\,{\rm d}x\\
&=
\int_{{\mathbb R}^n}
a(x)
\sum_{|\beta|=K+1}
\frac1{\beta!}\partial^\beta\varphi((1-\theta)x+\theta c(Q))(x-c(Q))^\beta
\,{\rm d}x.
\end{align*}
Then, from \eqref{eq:210908-1}
\begin{align*}
\left|\int_{{\mathbb R}^n}a(x)\varphi(x)\,{\rm d}x\right|
&\lesssim
\ell(Q)^{K+1}\sup_{y\in Q}\frac1{1+|y|^N}\int_Q|a(x)|\,{\rm d}x\\
&\le
\ell(Q)^{K+1}\sup_{y\in Q}\frac1{1+|y|^K}
\|a\|_{{\mathcal M}^s_1}\cdot|Q|^{-\frac1s+1}\\
&\lesssim
\ell(Q)^{n+K+1}\sup_{y\in Q}\frac1{1+|y|^N},
\end{align*}
as desired.
\end{proof}

\begin{lemma}\label{lem:210924-1}
Let $0<q\le p<\infty$, $1\le s<\infty$, and $K\in{\mathbb N}_0$.
Assume that $\{\lambda_Q\}_{Q\in{\mathcal D}({\mathbb R}^n)}\subset[0,\infty)$ and $\{a_Q\}_{Q\in{\mathcal D}({\mathbb R}^n)}\subset{\mathcal M}^s_1({\mathbb R}^n)\cap{\mathcal P}_K({\mathbb R}^n)^\perp$ satisfy
\[
{\rm supp}(a_Q)\subset3Q, \quad
\|a\|_{{\mathcal M}^s_1}\le|Q|^{\frac1s}, \quad
\left\|\sum_{Q\in{\mathcal D}({\mathbb R}^n)}\lambda_Q\chi_Q\right\|_{{\mathcal M}^p_q}
<\infty.
\]
If $q\le1$ and $n+K+1>n/q$, then
\begin{equation}\label{eq:210924-1}
\sum_{m=1}^\infty\sum_{Q\in{\mathcal D}_m({\mathbb R}^n)}
\lambda_Q|\langle a_Q,\varphi\rangle|
\lesssim
\left\|\sum_{Q\in{\mathcal D}({\mathbb R}^n)}\lambda_Q\chi_Q\right\|_{{\mathcal M}^p_q}.
\end{equation}
\end{lemma}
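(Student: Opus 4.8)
Write $F:=\sum_{Q\in\mathcal{D}(\mathbb{R}^n)}\lambda_Q\chi_Q$, so that the right-hand side of \eqref{eq:210924-1} is $\|F\|_{\mathcal{M}^p_q}$. The plan is to replace each pairing $\langle a_Q,\varphi\rangle$ by the pointwise bound furnished by Lemma~\ref{lem:210803-11}, and then to sum the resulting series against $\|F\|_{\mathcal{M}^p_q}$; the two hypotheses $q\le1$ and $n+K+1>n/q$ will be used, respectively, to allow an $\ell^q$--$\ell^1$ passage and to produce an extra negative power of the side length. First, since ${\rm supp}(a_Q)\subset 3Q$, $a_Q\in\mathcal{M}^s_1(\mathbb{R}^n)\cap\mathcal{P}_K(\mathbb{R}^n)^\perp$ and $\|a_Q\|_{\mathcal{M}^s_1}\le|Q|^{1/s}\le|3Q|^{1/s}$, I would apply Lemma~\ref{lem:210803-11} with the cube $3Q$ in place of the support cube and with $N:=n+1$, getting
\[
|\langle a_Q,\varphi\rangle|
\lesssim_{\varphi}\ell(3Q)^{n+K+1}\sup_{y\in 3Q}\frac1{1+|y|^{n+1}}
\lesssim\ell(Q)^{n+K+1}\sup_{y\in 3Q}\frac1{1+|y|^{n+1}}.
\]
For $Q\in\mathcal{D}_m$ with $m\ge1$ the cube $3Q$ has diameter at most $3\sqrt n/2$, so $|y|\ge|x|-\sqrt n$ for all $y\in3Q$ and $x\in Q$, whence an elementary comparison yields $\sup_{y\in3Q}(1+|y|^{n+1})^{-1}\lesssim_n\inf_{x\in Q}(1+|x|^{n+1})^{-1}$. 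It therefore suffices to prove that
\[
S:=\sum_{m=1}^\infty\sum_{Q\in\mathcal{D}_m}\lambda_Q\,\ell(Q)^{n+K+1}\inf_{x\in Q}\frac1{1+|x|^{n+1}}
\]
is bounded by $\|F\|_{\mathcal{M}^p_q}$.

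Next, I would set $\eta:=n+K+1-n/q$, which is positive by hypothesis, and use the identity $\ell(Q)^{n+K+1}=2^{-m\eta}|Q|^{1/q}$ for $Q\in\mathcal{D}_m$, so that $S=\sum_{m\ge1}2^{-m\eta}S_m$ with $S_m:=\sum_{Q\in\mathcal{D}_m}\lambda_Q|Q|^{1/q}\inf_{x\in Q}(1+|x|^{n+1})^{-1}$. To estimate $S_m$, partition $\mathbb{R}^n$ into the unit cubes $R_k:=k+[0,1)^n$, $k\in\mathbb{Z}^n$; since $m\ge0$, each $Q\in\mathcal{D}_m$ lies in exactly one $R_k$, and $1+|x|^{n+1}\sim_n1+|k|^{n+1}$ for $x\in Q\subset R_k$. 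Writing $g_m:=\sum_{Q\in\mathcal{D}_m}\lambda_Q\chi_Q\le F$, and using first the subadditivity $\sum_jb_j\le(\sum_jb_j^q)^{1/q}$ valid for $0<q\le1$, then the disjointness of the cubes in $\mathcal{D}_m$ together with the definition of $\|\cdot\|_{\mathcal{M}^p_q}$, I get
\[
\sum_{\substack{Q\in\mathcal{D}_m\\ Q\subset R_k}}\lambda_Q|Q|^{1/q}
\le\Big(\sum_{\substack{Q\in\mathcal{D}_m\\ Q\subset R_k}}\lambda_Q^q|Q|\Big)^{1/q}
=\|g_m\chi_{R_k}\|_{L^q}
\le|R_k|^{\frac1q-\frac1p}\|g_m\|_{\mathcal{M}^p_q}
\le\|F\|_{\mathcal{M}^p_q},
\]
the last step because $|R_k|=1$ and $0\le g_m\le F$. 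Grouping the cubes of $\mathcal{D}_m$ by their host block and summing over $k$ then gives $S_m\lesssim_n\|F\|_{\mathcal{M}^p_q}\sum_{k\in\mathbb{Z}^n}(1+|k|^{n+1})^{-1}\lesssim_n\|F\|_{\mathcal{M}^p_q}$, and finally
\[
S=\sum_{m=1}^\infty2^{-m\eta}S_m
\lesssim_n\Big(\sum_{m=1}^\infty2^{-m\eta}\Big)\|F\|_{\mathcal{M}^p_q}
\lesssim\|F\|_{\mathcal{M}^p_q},
\]
since $\eta>0$; this is \eqref{eq:210924-1}.

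The step I expect to require the most care is the estimate for $S_m$: because $q$ may be strictly less than $1$, Hölder's inequality is unavailable in its usual form, and the only mechanism producing summability is the interplay between the $\ell^q$--$\ell^1$ subadditivity and the surplus power $\ell(Q)^{\eta}$ coming from the vanishing-moment bound of Lemma~\ref{lem:210803-11} (this is exactly where $n+K+1>n/q$ is used). The decisive point is to group the cubes into the unit blocks $R_k$ and apply the Morrey bound block by block, rather than estimating each term $\lambda_Q|Q|^{1/q}$ separately against $\|F\|_{\mathcal{M}^p_q}$, which would be far too lossy. The remaining ingredients---the two geometric comparisons for $1+|\cdot|^{n+1}$ (on $3Q$ versus $Q$, and on $Q$ versus its host block $R_k$) and the fact that for $m\ge0$ the cubes of $\mathcal{D}_m$ tile the unit cubes $R_k$---are routine.
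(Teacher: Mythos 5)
Your proof is correct and follows essentially the same route as the paper: the moment bound of Lemma~\ref{lem:210803-11}, the $\ell^q\hookrightarrow\ell^1$ passage for $q\le1$ carried out on unit-scale groupings of the cubes in ${\mathcal D}_m$ tested against the Morrey norm, the summable lattice decay of $\varphi$, and the surplus exponent $n+K+1-n/q>0$ to sum the geometric series in $m$. Your organization via the host unit cubes $R_k$ is marginally tidier than the paper's (which groups by $|c(Q)-\tilde m|\le n$ and treats the finitely many cubes with $\overline{3Q}\ni0$ separately), but the mechanism is identical.
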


\begin{proof}
Fix $m\ge1$.
To prove \eqref{eq:210924-1}, we use the fact that for each $\tilde{m}\in{\mathbb Z}^n$,
\begin{align*}%\label{eq:210827-1}
\left\|
\sum_{Q\in{\mathcal D}({\mathbb R}^n)}\lambda_Q\chi_Q
\right\|_{{\mathcal M}^p_q}
&\gtrsim
\left\|\left(
\sum_{Q\in{\mathcal D}_m({\mathbb R}^n),\,|c(Q)-\tilde{m}|\le n}\lambda_Q\chi_Q
\right)\chi_{\tilde{m}+[-n,n]^n}\right\|_{L^q}\\
&=
2^{-\frac{mn}q}
\left(
\sum_{Q\in{\mathcal D}_m({\mathbb R}^n),\,|c(Q)-\tilde{m}|\le n}
\lambda_Q^q
\right)^{\frac1q}\\
&\ge
2^{-\frac{mn}q}
\sum_{Q\in{\mathcal D}_m({\mathbb R}^n),\,|c(Q)-\tilde{m}|\le n}\lambda_Q.
\end{align*}
In particular, for all $R\in{\mathcal D}_m({\mathbb R}^n)$,
\begin{equation*}
\left\|
\sum_{Q\in{\mathcal D}({\mathbb R}^n)}\lambda_Q\chi_Q
\right\|_{{\mathcal M}^p_q}
\gtrsim
2^{-\frac{mn}q}\lambda_R.
\end{equation*}
We remark that for each $m\ge1$ and $\tilde{m}\in{\mathbb Z}^n$,
\[
\sharp\{Q\in{\mathcal D}_m({\mathbb R}^n)\,:\,\overline{3Q}\ni0\}
=4^n.
\]
It follows from Lemma \ref{lem:210803-11} that
\begin{align}\label{eq:210923-1}
\begin{split}
\sum_{Q\in{\mathcal D}_m({\mathbb R}^n),\,\overline{3Q}\ni0}
\lambda_Q|\langle a_Q,\varphi\rangle|
&\lesssim
2^{-m(n+K+1)}
\sum_{Q\in{\mathcal D}_m({\mathbb R}^n),\,\overline{3Q}\ni0}\lambda_Q\\
&\lesssim
2^{-m\left(n+K+1-\frac nq\right)}
\left\|
\sum_{Q\in{\mathcal D}({\mathbb R}^n)}\lambda_Q\chi_Q
\right\|_{{\mathcal M}^p_q}.
\end{split}
\end{align}
In addition, setting
\[
{\mathcal D}_{m,\tilde{m}}({\mathbb R}^n)
:=
\{Q\in{\mathcal D}_m({\mathbb R}^n)\,:\,|c(Q)-\tilde{m}|\le n\}
\]
for each $m\ge1$ and $\tilde{m}\in{\mathbb Z}^n$, we have
\[
{\mathcal D}_m({\mathbb R}^n)
=
\bigcup_{\tilde{m}\in{\mathbb Z}^n}{\mathcal D}_{m,\tilde{m}}({\mathbb R}^n).
\]
Then, there exists a mapping $\iota_m:{\mathcal D}_m({\mathbb R}^n)\to{\mathbb Z}^n$ such that $Q\in{\mathcal D}_{m,\iota_m(Q)}({\mathbb R}^n)$; therefore
\begin{align}\label{eq:210923-2}
\begin{split}
&\sum_{Q\in{\mathcal D}_m({\mathbb R}^n),\,\overline{3Q}\not\ni0}
\lambda_Q|\langle a_Q,\varphi\rangle|\\
&\lesssim
2^{-m(n+K+1)}
\sum_{\tilde{m}\in{\mathbb Z}^n}
\sum_{Q\in{\mathcal D}_m({\mathbb R}^n),\,\iota_m(Q)=\tilde{m}}
\lambda_Q\cdot\sup_{y\in3Q}\frac1{1+|y|^{n+1}}\\
&\lesssim
2^{-m(n+K+1)}
\sum_{\tilde{m}\in{\mathbb Z}^n}\frac1{1+|\tilde{m}|^{n+1}}
\sum_{Q\in{\mathcal D}_{m,\tilde{m}}({\mathbb R}^n)}\lambda_Q\\
&\lesssim
2^{-m\left(n+K+1-\frac nq\right)}
\left\|
\sum_{Q\in{\mathcal D}({\mathbb R}^n)}\lambda_Q\chi_Q
\right\|_{{\mathcal M}^p_q}.
\end{split}
\end{align}
Because
\[
n+K+1>\frac nq,
\]
then we obtain the desired result.
\end{proof}

\subsection{Proofs of Theorems \ref{thm:200308-3} and \ref{thm:200929-1}: convergence of $f=\sum_{j=1}^\infty\lambda_ja_j$}\label{ss:Proof of Thm1}

First, we prove the convergence of
\begin{equation*}
f=\sum_{j=1}^\infty\lambda_ja_j
\quad \text{in ${\mathcal S}'({\mathbb R}^n)$.}
\end{equation*}

We start with an important reduction.
For each $J\in{\mathbb N}$, we take any cube $Q(J) \in {\mathcal D}({\mathbb R}^n)$ with mimimal volume such that $Q_J\subset3Q(J)$, and we set
\[
{\mathcal E}_Q:=\{j \in {\mathbb N}\,:\,Q=Q(j)\}
\]
and
\begin{equation*}
\lambda_Q:=\sum_{j\in{\mathcal E}_Q}\lambda_j, \quad
a_Q
:=
\begin{cases}
0, & \lambda_Q=0,\\
\displaystyle
\frac1{\lambda_Q}\sum_{j\in{\mathcal E}_Q}\lambda_ja_j, & \lambda_Q\ne0.
\end{cases}
\end{equation*}
Note that $\{{\mathcal E}_Q\}_{Q\in{\mathcal D}({\mathbb R}^n)}$ is pairwise disjoint.
Then, $\{a_Q\}_{Q\in{\mathcal D}({\mathbb R}^n)}$ and $\{\lambda_Q\}_{Q\in{\mathcal D}({\mathbb R}^n)}$ satisfy
\begin{align*}
\|a_Q\|_{{\mathcal M}^s_1}
\le
\frac1{\lambda_Q}\sum_{j\in{\mathcal E}_Q}\lambda_j\|a_j\|_{{\mathcal M}^s_1}
\le
\frac1{\lambda_Q}\sum_{j\in{\mathcal E}_Q}\lambda_j|Q_j|^{\frac1s}
\le
|3Q|^{\frac1s}.
\end{align*}
Taking $\theta\in(1/v,\infty)$, by the fact that $\chi_{Q(J)}\lesssim_nM\chi_{Q_J}$ for $J\in{\mathbb N}$, we have
\begin{align*}
\left\|
\left(\sum_{Q\in{\mathcal D}({\mathbb R}^n)}(\lambda_Q\chi_Q)^v\right)^{\frac1v}
\right\|_{{\mathcal M}^p_{q,r}}
&\lesssim
\left\|
\left(
\sum_{Q\in{\mathcal D}({\mathbb R}^n)}
\left(\sum_{j\in{\mathcal E}_Q}\lambda_j(M\chi_{Q_j})^\theta\right)^v
\right)^{\frac1v}
\right\|_{{\mathcal M}^p_{q,r}}\\
&\le
\left\|
\left(
\sum_{Q\in{\mathcal D}({\mathbb R}^n)}\sum_{j\in{\mathcal E}_Q}
\left(M\left[\lambda_j^{\frac1\theta}\chi_{Q_j}\right]\right)^{\theta v}
\right)^{\frac1{\theta v}}
\right\|_{{\mathcal M}^{\theta p}_{\theta q,\theta r}}^\theta.
\end{align*}
Then, by Proposition \ref{prop:200324-1},
\begin{align*}
\left\|
\left(\sum_{Q\in{\mathcal D}({\mathbb R}^n)}(\lambda_Q\chi_Q)^v\right)^{\frac1v}
\right\|_{{\mathcal M}^p_{q,r}}
&\lesssim
\left\|
\left(
\sum_{Q\in{\mathcal D}({\mathbb R}^n)}\sum_{j\in{\mathcal E}_Q}
(\lambda_j\chi_{Q_j})^v
\right)^{\frac1{\theta v}}
\right\|_{{\mathcal M}^{\theta p}_{\theta q,\theta r}}^\theta\\
&=
\left\|
\left(
\sum_{j=1}^\infty(\lambda_j\chi_{Q_j})^v
\right)^{\frac1{\theta v}}
\right\|_{{\mathcal M}^{\theta p}_{\theta q,\theta r}}^\theta
<\infty.
\end{align*}
Hence, we may assume
\begin{equation*}
\{a_Q\}_{Q\in{\mathcal D}}
\subset
\begin{cases}
{\rm W}{\mathcal M}^s_t({\mathbb R}^n)\cap{\mathcal P}_{d_v}({\mathbb R}^n)^\perp,
& 1<t\le s<\infty,\\
{\mathcal M}^s_1({\mathbb R}^n)\cap{\mathcal P}_{d_v}({\mathbb R}^n)^\perp,
& 1=t\le s<\infty,
\end{cases}
\quad
\{\lambda_Q\}_{Q\in{\mathcal D}} \subset[0,\infty),
\end{equation*}
with ${\rm supp}(a_Q)\subset3Q$ for $Q\in{\mathcal D}({\mathbb R}^n)$
instead of
\begin{equation*}
\{a_j\}_{j=1}^\infty
\subset
\begin{cases}
{\rm W}{\mathcal M}^s_t({\mathbb R}^n)\cap{\mathcal P}_{d_v}({\mathbb R}^n)^\perp,
& 1<t\le s<\infty,\\
{\mathcal M}^s_1({\mathbb R}^n)\cap{\mathcal P}_{d_v}({\mathbb R}^n)^\perp,
& 1=t\le s<\infty,
\end{cases}
\quad
\{\lambda_j\}_{j=1}^\infty \subset[0,\infty).
\end{equation*}
Then, it suffices to show that
\begin{equation}\label{eq:210803-1}
\sum_{Q\in{\mathcal D}({\mathbb R}^n)}\lambda_Q|\langle a_Q,\varphi\rangle|
=
\sum_{m=-\infty}^0
\sum_{Q\in{\mathcal D}_m({\mathbb R}^n)}\lambda_Q|\langle a_Q,\varphi\rangle|
+
\sum_{m=1}^\infty
\sum_{Q\in{\mathcal D}_m({\mathbb R}^n)}\lambda_Q|\langle a_Q,\varphi\rangle|
<\infty.
\end{equation}

First, we estimate the first part of \eqref{eq:210803-1}.
Fix $m\le0$.
For each $Q\in{\mathcal D}_m({\mathbb R}^n)$, $\overline{3Q}\not\ni0$ implies that $|y|\ge\ell(Q)$ for all $y\in3Q$, and then
\begin{align*}
|\langle a_Q,\varphi\rangle|
&\lesssim
\int_{3Q}|a_Q(x)|\,{\rm d}x\sup_{y\in 3Q}\frac1{1+|y|^{2n+1-\frac ns}}\\
&\lesssim
\|a_Q\|_{{\mathcal M}^s_1}\cdot|Q|^{-\frac1s+1}\sup_{y\in 3Q}\frac1{1+|y|^{2n+1-\frac ns}}
\lesssim
|Q|^{\frac1s}\sup_{y\in 3Q}\frac1{1+|y|^{n+1}}.
\end{align*}
It follows that
\begin{align*}
\sum_{Q\in{\mathcal D}_m({\mathbb R}^n),\,\overline{3Q}\not\ni0}
\lambda_Q|\langle a_Q,\varphi\rangle|
&\lesssim
\sum_{Q\in{\mathcal D}_m({\mathbb R}^n),\,\overline{3Q}\not\ni0}
\lambda_Q\cdot|Q|^{\frac1s}\sup_{y\in 3Q}\frac1{1+|y|^{n+1}}\\
&\lesssim
2^{-\frac{mn}s}
\sum_{\tilde{m}\in{\mathbb Z}^n}\frac1{1+|\tilde{m}|^{n+1}}
\sum_{\substack{Q\in{\mathcal D}_m({\mathbb R}^n),\,\overline{3Q}\not\ni0\\ |c(Q)-\tilde{m}|\le n}}
\lambda_Q.
\end{align*}
Meanwhile, if $\overline{3Q}\ni0$, by Lemma \ref{lem:211213-1} (2),
\begin{align*}
|\langle a_Q,\varphi\rangle|
\lesssim_\varphi
\|a_Q\|_{{\mathcal M}^s_1}
\lesssim
|Q|^{\frac1s}.
\end{align*}
Thus,
\begin{align*}
\sum_{Q\in{\mathcal D}_m({\mathbb R}^n),\,\overline{3Q}\ni0}
\lambda_Q|\langle a_Q,\varphi\rangle|
\lesssim
\sum_{Q\in{\mathcal D}_m({\mathbb R}^n),\,\overline{3Q}\not\ni0}
\lambda_Q\cdot|Q|^{\frac1s}
\le
4^n2^{-\frac{mn}s+\frac{mn}p}
\sup_{Q\in{\mathcal D}({\mathbb R}^n)}\lambda_Q|Q|^{\frac1p}.
\end{align*}
Note that for each $R\in{\mathcal D}({\mathbb R}^n)$,
\begin{align*}
\left\|
\left(\sum_{Q\in{\mathcal D}({\mathbb R}^n)}(\lambda_Q\chi_Q)^v\right)^{\frac1v}
\right\|_{{\mathcal M}^p_{q,r}}
\ge
\|\lambda_R\chi_R\|_{{\mathcal M}^p_{q,r}}
=
\left(\frac qr\right)^{\frac1r}\lambda_R|R|^{\frac1p}
\end{align*}
by Proposition \ref{prop:Mpqr-indicator}.
We conclude that
\begin{align}\label{eq:210808-1}
\begin{split}
\sum_{m=-\infty}^0
\sum_{Q\in{\mathcal D}_m({\mathbb R}^n)}\lambda_Q|\langle a_Q,\varphi\rangle|
&\lesssim
\sum_{m=-\infty}^02^{-\frac{mn}s+\frac{mn}p}
\left\|
\left(\sum_{Q\in{\mathcal D}({\mathbb R}^n)}(\lambda_Q\chi_Q)^v\right)^{\frac1v}
\right\|_{{\mathcal M}^p_{q,r}}\\
&\lesssim
\left\|
\left(\sum_{Q\in{\mathcal D}({\mathbb R}^n)}(\lambda_Q\chi_Q)^v\right)^{\frac1v}
\right\|_{{\mathcal M}^p_{q,r}}.
\end{split}
\end{align}

Note that
\begin{align*}
n+d_v+1-\frac n{q_0}
>
n+\left(\frac nv-n-1\right)+1-\frac n{q_0}
=
\frac nv-\frac n{q_0}
\ge0.
\end{align*}
Thus, there exists $\varepsilon\in(0,q_0)$ such that
\[
n+d_v+1-\frac n{q_0-\varepsilon}>0,
\]
where $q_0:=\min(1,q)$.
Hence, by Lemma \ref{lem:210924-1}, the second part of \eqref{eq:210803-1} can be estimated as follows:
\begin{align}\label{eq:210808-2}
\begin{split}
\sum_{m=1}^\infty
\sum_{Q\in{\mathcal D}_m({\mathbb R}^n)}\lambda_Q|\langle a_Q,\varphi\rangle|
\lesssim
\left\|
\sum_{Q\in{\mathcal D}({\mathbb R}^n)}\lambda_Q\chi_Q
\right\|_{{\mathcal M}^p_{q_0-\varepsilon}}
\lesssim
\left\|
\left(\sum_{Q\in{\mathcal D}({\mathbb R}^n)}(\lambda_Q\chi_Q)^v\right)^{\frac1v}
\right\|_{{\mathcal M}^p_{q,r}},
\end{split}
\end{align}
where in the last inequality, we used the embedding
$
{\mathcal M}^p_{q,r}({\mathbb R}^n)\hookrightarrow
{\mathcal M}^p_{q_0-\varepsilon,q_0-\varepsilon}({\mathbb R}^n)
=
{\mathcal M}^p_{q_0-\varepsilon}({\mathbb R}^n)
$
(see Proposition \ref{prop:Mpqr-embedding}).
Combining these estimates \eqref{eq:210808-1} and \eqref{eq:210808-2}, we finish the proof of \eqref{eq:210803-1}.

\subsection{Proofs of Theorems \ref{thm:200308-3} and \ref{thm:200929-1}: \eqref{eq:200308-3.3} and \eqref{eq:200929-1.3}}\label{ss: proof atom}

To prove the estimates of \eqref{eq:200308-3.3} and \eqref{eq:200929-1.3} in Theorems \ref{thm:200308-3} and \ref{thm:200929-1}, respectively, we use the following lemma, whose proof is similar to that of Lemma \ref{lem:210803-11}.

\begin{lemma}[{\cite[(5.2)]{NaSa12}}]\label{lem:210723-1}
If $\{a_j\}_{j=1}^\infty$ satisfies the same assumptions as in Theorems {\rm \ref{thm:200308-3}} and {\rm \ref{thm:200929-1}}, then
\begin{equation*}%\label{eq:191211-1}
{\mathcal M}a_j(x)
\lesssim
\chi_{3Q_j}(x)Ma_j(x)+M\chi_{Q_j}(x)^{\frac{n+d_v+1}{n}},
\quad x\in{\mathbb R}^n.
\end{equation*}
\end{lemma}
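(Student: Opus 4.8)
The plan is to estimate the grand maximal function $\mathcal{M}a_j$ pointwise, splitting $\mathbb{R}^n$ into the region near the cube, $3Q_j$, and the region far from it. Near $Q_j$, the crude bound $\mathcal{M}a_j(x)\lesssim Ma_j(x)$ for $x\in 3Q_j$ is immediate from the definition of the grand maximal operator together with the pointwise domination of convolution averages against $\mathcal{F}_N$-functions by the Hardy–Littlewood maximal operator; this accounts for the first term $\chi_{3Q_j}(x)Ma_j(x)$. The heart of the matter is the far region $x\notin 3Q_j$, where we must exploit the vanishing moments $a_j\in\mathcal{P}_{d_v}(\mathbb{R}^n)^\perp$ to gain decay.

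For $x\notin 3Q_j$, fix $\psi\in\mathcal{F}_N$ and $t>0$, and consider $t^{-n}\psi(t^{-1}\cdot)\ast a_j(x)=\int_{Q_j}t^{-n}\psi\big(\tfrac{x-y}{t}\big)a_j(y)\,dy$. Using $\langle x^\alpha a_j\rangle=0$ for $|\alpha|\le d_v$, I would subtract the degree-$d_v$ Taylor polynomial of $y\mapsto t^{-n}\psi\big(\tfrac{x-y}{t}\big)$ around $c(Q_j)$, exactly as in the proof of Lemma \ref{lem:210803-11}, and apply the mean value theorem to the remainder. This produces a factor $\ell(Q_j)^{d_v+1}$ from $(y-c(Q_j))^\beta$ with $|\beta|=d_v+1$, a factor $t^{-n-d_v-1}$ from the derivatives of $\psi$, and the rapid decay of $\psi$ contributes $\big(1+\tfrac{|x-c(Q_j)|}{t}\big)^{-L}$ for $L$ as large as we like (since $N$ was chosen large). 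Combining with $\int_{Q_j}|a_j|\lesssim |Q_j|^{1-1/s}\|a_j\|_{\mathcal{M}^s_1}\lesssim |Q_j|$ (or the analogous weak-Morrey estimate via Lemma \ref{lem:Lpq-Holder} in the $t>1$ case), one gets
\begin{equation*}
\big|t^{-n}\psi(t^{-1}\cdot)\ast a_j(x)\big|
\lesssim
\frac{\ell(Q_j)^{n+d_v+1}\,t^{-n-d_v-1}}{\big(1+|x-c(Q_j)|/t\big)^{L}}.
\end{equation*}

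Taking the supremum over $t>0$ and $\psi\in\mathcal{F}_N$: for $t\gtrsim |x-c(Q_j)|$ the bound is largest at $t\sim|x-c(Q_j)|\sim |x-c(Q_j)|$, giving $\ell(Q_j)^{n+d_v+1}|x-c(Q_j)|^{-n-d_v-1}$; for $t\lesssim |x-c(Q_j)|$ the extra decay in $t$ is absorbed by choosing $L>n+d_v+1$. Since $x\notin 3Q_j$ means $|x-c(Q_j)|\sim |x-c(Q_j)|+\ell(Q_j)$, this last expression is comparable to $\big(M\chi_{Q_j}(x)\big)^{(n+d_v+1)/n}$, because $M\chi_{Q_j}(x)\sim \big(\tfrac{\ell(Q_j)}{\ell(Q_j)+|x-c(Q_j)|}\big)^n$ off $Q_j$. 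That yields the second term and completes the bound $\mathcal{M}a_j(x)\lesssim \chi_{3Q_j}(x)Ma_j(x)+M\chi_{Q_j}(x)^{(n+d_v+1)/n}$.

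The main obstacle is bookkeeping the two-sided optimization over $t$ in the far region and making sure the moment cancellation is legitimately applicable — i.e. that $\langle\cdot\rangle^{d_v}a_j\in L^1$, which follows from $\mathrm{supp}(a_j)\subset Q_j$ and $a_j\in\mathcal{M}^s_1$, and that the Taylor-remainder argument is uniform in $t$ and $\psi\in\mathcal{F}_N$. The case distinction between $t=1$ (atoms in $\mathcal{M}^s_1$, Theorem \ref{thm:200929-1}) and $1<t\le s$ (atoms in $\mathrm{W}\mathcal{M}^s_t$, Theorem \ref{thm:200308-3}) only affects how one bounds $\int_{Q_j}|a_j|$: in the former it is the Morrey condition directly, in the latter one uses the Hölder inequality for Lorentz norms (Lemma \ref{lem:Lpq-Holder}) as in the proof of Theorem \ref{thm:200308-1}. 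Everything else is identical to the argument behind Lemma \ref{lem:210803-11}, so I would simply indicate the modifications rather than rewrite it in full.
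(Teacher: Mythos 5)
Your proposal is correct and follows exactly the argument the paper points to: the near-region bound $\mathcal{M}a_j\lesssim Ma_j$ on $3Q_j$, and in the far region the Taylor-remainder/moment-cancellation computation of Lemma \ref{lem:210803-11} applied uniformly to $t^{-n}\psi(t^{-1}\cdot)$ with $\psi\in{\mathcal F}_N$, yielding $\ell(Q_j)^{n+d_v+1}|x-c(Q_j)|^{-(n+d_v+1)}\sim (M\chi_{Q_j})^{(n+d_v+1)/n}$ off $3Q_j$, with $\int_{Q_j}|a_j|\lesssim|Q_j|$ obtained from the Morrey condition or the Lorentz--H\"older inequality according to the case. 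The only point worth making explicit is that the optimization over $t$ in the far region requires $N>n+d_v+1$, which is guaranteed since $N$ is fixed large.
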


Let us show Theorem \ref{thm:200308-3}.
Using Proposition \ref{prop:200308-2} and Lemma \ref{lem:210723-1}, we have
\begin{align*}
\|f\|_{H{\mathcal M}^p_{q,r}}
&\sim
\|{\mathcal M}f\|_{{\mathcal M}^p_{q,r}}
\le
\left\|\sum_{j=1}^\infty \lambda_j {\mathcal M}a_j\right\|_{{\mathcal M}^p_{q,r}}\\
&\lesssim
\left\|
\sum_{j=1}^\infty\lambda_j
\left(\chi_{3Q_j}Ma_j+(M\chi_{Q_j})^{\frac{n+d_v+1}{n}}\right)
\right\|_{{\mathcal M}^p_{q,r}}\\
&\lesssim
\left\|\sum_{j=1}^\infty\lambda_j\chi_{3Q_j}Ma_j\right\|_{{\mathcal M}^p_{q,r}}
+
\left\|
\sum_{j=1}^\infty\lambda_j(M\chi_{Q_j})^{\frac{n+d_v+1}{n}}
\right\|_{{\mathcal M}^p_{q,r}}
=:
I_1+I_2.
\end{align*}

First, we consider $I_1$.
We note that for each $j\in{\mathbb N}$, owing to the ${\rm W}{\mathcal M}^s_t({\mathbb R}^n)={\mathcal M}^s_{t,\infty}({\mathbb R}^n)$-boundedness of $M$ (see Proposition \ref{prop:190622-2}),
%\begin{align}\label{align:200929-1}
%\|
%\chi_{3Q_j}Ma_j
%\|_{{\rm W}{\mathcal M}^s_t}
%\lesssim
%\|a_j\|_{{\rm W}{\mathcal M}^s_t}
%\le
%|3Q_j|^{\frac1s}.
%\end{align}
by applying Proposition \ref{prop:200324-1} and Theorem \ref{thm:200308-1} and using the fact that $\chi_{3Q_j}\le3^nM\chi_{Q_j}$ for each $j\in{\mathbb N}$, we have
\begin{align*}
I_1
&\lesssim
\left\|
\left(\sum_{j=1}^\infty(\lambda_j\chi_{3Q_j})^v\right)^{\frac1v}
\right\|_{{\mathcal M}^p_{q,r}}
\lesssim
\left\|
\left(\sum_{j=1}^\infty\lambda_j^v(M\chi_{Q_j})^2\right)^{\frac1v}
\right\|_{{\mathcal M}^p_{q,r}}\\
&\lesssim
\left\|
\left(\sum_{j=1}^\infty(\lambda_j\chi_{Q_j})^v\right)^{\frac1v}
\right\|_{{\mathcal M}^p_{q,r}}.
\end{align*}

Next, we consider $I_2$. 
Set
\begin{equation*}
P:=\frac{n+d_v+1}np, \quad
Q:=\frac{n+d_v+1}nq, \quad
\text{and} \quad
R:=\frac{n+d_v+1}nr.
\end{equation*}
Then, by Proposition \ref{prop:200324-1} and the embedding $\ell^v({\mathbb N})\hookrightarrow\ell^1({\mathbb N})$, we obtain
\begin{align*}
I_2
&=
\left\|
\left[
\sum_{j=1}^\infty\lambda_j(M\chi_{Q_j})^{\frac{n+d_v+1}n}
\right]^{\frac n{n+d_v+1}}
\right\|_{{\mathcal M}^P_{Q,R}}^{\frac{n+d_v+1}n}
\lesssim
\left\|\sum_{j=1}^\infty\lambda_j \chi_{Q_j}\right\|_{{\mathcal M}^p_{q,r}}\\
&\le
\left\|
\left(\sum_{j=1}^\infty(\lambda_j \chi_{Q_j})^v\right)^{\frac1v}
\right\|_{{\mathcal M}^p_{q,r}}.
\end{align*}
Thus, we obtain the desired result.\\

Similarly, because $M$ satisfies the weak-type boundedness on ${\mathcal M}^s_1({\mathbb R}^n)$ (see Proposition \ref{prop:200929-1}) in the estimate of $I_1$, we can prove Theorem \ref{thm:200929-1}.

\section{Proof of Theorem \ref{thm:200308-4}}\label{s5}

To prove Theorem \ref{thm:200308-4}, we use a new approach provided in \cite[Subsection 4.3]{NOSS20} and the following lemma, as given in \cite[Exercise 3.34]{Sawano18}.

\begin{lemma}\label{lem:210407-1}
Let $0<q\le p<\infty$, $0<r\le\infty$, $K\in{\mathbb N}$, and $0<v<\infty$, and let $f\in H{\mathcal M}^p_{q,r}({\mathbb R}^n)\cap L_{\rm loc}^1({\mathbb R}^n)$.
Then, we can find $\{a_j\}_{j=1}^\infty\subset L^\infty({\mathbb R}^n)\cap{\mathcal P}_K^\perp({\mathbb R}^n)$ and a sequence $\{Q_j\}_{j=1}^\infty$ of cubes such that
\begin{itemize}
\item[{\rm (1)}] ${\rm supp}(a_j)\subset Q_j$,
\item[{\rm (2)}] $\displaystyle f=\sum_{j=1}^\infty a_j$ in ${\mathcal S}'({\mathbb R}^n)$, and
\item[{\rm (3)}] $\displaystyle\left(\sum_{j=1}^\infty(\|a_j\|_{L^\infty}\chi_{Q_j})^v\right)^{\frac1v}\lesssim{\mathcal M}f$.
\end{itemize}
\end{lemma}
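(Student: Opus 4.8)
The plan is to prove Lemma~\ref{lem:210407-1} by the grand maximal Calder\'on--Zygmund decomposition: cut $f$ at the level sets of the grand maximal function and apply the construction of \cite[Exercise 3.34]{Sawano18} and \cite[Subsection 4.3]{NOSS20}. The point to keep in mind is that (3) is a purely pointwise estimate, so the ambient Morrey--Lorentz norm never enters; I only need that ${\mathcal M}f$ is finite a.e.\ (which holds since ${\mathcal M}f\in{\mathcal M}^p_{q,r}({\mathbb R}^n)$ by Proposition~\ref{prop:200308-2}) and $f\in L^1_{\rm loc}({\mathbb R}^n)$. Since enlarging the defining integer $N$ of ${\mathcal M}$ only shrinks ${\mathcal M}f$ (because ${\mathcal F}_{N'}\subset{\mathcal F}_N$ for $N'\ge N$) while keeping $\|{\mathcal M}f\|_{{\mathcal M}^p_{q,r}}\sim\|f\|_{H{\mathcal M}^p_{q,r}}$, I may assume $N$ is large relative to $K$; a final bound by this smaller ${\mathcal M}f$ is then only stronger than what is claimed. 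For $k\in{\mathbb Z}$ set $\Omega_k\equiv\{{\mathcal M}f>2^k\}$; these are open, nested, and $\Omega_k\ne{\mathbb R}^n$ for $k$ large. Fix a Whitney decomposition $\Omega_k=\bigcup_iQ_{k,i}$ into dyadic cubes with $\ell(Q_{k,i})\sim{\rm dist}(Q_{k,i},\Omega_k^c)$, $3Q_{k,i}\subset\Omega_k$, $\sum_i\chi_{3Q_{k,i}}\lesssim_n1$, together with a subordinate smooth partition of unity $\zeta_{k,i}$ with $\chi_{Q_{k,i}}\lesssim\zeta_{k,i}\le\chi_{3Q_{k,i}}$, $\sum_i\zeta_{k,i}=\chi_{\Omega_k}$, and $\|\partial^\alpha\zeta_{k,i}\|_{L^\infty}\lesssim\ell(Q_{k,i})^{-|\alpha|}$.

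For each pair $(k,i)$ let $c_{k,i}\in{\mathcal P}_K({\mathbb R}^n)$ be the unique polynomial with $\int(f-c_{k,i})\zeta_{k,i}\,x^\alpha\,{\rm d}x=0$ for all $|\alpha|\le K$ --- well defined because $f\in L^1_{\rm loc}$ and $\zeta_{k,i}$ is compactly supported --- and set $b_{k,i}\equiv(f-c_{k,i})\zeta_{k,i}$ and $g_k\equiv f-\sum_ib_{k,i}$; then ${\rm supp}(b_{k,i})\subset3Q_{k,i}$ and $b_{k,i}\in{\mathcal P}_K^\perp({\mathbb R}^n)$. The technical heart is the estimate $\|g_k\|_{L^\infty}\lesssim2^k$, which rests on two uses of $f\in L^1_{\rm loc}$: pairing $f$ against a fixed nonnegative bump of unit mass gives $|f|\le{\mathcal M}f$ at every Lebesgue point, hence a.e., so $|g_k|=|f|\le2^k$ a.e.\ on $\Omega_k^c$; and on each $3Q_{k,i}$ the standard equivalence of $L^1$- and $L^\infty$-norms on ${\mathcal P}_K$ over a cube gives $\|c_{k,i}\|_{L^\infty(3Q_{k,i})}\lesssim\fint_{3Q_{k,i}}|f|\lesssim{\mathcal M}f(y_{k,i})\le2^k$ for any $y_{k,i}\in\Omega_k^c$ with ${\rm dist}(y_{k,i},3Q_{k,i})\lesssim\ell(Q_{k,i})$, the middle inequality because the average of $|f|$ over $3Q_{k,i}$ is dominated by a smooth average of $f$ centered at $y_{k,i}$. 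Combined with the bounded overlap $\sum_i\chi_{3Q_{k,i}}\lesssim1$ this yields $\|g_k\|_{L^\infty}\lesssim2^k$, and along the way also $|g_k|\lesssim{\mathcal M}f$ a.e.

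As $k\to\infty$ one has $g_k\to f$, and as $k\to-\infty$ one has $g_k\to0$, both in ${\mathcal S}'({\mathbb R}^n)$: the first because the bad parts are carried by $\Omega_k$, whose local mass tends to $0$; the second by dominated convergence from $|g_k|\lesssim\min(2^k,{\mathcal M}f)$ a.e., $g_k\to0$ a.e., and the fact that a function in $L^1_{\rm loc}({\mathbb R}^n)\cap{\mathcal S}'({\mathbb R}^n)$ pairs absolutely with Schwartz functions (in the spirit of Lemma~\ref{lem:211213-1}). Hence $f=\sum_{k\in{\mathbb Z}}(g_{k+1}-g_k)$ in ${\mathcal S}'({\mathbb R}^n)$, and a second polynomial subtraction --- legitimate since $\Omega_{k+1}\subset\Omega_k$, so each $3Q_{k+1,j}$ meets only $O(1)$ of the $3Q_{k,i}$ --- rewrites $g_{k+1}-g_k=\sum_iA_{k,i}$ with ${\rm supp}(A_{k,i})\subset3Q_{k,i}$, $A_{k,i}\in{\mathcal P}_K^\perp({\mathbb R}^n)$, and $\|A_{k,i}\|_{L^\infty}\lesssim2^k$. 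Relabelling $\{A_{k,i}\}$ as $\{a_j\}_{j=1}^\infty$ and $\{3Q_{k,i}\}$ as $\{Q_j\}_{j=1}^\infty$ gives (1) and (2), while for $0<v<\infty$ the bounded overlap and then the geometric series give
\begin{align*}
\sum_j(\|a_j\|_{L^\infty}\chi_{Q_j}(x))^v
&\lesssim
\sum_k\Big(2^k\sum_i\chi_{3Q_{k,i}}(x)\Big)^v
\lesssim
\sum_k2^{kv}\chi_{\Omega_k}(x)\\
&=
\sum_{k\,:\,2^k<{\mathcal M}f(x)}2^{kv}
\lesssim
{\mathcal M}f(x)^v,
\end{align*}
and taking $v$-th roots yields (3).

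The main obstacle is the good-part bound $\|g_k\|_{L^\infty}\lesssim2^k$ --- concretely, dominating the correction polynomials $c_{k,i}$ (and the analogous second corrections in the telescoping) by values of ${\mathcal M}f$ on $\Omega_k^c$ --- together with the bookkeeping that turns each $g_{k+1}-g_k$ into genuine atoms with vanishing moments up to order $K$. These steps are classical (Fefferman--Stein; see \cite[Exercise 3.34]{Sawano18}), and it is precisely there that the hypotheses $f\in L^1_{\rm loc}({\mathbb R}^n)$ and $N\gg K$ are used; everything else, in particular the pointwise estimate (3), is then formal.
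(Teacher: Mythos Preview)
Your proof is correct and follows the classical Fefferman--Stein grand-maximal Calder\'on--Zygmund scheme. In the paper this lemma is not proved at all: it is simply quoted ``as given in \cite[Exercise~3.34]{Sawano18}'', and your argument is precisely the construction that reference encodes (level sets of ${\mathcal M}f$, Whitney cubes, polynomial corrections for the moment conditions, telescoping $f=\sum_k(g_{k+1}-g_k)$, then the pointwise bound via bounded overlap and a geometric sum). One small remark: your two limits $g_k\to f$ $(k\to\infty)$ and $g_k\to0$ $(k\to-\infty)$ can both be handled by a single dominated-convergence argument once you have $|g_k|\lesssim{\mathcal M}f$ a.e.\ and $g_k\to f$ (resp.\ $0$) a.e., using that ${\mathcal M}f\in{\mathcal M}^p_{q,r}({\mathbb R}^n)\hookrightarrow{\mathcal M}^p_1({\mathbb R}^n)$ pairs absolutely with Schwartz functions (Lemma~\ref{lem:211213-1}); this slightly streamlines the limit $k\to\infty$ compared to your ``local mass of $\Omega_k$'' justification.
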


\begin{proof}[Proof of Theorem {\rm \ref{thm:200308-4}}]
It suffices to prove the case of $v=1$; the case of $v>0$ can be proved similarly.
Let $f \in H{\mathcal M}^p_{q,r}({\mathbb R}^n)$.
Fix $t>0$.
Because ${\mathcal D}({\mathbb R}^n)$ is a countable set, applying Lemma \ref{lem:210407-1} to $e^{t\Delta}f\in H{\mathcal M}^p_{q,r}({\mathbb R}^n)\cap L_{\rm loc}^1({\mathbb R}^n)$ for
\[
\{3Q\}_{Q\in{\mathcal D}({\mathbb R}^n)}, \quad
\{\lambda^t_Q\}_{Q\in{\mathcal D}({\mathbb R}^n)}, \quad
\{\lambda_Qa^t_Q\}_{Q\in{\mathcal D}({\mathbb R}^n)}
\]
instead of
\[
\{Q_j\}_{j=1}^\infty, \quad
\{\|a_j\|_{L^\infty}\}_{j=1}^\infty, \quad
\{a_j\}_{j=1}^\infty,
\]
respectively, we can consider the decomposition
$
e^{t\Delta}f=\sum_{Q\in{\mathcal D}({\mathbb R}^n)}\lambda^t_Q a^t_Q 
$
in the topology of ${\mathcal S}'({\mathbb R}^n)$, where $a^t_Q \in {\mathcal P}_K^\perp({\mathbb R}^n)$, $\lambda^t_Q \ge 0$, and
\[
|a^t_Q| \le \chi_{3Q}, \quad
\left\|\sum_{Q\in{\mathcal D}({\mathbb R}^n)}\lambda^t_Q\chi_{3Q}\right\|_{{\mathcal M}^p_{q,r}} 
\lesssim
\|{\mathcal M}[e^{t\Delta}f]\|_{{\mathcal M}^p_{q,r}}
\lesssim
\|{\mathcal M}f\|_{{\mathcal M}^p_{q,r}}.
\]
By the weak-* compactness of the unit ball of $L^\infty({\mathbb R}^n)$, there exists a sequence $\{t_l\}_{l=1}^\infty$ that converges to $0$ such that both $\lambda_Q=\lim_{l\to\infty}\lambda^{t_l}_Q$ and $a_Q=\lim_{l\to\infty}a^{t_l}_Q$ exist for all $Q \in {\mathcal D}$ in the sense that
\[
\lim_{l\to\infty}\int_{{\mathbb R}^n}a^{t_l}_Q(x)\varphi(x)\,{\rm d}x
=
\int_{{\mathbb R}^n}a_Q(x)\varphi(x)\,{\rm d}x
\]
for all $\varphi \in L^1({\mathbb R}^n)$.
We claim that $f=\sum_{Q\in{\mathcal D}({\mathbb R}^n)}\lambda_Qa_Q$ in the topology of ${\mathcal S}'({\mathbb R}^n)$.
Let $\varphi \in {\mathcal S}(\mathbb{R}^n)$ be a test function.
Then, by Lemma \ref{lem:210910-1} (3), we have
\begin{align*}
\langle f,\varphi\rangle
=
\lim_{l \to \infty}\langle e^{t_l\Delta}f,\varphi \rangle
&=
\lim_{l \to \infty}\sum_{Q\in{\mathcal D}({\mathbb R}^n)}\lambda^{t_l}_Q
\int_{{\mathbb R}^n}a^{t_l}_Q(x)\varphi(x)\,{\rm d}x
\end{align*}
from the definition of convergence in ${\mathcal S}'({\mathbb R}^n)$.
Once we fix $m$, we have
\[
\lambda^{t_l}_Q
\lesssim
2^{\frac{mn}p}\|{\mathcal M}f\|_{{\mathcal M}^p_{q,r}}
\quad \mbox{and} \quad
\left|\int_{{\mathbb R}^n}a^{t_l}_Q(x)\varphi(x)\,{\rm d}x\right|
\le
\int_{3Q}|\varphi(x)|\,{\rm d}x.
\]
Additionally, by the equation
\[
\sum_{Q\in{\mathcal D}_m({\mathbb R}^n)}
2^{\frac{mn}p}\|{\mathcal M}f\|_{{\mathcal M}^p_{q,r}}
\int_{3Q}|\varphi(x)|\,{\rm d}x
=
3^n2^{\frac{mn}p}\|{\mathcal M}f\|_{{\mathcal M}^p_{q,r}}
\|\varphi\|_{L^1}
<\infty,
\]
we can use Fubini's theorem to obtain
\[
\sum_{m\in{\mathbb Z}}\int_{{\mathbb R}^n}
\left(\sum_{Q\in{\mathcal D}_m({\mathbb R}^n)}\lambda^{t_l}_Q a^{t_l}_Q(x)\right)
\varphi(x)\,{\rm d}x
=
\sum_{m\in{\mathbb Z}}\sum_{Q\in{\mathcal D}_m({\mathbb R}^n)}\lambda^{t_l}_Q
\int_{{\mathbb R}^n}a^{t_l}_Q(x)\varphi(x)\,{\rm d}x.
\]
Hereinafter, we use also abbreviation
\begin{equation*}%\label{eq:200512-2}
a_{m,l}
:=
\sum_{Q\in{\mathcal D}_m({\mathbb R}^n)}
\lambda_Q^{t_l}\int_{{\mathbb R}^n}a_Q^{t_l}(x)\varphi(x)\,{\rm d}x,
\end{equation*}
and we fix $0<\varepsilon\ll1$.

When $m\in{\mathbb Z}\cap(-\infty,0]$, we see that
\begin{align*}%\label{eq:210407-1}
|a_{m,l}|
\lesssim
\sum_{Q\in{\mathcal D}_m({\mathbb R}^n)}
2^{\frac{mn}p}\|{\mathcal M}f\|_{{\mathcal M}^p_{q,r}}
\int_{3Q}|\varphi(x)|\,{\rm d}x
\lesssim_\varphi
2^{\frac{mn}p}\|{\mathcal M}f\|_{{\mathcal M}^p_{q,r}}
\end{align*}
by the previous argument, and therefore
\begin{equation}\label{eq:210930-1}
\sum_{m=-\infty}^0|a_{m,l}|
\lesssim
\|{\mathcal M}f\|_{{\mathcal M}^p_{q,r}}.
\end{equation}

In addition, taking $0<\varepsilon\ll1$ by $K+1>n(1/(q_0-\varepsilon)-1)>0$, namely,
\[
n+K+1>\frac n{q_0-\varepsilon},
\]
by Lemma \ref{lem:210924-1}, we obtain
\begin{equation}\label{eq:210930-2}
\sum_{m=1}^\infty|a_{m,l}|
\lesssim
\left\|\sum_{Q\in{\mathcal D}({\mathbb R}^n)}\lambda^t_Q\chi_{3Q}\right\|_{{\mathcal M}^p_{q,r}} 
\lesssim
\|{\mathcal M}f\|_{{\mathcal M}^p_{q,r}}.
\end{equation}
Thus, by (\ref{eq:210930-1}) and (\ref{eq:210930-2}), we obtain
\[
\sum_{m=-\infty}^\infty|a_{m,l}|
\lesssim
\|{\mathcal M}f\|_{{\mathcal M}^p_{q,r}}.
\]
As a consequence, applying the Lebesgue convergence theorem, we obtain
\[
\lim_{l \to \infty}\sum_{m=-\infty}^\infty a_{m,l}
=
\sum_{m=-\infty}^\infty\lim_{l \to \infty}a_{m,l}.
\]
Hence, using Fubini's theorem again, we have
\begin{align*}
\langle f,\varphi \rangle
&=
\sum_{m=-\infty}^\infty
\left(
\lim_{l\to\infty}\sum_{Q\in{\mathcal D}_m({\mathbb R}^n)}\lambda^{t_l}_Q
\int_{{\mathbb R}^n}a^{t_l}_Q(x)\varphi(x)\,{\rm d}x
\right)\\
&=
\sum_{m=-\infty}^\infty
\left(
\lim_{l\to\infty}
\int_{{\mathbb R}^n}
\left(\sum_{Q\in{\mathcal D}_m({\mathbb R}^n)}\lambda^{t_l}_Q a^{t_l}_Q(x)\right)\varphi(x)
\,{\rm d}x
\right)\\
&=
\sum_{m=-\infty}^\infty\sum_{Q \in {\mathcal D}_m({\mathbb R}^n)}\lim_{l \to \infty}
\left(
\int_{{\mathbb R}^n}\lambda^{t_l}_Q a^{t_l}_Q(x)\varphi(x)\,{\rm d}x
\right)\\
&=
\sum_{m=-\infty}^\infty\sum_{Q \in {\mathcal D}_m({\mathbb R}^n)}
\int_{{\mathbb R}^n}\lambda_Q a_Q(x)\varphi(x)\,{\rm d}x
=
\left<\sum_{Q \in {\mathcal D}({\mathbb R}^n)}\lambda_Q a_Q,\varphi \right>.
\end{align*}
Consequently, we obtain the desired result.
\end{proof}

\section{Application to the Olsen inequality}\label{s6}

As an application of Theorems \ref{thm:200308-1} and \ref{thm:200308-2}, we can reprove the following Olsen inequality about the fractional integral operator $I_\alpha$, $0<\alpha<n$, defined by
\begin{equation*}%\label{eq:Rieasz-potential}
I_\alpha f(x)
\equiv
\int_{{\mathbb R}^n}\frac{f(y)}{|x-y|^{n-\alpha}}\,{\rm d}y
\end{equation*}
for a measurable function $f$ defined on ${\mathbb R}^n$.

\begin{theorem}\label{thm:Olsen Morrey-Lorentz space}

Let $0<\alpha<n$, $1<p_1\le p_0<\infty$, $0<p_2\le\infty$, $1<q_1\le q_0<\infty$, $1<r_1\le r_0<\infty$ and $1<r_2\le\infty$.
Assume that
\begin{equation*}%\label{eq:Olsen-1.1}
r_1<q_1, \quad
\frac1{q_0}\le\frac\alpha n<\frac1{p_0}, \quad
\displaystyle\frac1{r_0}=\frac1{q_0}+\frac1{p_0}-\frac\alpha n.
\end{equation*}
If we suppose either {\rm (1)} or {\rm (2)} as follows{\rm :}
\begin{itemize}
\item[(1)] $0<r_2,p_2<\infty$ and $\dfrac{r_0}{p_0}=\dfrac{r_1}{p_1}=\dfrac{r_2}{p_2}$,
\item[(2)] $r_2=p_2=\infty$ and $\dfrac{r_0}{p_0}=\dfrac{r_1}{p_1}$,
\end{itemize}
then we have
\begin{equation*}%\label{eq:Olsen-1.2}
\|g\cdot I_\alpha f\|_{{\mathcal M}^{r_0}_{r_1,r_2}}
\lesssim
\|g\|_{{\rm W}{\mathcal M}^{q_0}_{q_1}} \|f\|_{{\mathcal M}^{p_0}_{p_1,p_2}},
\end{equation*}
for any measurable functions $f$ and $g$ in ${\mathcal M}^{p_0}_{p_1,p_2}({\mathbb R}^n)$ and ${\rm W}{\mathcal M}^{q_0}_{q_1}({\mathbb R}^n)$, respectively.
\end{theorem}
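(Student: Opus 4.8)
**The plan is to reduce the Olsen inequality to the atomic decomposition (Theorem~\ref{thm:200308-2}) for $f$ together with an atom-wise estimate for $g\cdot I_\alpha a_j$.**
First I would invoke Theorem~\ref{thm:200308-2} applied to $f\in{\mathcal M}^{p_0}_{p_1,p_2}({\mathbb R}^n)$ (using $1<p_1\le p_0<\infty$): writing $f=\sum_{j}\lambda_j a_j$ in ${\mathcal S}'$, with $|a_j|\le\chi_{Q_j}$, each $a_j\in L^\infty\cap{\mathcal P}_0^\perp$, and, for the value of $v$ I select below,
\[
\left\|\left(\sum_j(\lambda_j\chi_{Q_j})^v\right)^{1/v}\right\|_{{\mathcal M}^{p_0}_{p_1,p_2}}
\lesssim_v\|f\|_{{\mathcal M}^{p_0}_{p_1,p_2}}.
\]
Using $I_\alpha f=\sum_j\lambda_j I_\alpha a_j$, the heart of the matter is the pointwise/norm estimate for a single normalized atom. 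Since $I_\alpha$ maps the unit ball of $L^\infty$ supported on $Q_j$ into something controlled by $\ell(Q_j)^\alpha\chi_{3Q_j}$ plus a fast-decaying tail $\lesssim\ell(Q_j)^\alpha\,(\ell(Q_j)/|x-c(Q_j)|)^{n}\chi_{(3Q_j)^c}$ (here the moment condition $a_j\in{\mathcal P}_0^\perp$ gives one extra order of decay), one expects $|I_\alpha a_j|\lesssim\ell(Q_j)^{\alpha}\,(M\chi_{Q_j})^{\gamma}$ for a suitable exponent $\gamma>1$ depending only on $n,\alpha$. Multiplying by $g$ and summing, one obtains
\[
|g\cdot I_\alpha f|
\lesssim
|g|\sum_j\lambda_j\,\ell(Q_j)^{\alpha}(M\chi_{Q_j})^{\gamma}.
\]

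**Next I would estimate the ${\mathcal M}^{r_0}_{r_1,r_2}$-norm of the right-hand side by a Hölder inequality for Morrey--Lorentz spaces combined with the Fefferman--Stein vector-valued inequality (Proposition~\ref{prop:200324-1}).**
The factor $|g|$ is handled in ${\rm W}{\mathcal M}^{q_0}_{q_1}$; the remaining sum should be placed in ${\mathcal M}^{p_0}_{p_1,p_2}$, using the Hölder relation $\frac1{r_0}=\frac1{q_0}+\frac1{p_0}-\frac\alpha n$ together with $\frac1{r_1}=\frac1{q_1}+\frac1{p_1}$ (which follows from $r_1<q_1$ and the compatibility conditions) and $\frac1{r_2}=\frac1{q_2}+\frac1{p_2}$ in case~(1), respectively $r_2=q_2=p_2=\infty$ in case~(2). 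Here one must be slightly careful: the natural Hölder partner of a weak Morrey space is a predual (block) space, so I would instead argue via the ${\mathcal H}^{p'}_{q',r'}$-duality recalled in Section~2, exactly as in the proof of Theorem~\ref{thm:200308-1}, testing against a block $h$. After pairing, one is left with $\sum_j\lambda_j\ell(Q_j)^\alpha\int(M\chi_{Q_j})^\gamma |g||h|$, and the $\ell(Q_j)^\alpha$ is absorbed because $\alpha$ shifts the Morrey exponent $p_0\rightsquigarrow r_0$ precisely by the scaling relation; this is the analogue of the "$I_\alpha$ raises $|Q|^{1/p_0-1/r_0}=|Q|^{\alpha/n}$" bookkeeping in the classical Adams/Olsen argument. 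Then the vector-valued maximal estimate converts $\sum_j(\lambda_j\chi_{Q_j})^v$ at the cost of passing to rescaled exponents $(\theta p_0,\theta q_0,\theta r_0)$, which is why I pick $v$ small, e.g. $v<\min(1,p_1,r_1)$, so that $\theta=1/v$ makes all the "second indices" larger than $1$ and Proposition~\ref{prop:200324-1} applies.

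**The main obstacle I expect is the atom-wise estimate $|g\cdot I_\alpha a_j|\lesssim\|g\|_{{\rm W}{\mathcal M}^{q_0}_{q_1}}\,\ell(Q_j)^\alpha(M\chi_{Q_j})^\gamma\cdot(\text{block factor})$ with the right exponent $\gamma$, and the accompanying convergence justification.**
Concretely, the delicate point is that $I_\alpha a_j$ does \emph{not} decay fast enough on its own if one uses only $|a_j|\le\chi_{Q_j}$ with no moment cancellation — one genuinely needs $a_j\in{\mathcal P}_0^\perp$ to gain the extra decay that makes $(M\chi_{Q_j})^\gamma$ summable against a block in ${\mathcal H}^{r_0'}_{r_1',r_2'}$ after the Fefferman--Stein step; getting $\gamma$ large enough (namely $\gamma>\max(1,r_0'/\text{something})$) while keeping it achievable from a single vanishing moment is where the constraint $\frac\alpha n<\frac1{p_0}$ and the hypotheses on the second indices are actually used. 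The remaining parts — a.e./${\mathcal S}'$ convergence of $\sum_j\lambda_j I_\alpha a_j$, and verifying that the resulting distribution coincides with $I_\alpha f$ — are routine given the absolute summability obtained above and the local integrability furnished by Lemma~\ref{lem:211213-1}, so I would dispatch them briefly. Finally, cases~(1) and~(2) are handled uniformly: (2) is the endpoint where the "third slot" of every space is $\infty$, and the same chain of inequalities goes through with the $\sup$-form of the Lorentz quasi-norm, using the $r=\infty$ statements of Propositions~\ref{prop:200324-1} and \ref{prop:190622-2}.
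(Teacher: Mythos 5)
Your skeleton (decompose $f$ by Theorem \ref{thm:200308-2} with vanishing moments, use the decay of $I_\alpha a_j$ as in Lemma \ref{lem:200507-1}, then invoke Fefferman--Stein) matches the paper's, but the two steps you flag as "the heart of the matter" are exactly where your argument has genuine gaps. First, the way you propose to extract $\|g\|_{{\rm W}{\mathcal M}^{q_0}_{q_1}}$ does not work. The H\"older route fails on exponents: the relation $\tfrac1{r_1}=\tfrac1{q_1}+\tfrac1{p_1}$ you assert does \emph{not} follow from $r_1<q_1$ and $r_0/p_0=r_1/p_1$ (compute: $\tfrac1{r_1}=\tfrac{p_0}{p_1 q_0}+\tfrac1{p_1}-\tfrac{p_0\alpha}{p_1 n}$, which equals $\tfrac1{q_1}+\tfrac1{s_1}$ with the Adams-shifted $s_1$ only under the extra hypothesis $q_0/q_1=p_0/p_1$, which the theorem does not assume). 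Your fallback -- "test against a block exactly as in the proof of Theorem \ref{thm:200308-1}" -- is not a proof either, because the objects $(M\chi_{Q_j})^\gamma|g|$ are not compactly supported atoms and Theorem \ref{thm:200308-1} does not apply to them; you would have to redo that entire duality argument from scratch. The paper's device, which you never identify, is to expand $(M\chi_{Q_j})^\gamma\sim\sum_k 2^{-k\gamma n}\chi_{2^kQ_j}$ and recognize $b_{jk}:=\ell(2^kQ_j)^{n/q_0}\|g\|_{{\rm W}{\mathcal M}^{q_0}_{q_1}}^{-1}|g|\chi_{2^kQ_j}$ as a normalized ${\rm W}{\mathcal M}^{q_0}_{q_1}$-atom, so that Theorem \ref{thm:200308-1} (with $s=q_0$, $t=q_1$, target ${\mathcal M}^{r_0}_{r_1,r_2}$) applies as a black box; this is precisely where $r_1<q_1$ and $r_0<q_0$ (i.e.\ $q<t$, $p<s$) are consumed.

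Second, your claim that "$\ell(Q_j)^\alpha$ is absorbed because $\alpha$ shifts the Morrey exponent" hides the other nontrivial step: after Fefferman--Stein one must bound $\bigl\|(\sum_j(\lambda_j\ell(Q_j)^{\alpha-n/q_0}\chi_{Q_j})^v)^{1/v}\bigr\|_{{\mathcal M}^{r_0}_{r_1,r_2}}$ by $\|f\|_{{\mathcal M}^{p_0}_{p_1,p_2}}$, and since the cubes $Q_j$ overlap this does not follow from scaling alone. The paper does it via the reverse pointwise bound $I_\beta\chi_Q\gtrsim\ell(Q)^\beta\chi_Q$ (Lemma \ref{lem:120922-1}) with $\beta=(\alpha-n/q_0)v$, which converts the weighted sum into $I_\beta[\sum_j(\lambda_j\chi_{Q_j})^v]$, followed by the Adams theorem on Morrey--Lorentz spaces (Proposition \ref{prop:Adams Morrey-Lorentz space}); this is also where the condition $\tfrac1{q_0}\le\tfrac\alpha n$ and the proportionality of all three index pairs enter. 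Without these two devices your outline does not close; with them, the remaining points you raise (a.e.\ identification of $I_\alpha f$ with $\sum_j\lambda_jI_\alpha a_j$ via Hardy--Littlewood--Sobolev after truncating $f$, and taking $K\gg1$ rather than $K=0$ so that the decay exponent beats the Fefferman--Stein parameter $\theta>1/v$) are indeed routine.
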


If we rewrite by $f\mapsto|\nabla u|$ in this theorem, the following norm estimate with a weight, which is extension of the Fefferman-Phong inequality \cite[p. 143]{Fefferman83}, can be obtained.

\begin{theorem}%\label{thm:Fefferman-Phong type inequality}
Let $n\ge3$ and $1<q\le\dfrac n2<\infty$.
Then there exists a constant $K>0$ such that
\begin{equation*}%\label{eq:Fefferman-Phong type inequality}
\int_{{\mathbb R}^n}|u(x)|^2V(x)\,{\rm d}x
\le
K\|V\|_{{\rm W}{\mathcal M}^{\frac n2}_q}
\int_{{\mathbb R}^n}|\nabla u(x)|^2\,{\rm d}x
\end{equation*}
for all measurable functions $u$ with $\nabla u\in(L^2({\mathbb R}^n))^n$ and non-negative measurable functions $V\in {\rm W}{\mathcal M}^{\frac n2}_q({\mathbb R}^n)$.
\end{theorem}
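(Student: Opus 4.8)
The plan is to reduce the weighted estimate to the Olsen inequality of Theorem~\ref{thm:Olsen Morrey-Lorentz space}, applied to $f=|\nabla u|$ and $g=V^{1/2}$, exactly as the remark preceding the statement indicates. First I would invoke the classical pointwise Sobolev estimate
\[
|u(x)|\le c_n\,I_1(|\nabla u|)(x)\qquad\text{for a.e.\ }x\in{\mathbb R}^n,
\]
which for $u\in C_c^\infty({\mathbb R}^n)$ follows from the representation $u(x)=c_n\int_{{\mathbb R}^n}\frac{(x-y)\cdot\nabla u(y)}{|x-y|^n}\,{\rm d}y$ and extends, for $n\ge3$, to every $u$ with $\nabla u\in(L^2({\mathbb R}^n))^n$ (interpreted as an element of the homogeneous Sobolev space $\dot W^{1,2}$, which embeds continuously into $L^{2n/(n-2)}({\mathbb R}^n)$) by density together with the $L^2\to L^{2n/(n-2)}$ boundedness of $I_1$. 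Since $V\ge0$, this yields
\[
\int_{{\mathbb R}^n}|u(x)|^2V(x)\,{\rm d}x
\le c_n^2\int_{{\mathbb R}^n}\bigl(I_1(|\nabla u|)(x)\bigr)^2V(x)\,{\rm d}x
=c_n^2\bigl\|V^{1/2}\,I_1(|\nabla u|)\bigr\|_{L^2}^2,
\]
so it suffices to bound the last quantity.

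Next I would apply Theorem~\ref{thm:Olsen Morrey-Lorentz space} with the parameters $\alpha=1$, $p_0=p_1=p_2=2$, $q_0=n$, $q_1=2q$, $r_0=r_1=r_2=2$, and with $g=V^{1/2}$, $f=|\nabla u|$. All hypotheses are then checked routinely: $1<2q\le n$ because $1<q\le n/2$; the requirement $r_1<q_1$ is $2<2q$, i.e.\ $q>1$; the chain $\tfrac1{q_0}\le\tfrac{\alpha}n<\tfrac1{p_0}$ reads $\tfrac1n\le\tfrac1n<\tfrac12$, using $n\ge3$; the relation $\tfrac1{r_0}=\tfrac1{q_0}+\tfrac1{p_0}-\tfrac{\alpha}n$ reads $\tfrac12=\tfrac1n+\tfrac12-\tfrac1n$; and alternative (1) holds since $\tfrac{r_0}{p_0}=\tfrac{r_1}{p_1}=\tfrac{r_2}{p_2}=1$. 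Recalling ${\mathcal M}^2_{2,2}=L^{2,2}=L^2$, Theorem~\ref{thm:Olsen Morrey-Lorentz space} gives
\[
\bigl\|V^{1/2}\,I_1(|\nabla u|)\bigr\|_{L^2}
=\bigl\|V^{1/2}\,I_1(|\nabla u|)\bigr\|_{{\mathcal M}^2_{2,2}}
\lesssim\|V^{1/2}\|_{{\rm W}{\mathcal M}^n_{2q}}\,\|\nabla u\|_{L^2}.
\]

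To finish I would record the elementary homogeneity identity $\|V^{1/2}\|_{{\rm W}{\mathcal M}^n_{2q}}=\|V\|_{{\rm W}{\mathcal M}^{n/2}_q}^{1/2}$. It follows from $\lambda_{V^{1/2}}(\alpha)=\lambda_V(\alpha^2)$, which gives $\|V^{1/2}\chi_Q\|_{L^{2q,\infty}}=\|V\chi_Q\|_{L^{q,\infty}}^{1/2}$ for every cube $Q$, combined with the identity of Morrey exponents $\tfrac1n-\tfrac1{2q}=\tfrac12\bigl(\tfrac2n-\tfrac1q\bigr)$; in particular $V\in{\rm W}{\mathcal M}^{n/2}_q({\mathbb R}^n)$ forces $V^{1/2}\in{\rm W}{\mathcal M}^n_{2q}({\mathbb R}^n)$, so the right-hand side above is finite. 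Combining the three displays and squaring produces
\[
\int_{{\mathbb R}^n}|u(x)|^2V(x)\,{\rm d}x
\lesssim\|V\|_{{\rm W}{\mathcal M}^{n/2}_q}\int_{{\mathbb R}^n}|\nabla u(x)|^2\,{\rm d}x,
\]
with $K$ the resulting implicit constant, which depends only on $n$ and $q$. I expect the only genuinely delicate point to be the justification of the pointwise bound $|u|\lesssim I_1(|\nabla u|)$ for a function merely assumed to satisfy $\nabla u\in(L^2({\mathbb R}^n))^n$: this requires pinning down the normalization of $u$ (working in $\dot W^{1,2}$, equivalently modulo additive constants) and an approximation argument, after which everything else is an immediate consequence of Theorem~\ref{thm:Olsen Morrey-Lorentz space} and the scaling of weak Morrey quasi-norms under powers. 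Checking the parameter constraints and the identification ${\mathcal M}^2_{2,2}=L^2$ is entirely routine.
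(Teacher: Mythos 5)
Your proposal is correct and follows exactly the route the paper intends: the paper offers no details beyond the remark that the theorem follows from Theorem \ref{thm:Olsen Morrey-Lorentz space} via $f\mapsto|\nabla u|$, and your write-up supplies precisely that reduction (the pointwise bound $|u|\lesssim I_1(|\nabla u|)$, the parameter choice $\alpha=1$, $p_i=r_i=2$, $q_0=n$, $q_1=2q$, the identification ${\mathcal M}^2_{2,2}=L^2$, and the scaling $\|V^{1/2}\|_{{\rm W}{\mathcal M}^n_{2q}}=\|V\|_{{\rm W}{\mathcal M}^{n/2}_q}^{1/2}$), with all hypotheses checked correctly. The one point you rightly flag as delicate — justifying $|u|\lesssim I_1(|\nabla u|)$ for $u$ merely satisfying $\nabla u\in(L^2)^n$, via the decaying representative in $\dot W^{1,2}$ — is not addressed in the paper either, so your treatment is if anything more complete.
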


This theorem follows the necessary condition of the positivity of the Schr\"odinger operator $L\equiv-\Delta-V$ for the potential $V\ge0$; when $0<K\|V\|_{{\rm W}{\mathcal M}^{\frac n2}_q}\le1$, using the integration by parts, we see that
\begin{align*}
(Lu,u)_{L^2}
=
\int_{{\mathbb R}^n}|\nabla u(x)|^2\,{\rm d}x-\int_{{\mathbb R}^n}|u(x)|^2V(x)\,{\rm d}x
\ge
\left(1-K\|V\|_{{\rm W}{\mathcal M}^{\frac n2}_q}\right)
\int_{{\mathbb R}^n}|\nabla u(x)|^2\,{\rm d}x
\ge
0.
\end{align*}

On the other hand, considering the embedding ${\mathcal M}^{q_0}_{q_1}({\mathbb R}^n)\hookrightarrow{\rm W}{\mathcal M}^{q_0}_{q_1}({\mathbb R}^n)$ and the case $p_2=p_1$ and $q_2=q_1$ in Theorem \ref{thm:Olsen Morrey-Lorentz space}, we can obtain the original Olsen inequality given in \cite{Olsen95}.
In particular, in the case (2) in Theorem \ref{thm:Olsen Morrey-Lorentz space}, we can rewrite as follows:

\begin{theorem}%\label{thm:Olsen weak Morrey space}

Let $0<\alpha<n$, $1<p_1\le p_0<\infty$, $1<q_1\le q_0<\infty$ and $1<r_1\le r_0<\infty$.
Assume that
\begin{equation*}%\label{Olsen-2.1}
r_1<q_1, \quad
\frac1{q_0}\le\frac\alpha n<\frac1{p_0}, \quad
\displaystyle\frac1{r_0}=\frac1{q_0}+\frac1{p_0}-\frac\alpha n, \quad
\frac{r_0}{p_0}=\frac{r_1}{p_1}.
\end{equation*}
Then we have
\begin{equation*}%\label{eq:Olsen-2.2}
\|g\cdot I_\alpha f\|_{{\rm W}{\mathcal M}^{r_0}_{r_1}}
\lesssim
\|g\|_{{\rm W}{\mathcal M}^{q_0}_{q_1}} \|f\|_{{\rm W}{\mathcal M}^{p_0}_{p_1}},
\end{equation*}
for any measurable functions $f$ and $g$ in ${\rm W}{\mathcal M}^{p_0}_{p_1}({\mathbb R}^n)$ and ${\rm W}{\mathcal M}^{q_0}_{q_1}({\mathbb R}^n)$, respectively.
\end{theorem}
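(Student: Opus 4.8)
The plan is to read this statement off directly from case (2) of Theorem \ref{thm:Olsen Morrey-Lorentz space}, using the identification ${\mathcal M}^p_{q,\infty}({\mathbb R}^n)={\rm W}{\mathcal M}^p_q({\mathbb R}^n)$ (with equivalent quasi-norms) valid for every $0<q\le p<\infty$. Specializing Theorem \ref{thm:Olsen Morrey-Lorentz space} to $p_2=r_2=\infty$, the standing hypotheses $1<p_1\le p_0<\infty$, $1<q_1\le q_0<\infty$, $1<r_1\le r_0<\infty$, $r_1<q_1$, $\frac1{q_0}\le\frac\alpha n<\frac1{p_0}$ and $\frac1{r_0}=\frac1{q_0}+\frac1{p_0}-\frac\alpha n$ remain unchanged, the case-(2) requirement collapses to the single scaling identity $\frac{r_0}{p_0}=\frac{r_1}{p_1}$, and the conclusion $\|g\cdot I_\alpha f\|_{{\mathcal M}^{r_0}_{r_1,\infty}}\lesssim\|g\|_{{\rm W}{\mathcal M}^{q_0}_{q_1}}\|f\|_{{\mathcal M}^{p_0}_{p_1,\infty}}$ becomes, after rewriting ${\mathcal M}^{r_0}_{r_1,\infty}={\rm W}{\mathcal M}^{r_0}_{r_1}$ and ${\mathcal M}^{p_0}_{p_1,\infty}={\rm W}{\mathcal M}^{p_0}_{p_1}$, precisely the asserted estimate. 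So, granted Theorem \ref{thm:Olsen Morrey-Lorentz space}, there is nothing further to prove.

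For completeness let me indicate the mechanism behind Theorem \ref{thm:Olsen Morrey-Lorentz space} itself, since that is where the genuine work lies. First I would decompose $f\in{\mathcal M}^{p_0}_{p_1,p_2}({\mathbb R}^n)$ by Theorem \ref{thm:200308-2} (in the present weak-Morrey situation, by its corollary), writing $f=\sum_j\lambda_ja_j$ with $|a_j|\le\chi_{Q_j}$ and $\bigl\|(\sum_j(\lambda_j\chi_{Q_j})^v)^{1/v}\bigr\|_{{\mathcal M}^{p_0}_{p_1,p_2}}\lesssim\|f\|_{{\mathcal M}^{p_0}_{p_1,p_2}}$ for a small $v\in(0,1)$. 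Then $I_\alpha f=\sum_j\lambda_jI_\alpha a_j$, and from $|a_j|\le\chi_{Q_j}$ one has $|I_\alpha a_j|\lesssim I_\alpha\chi_{Q_j}$, with $I_\alpha\chi_{Q_j}\sim\ell(Q_j)^\alpha$ on $Q_j$ and $\sim\ell(Q_j)^n|x-c(Q_j)|^{\alpha-n}$ away from $Q_j$. The next step is to split ${\mathbb R}^n$ into the dyadic annuli $3^{k+1}Q_j\setminus3^kQ_j$, $k\ge0$, and on each of them to control $g\cdot I_\alpha a_j$ by the Hölder inequality for Lorentz quasi-norms (Lemma \ref{lem:Lpq-Holder}) — here $r_1<q_1$ is exactly what makes the required Hölder exponents admissible, and $\|g\chi_{3^{k+1}Q_j}\|_{L^{q_1,\infty}}\le\|g\|_{{\rm W}{\mathcal M}^{q_0}_{q_1}}|3^{k+1}Q_j|^{1/q_1-1/q_0}$ — so that, after renormalization, the $k$-th annular piece of $g\cdot I_\alpha a_j$ is an ${\mathcal M}^{r_0}_{r_1,r_2}$-atom supported in $3^{k+1}Q_j$ with coefficient $\lesssim 3^{-k(n-\alpha)}\,\|g\|_{{\rm W}{\mathcal M}^{q_0}_{q_1}}\,|Q_j|^{\alpha/n}\,\lambda_j$.

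Summing the geometric series in $k$ — which converges by the scaling constraints (in particular $\alpha<n$) — collapses the double family $\{(j,k)\}$ to a single atomic family, and feeding it into the atomic decomposition estimate Theorem \ref{thm:200308-1} for the target space yields $\|g\cdot I_\alpha f\|_{{\mathcal M}^{r_0}_{r_1,r_2}}\lesssim\|g\|_{{\rm W}{\mathcal M}^{q_0}_{q_1}}\,\bigl\|(\sum_j(|Q_j|^{\alpha/n}\lambda_j\chi_{Q_j})^v)^{1/v}\bigr\|_{{\mathcal M}^{r_0}_{r_1,r_2}}$. The final step is to bound this last quantity by $\bigl\|(\sum_j(\lambda_j\chi_{Q_j})^v)^{1/v}\bigr\|_{{\mathcal M}^{p_0}_{p_1,p_2}}\lesssim\|f\|_{{\mathcal M}^{p_0}_{p_1,p_2}}$; this is a Morrey-Lorentz analogue of the Adams inequality for the sequence $\{|Q_j|^{\alpha/n}\lambda_j\}$, established by a duality argument parallel to the one used in the proof of Theorem \ref{thm:200308-1}, and it is exactly here that the two constraints $\frac1{r_0}=\frac1{q_0}+\frac1{p_0}-\frac\alpha n$ and $\frac{r_0}{p_0}=\frac{r_1}{p_1}$ must match up. Taking $p_2=r_2=\infty$ then recovers the present statement.

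The hard part will be this bookkeeping in the middle and final steps: one must (i) handle the slowly decaying tail of $I_\alpha\chi_{Q_j}$ by the annular (molecular) splitting above rather than by a single atom, and (ii) track precisely which power of $|Q_j|$ is produced by the Hölder step (the factor $|Q_j|^{1/q_1-1/q_0}$ from $\|g\chi_{Q_j}\|_{L^{q_1,\infty}}$) and which by $I_\alpha\chi_{Q_j}$ (the factor $|Q_j|^{\alpha/n}$), since it is the cancellation of these powers under the two scaling relations that closes the final norm identity. Everything else — the convergence of $\sum_j\lambda_ja_j$, the passage through the heat-maximal/grand-maximal characterization of $H{\mathcal M}^p_{q,r}$, and the reassembly — is routine given the atomic decomposition Theorems \ref{thm:200308-1} and \ref{thm:200308-2}, their weak-Morrey corollaries, and the maximal-function bounds collected in Section \ref{s2}.
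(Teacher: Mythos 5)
Your reduction is exactly the paper's: the paper states this theorem as an immediate rewriting of case (2) of Theorem \ref{thm:Olsen Morrey-Lorentz space} with $p_2=r_2=\infty$, using the identification ${\mathcal M}^{p}_{q,\infty}({\mathbb R}^n)={\rm W}{\mathcal M}^{p}_{q}({\mathbb R}^n)$, so your first paragraph is already a complete proof of the statement. One caveat on your supplementary sketch of Theorem \ref{thm:Olsen Morrey-Lorentz space} itself: the size bound $|I_\alpha a_j|\lesssim I_\alpha\chi_{Q_j}$ alone yields only the tail decay $2^{-k(n-\alpha)}$, which is not enough to absorb the growth of the renormalized atoms on $2^kQ_j$ in the target quasi-norm; the paper instead exploits the moment conditions $a_j\in{\mathcal P}_K({\mathbb R}^n)^\perp$ through Lemma \ref{lem:200507-1} to obtain the decay $2^{-k(n+K+1-\alpha)}$ with $K\gg1$, which is what makes the sum over $k$ converge after the Fefferman--Stein step.
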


To prove Theorem \ref{thm:Olsen Morrey-Lorentz space}, the author employed the generalization for Adams theorem \cite{Adams75} and two lemmas mentioned in \cite{IST14} as follows.

\begin{proposition}{\cite[Proposition 3]{Hatano20-2}}\label{prop:Adams Morrey-Lorentz space}
Let $0<\alpha<n$, $1<q\le p<\infty$, $1<t\le s<\infty$ and $0<r,u\le\infty$.
Assume that $\dfrac1s=\dfrac1p-\dfrac\alpha n$.
If we suppose either {\rm (1)} or {\rm (2)} as follows:
\begin{itemize}
\item[(1)] $0<r,u<\infty$ and $\dfrac sp=\dfrac tq=\dfrac ur$,
\item[(2)] $r=u=\infty$ and $\dfrac sp=\dfrac tq$,
\end{itemize}
then we have
\begin{equation*}%\label{eq:Adams Morrey-Lorentz space-2}
\|I_\alpha f\|_{{\mathcal M}^s_{t,u}}
\lesssim
\|f\|_{{\mathcal M}^p_{q,r}},
\end{equation*}
for any measurable function $f$ in ${\mathcal M}^p_{q,r}({\mathbb R}^n)$.
\end{proposition}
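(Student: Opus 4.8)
The plan is to run the classical Adams--Hedberg argument and then exploit the homogeneity of the Morrey--Lorentz quasi-norm under nonnegative powers, which causes the fine indices $r$ and $u$ to disappear at the end. Splitting $f=f^+-f^-$, we may assume $f\ge0$, and we may also assume $0<\|f\|_{{\mathcal M}^p_{q,r}}<\infty$, since the remaining cases are trivial.

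First I would prove a pointwise Hedberg estimate. For $x\in{\mathbb R}^n$ and a free parameter $R>0$, write
\[
I_\alpha f(x)
=\int_{|x-y|<R}\frac{f(y)}{|x-y|^{n-\alpha}}\,{\rm d}y
+\int_{|x-y|\ge R}\frac{f(y)}{|x-y|^{n-\alpha}}\,{\rm d}y .
\]
Decomposing $B(x,R)$ into the annuli $\{2^{-j-1}R\le|x-y|<2^{-j}R\}$ ($j\ge0$) and summing the convergent series $\sum_j2^{-j\alpha}$, the first integral is $\lesssim R^\alpha Mf(x)$, where $M$ is the Hardy--Littlewood maximal operator. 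For the second integral, decompose into the annuli $\{2^jR\le|x-y|<2^{j+1}R\}$ ($j\ge0$) and use the elementary bound $\int_{B(x,\rho)}f\,{\rm d}y\lesssim\rho^{n-n/p}\|f\|_{{\mathcal M}^p_{q,r}}$, which follows from H\"older's inequality for Lorentz quasi-norms (Lemma \ref{lem:Lpq-Holder}), the defining property of the Morrey--Lorentz quasi-norm, and the comparability of that quasi-norm taken over balls and over cubes; since $\alpha<n/p$ (equivalently $1/s>0$), the resulting geometric series converges and the second integral is $\lesssim R^{\alpha-n/p}\|f\|_{{\mathcal M}^p_{q,r}}$. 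Here the hypothesis $q>1$ is what makes $q'<\infty$, so that Lemma \ref{lem:Lpq-Holder} applies (when $r<1$ one first passes through the embedding $L^{q,r}\hookrightarrow L^{q,1}$). Optimizing in $R$, i.e. taking $R=\bigl(\|f\|_{{\mathcal M}^p_{q,r}}/Mf(x)\bigr)^{p/n}$ at points where $0<Mf(x)<\infty$ (the other points are trivial, and $Mf$ is finite a.e.\ because $\|f\|_{{\mathcal M}^p_{q,r}}<\infty$ and $q>1$), and using $1-\alpha p/n=p/s$, one obtains
\[
I_\alpha f(x)\lesssim\|f\|_{{\mathcal M}^p_{q,r}}^{1-\frac ps}\bigl(Mf(x)\bigr)^{\frac ps}
\qquad\text{for a.e. }x\in{\mathbb R}^n .
\]
In particular $I_\alpha f(x)$ is finite for a.e.\ $x$, so there is no separate question of well-definedness.

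Next I would take the $\|\cdot\|_{{\mathcal M}^s_{t,u}}$ quasi-norm of both sides. The identity $\|g^\theta\|_{{\mathcal M}^a_{b,c}}=\|g\|_{{\mathcal M}^{a\theta}_{b\theta,c\theta}}^\theta$, valid for $\theta>0$ and immediate from $(g^\theta)^\ast=(g^\ast)^\theta$ and the definitions, applied with $\theta=p/s$ and $g=Mf$ gives
\[
\bigl\|(Mf)^{p/s}\bigr\|_{{\mathcal M}^s_{t,u}}
=\|Mf\|_{{\mathcal M}^p_{\frac{tp}{s},\frac{up}{s}}}^{p/s} .
\]
By the scaling hypotheses, $tp/s=q$; in case (1), $up/s=r$, so the right-hand side equals $\|Mf\|_{{\mathcal M}^p_{q,r}}^{p/s}$, while in case (2), $u=\infty$ and the right-hand side equals $\|Mf\|_{{\mathcal M}^p_{q,\infty}}^{p/s}$, which by the embedding ${\mathcal M}^p_{q,r}\hookrightarrow{\mathcal M}^p_{q,\infty}$ (Proposition \ref{prop:Mpqr-embedding}) is again controlled by $\|Mf\|_{{\mathcal M}^p_{q,r}}^{p/s}$. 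Finally, the boundedness of $M$ on ${\mathcal M}^p_{q,r}({\mathbb R}^n)$ (Proposition \ref{prop:190622-2}, applicable precisely because $q>1$) gives $\|Mf\|_{{\mathcal M}^p_{q,r}}\lesssim\|f\|_{{\mathcal M}^p_{q,r}}$, and assembling the pieces,
\[
\|I_\alpha f\|_{{\mathcal M}^s_{t,u}}
\lesssim\|f\|_{{\mathcal M}^p_{q,r}}^{1-\frac ps}\|f\|_{{\mathcal M}^p_{q,r}}^{\frac ps}
=\|f\|_{{\mathcal M}^p_{q,r}} .
\]

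I do not expect any genuinely hard step: the scheme is the standard one, so the main obstacle is bookkeeping. Concretely, one must make the far-part H\"older estimate uniform over all admissible values of the fine index $r\in(0,\infty]$ (in particular $r<1$), and one must check carefully that the power-homogeneity identity together with the scaling relations $s/p=t/q\ (=u/r)$ converts the Lorentz indices in the right way, so that the estimate collapses to the already-established boundedness of $M$ on ${\mathcal M}^p_{q,r}({\mathbb R}^n)$.
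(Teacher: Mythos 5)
Your argument is correct: the Hedberg pointwise estimate $I_\alpha f\lesssim\|f\|_{{\mathcal M}^p_{q,r}}^{1-p/s}(Mf)^{p/s}$ (with the far part controlled via H\"older in Lorentz spaces and the Morrey--Lorentz bound on $\int_{B(x,\rho)}f$), followed by the power-homogeneity identity for the quasi-norm and the boundedness of $M$ from Proposition \ref{prop:190622-2}, assembles exactly as you claim, and the scaling relations $s/p=t/q=u/r$ convert the indices correctly in both cases. The paper itself only cites this result from \cite{Hatano20-2} without proof, and your Adams--Hedberg scheme is the standard route taken there, so there is nothing further to reconcile.
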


It is well known that Hardy and Littlewood \cite{HaLi28} and Sobolev \cite{Sobolev38} proved the boundedness of a fractional integral operator on Lebesgue spaces, which is called the Hardy-Littlewood-Sobolev inequality: if $0<\alpha<n$ and $1<p<s<\infty$ satisfies $1/s=1/p-\alpha/n$, then
\begin{equation}\label{eq:HLS-ineq}
\|I_\alpha f\|_{L^s}
\lesssim
\|f\|_{L^p}
\end{equation}
for all $f\in L^p({\mathbb R}^n)$ (see, e.g., \cite[Theorem 1.2.3]{Grafakos14-2}).

\begin{lemma}[{\cite[Lemma 4.1]{IST14}}]\label{lem:120922-1}
There exists a constant depending only on $n$ and $\alpha$ such that, for every cube $Q$, we have $I_\alpha \chi_Q(x) \ge C\ell(Q)^\alpha\chi_Q(x)$ for all $x \in Q$.
\end{lemma}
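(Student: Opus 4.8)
The plan is to bound the integrand of $I_\alpha\chi_Q$ from below on $Q$ by a quantity depending only on the diameter of $Q$, and then integrate. Fix a cube $Q\in{\mathcal Q}({\mathbb R}^n)$ of side length $\ell(Q)$ and take $x\in Q$. Since the diameter of $Q$ is $\sqrt n\,\ell(Q)$, every $y\in Q$ satisfies $|x-y|\le\sqrt n\,\ell(Q)$; as $0<\alpha<n$ the exponent $n-\alpha$ is positive, so $|x-y|^{-(n-\alpha)}\ge(\sqrt n\,\ell(Q))^{-(n-\alpha)}$ for all $y\in Q$.

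Discarding the (nonnegative) contribution of ${\mathbb R}^n\setminus Q$, I would then estimate
\[
I_\alpha\chi_Q(x)
=\int_{{\mathbb R}^n}\frac{\chi_Q(y)}{|x-y|^{n-\alpha}}\,{\rm d}y
\ge\int_Q\frac{{\rm d}y}{|x-y|^{n-\alpha}}
\ge\frac{|Q|}{(\sqrt n\,\ell(Q))^{n-\alpha}}
=n^{-\frac{n-\alpha}2}\,\ell(Q)^\alpha,
\]
using $|Q|=\ell(Q)^n$. Setting $C:=n^{-(n-\alpha)/2}$, which depends only on $n$ and $\alpha$, yields $I_\alpha\chi_Q(x)\ge C\ell(Q)^\alpha$ for every $x\in Q$; for $x\notin Q$ the asserted inequality $I_\alpha\chi_Q(x)\ge C\ell(Q)^\alpha\chi_Q(x)$ holds trivially because its right-hand side vanishes. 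Combining the two cases gives the lemma.

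There is essentially no obstacle here. The only points requiring a moment's attention are that $n-\alpha>0$—so that restricting the domain of a nonnegative integrand lowers the value and the pointwise bound on $|x-y|^{-(n-\alpha)}$ points in the right direction—and the elementary fact that a cube of side $\ell$ has diameter $\sqrt n\,\ell$. One could instead integrate over $B(x,\ell(Q))\cap Q$, whose Lebesgue measure is $\gtrsim_n\ell(Q)^n$, and reach the same conclusion, but restricting to $Q$ itself is the shortest route.
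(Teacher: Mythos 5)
Your proof is correct, and it is the standard argument for this lemma (which the paper simply cites from \cite[Lemma 4.1]{IST14} without proof): bound $|x-y|^{-(n-\alpha)}$ from below on $Q$ using the diameter $\sqrt n\,\ell(Q)$ and integrate over $Q$. The constant $C=n^{-(n-\alpha)/2}$ and the sign considerations ($n-\alpha>0$) are all handled properly, so nothing is missing.
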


\begin{lemma}[{\cite[Lemma 4.2]{IST14}}]\label{lem:200507-1}
Let $K=0,1,2,\ldots$.
Suppose that $A$ is an $L^\infty({\mathbb R}^n) \cap {\mathcal P}_K({\mathbb R}^n)^\perp$-function supported on a cube $Q$.
Then,
\begin{equation*}%\label{eq:120918-2}
|I_\alpha A(x)|
\le C_{\alpha,K}
\|A\|_{L^\infty}\ell(Q)^\alpha
\sum_{k=1}^\infty \frac{1}{2^{k(n+K+1-\alpha)}}\chi_{2^kQ}(x),
\quad x \in {\mathbb R}^n.
\end{equation*}
\end{lemma}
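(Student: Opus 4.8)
The plan is to estimate $|I_\alpha A(x)|$ by splitting ${\mathbb R}^n$ into the bounded region near $Q$ and the dyadic annuli $2^{k+1}Q\setminus 2^kQ$, $k\ge1$, using only the size and support of $A$ close to $Q$ while invoking the moment condition $A\in{\mathcal P}_K({\mathbb R}^n)^\perp$ far from $Q$. Since $I_\alpha$ is linear we may assume $\|A\|_{L^\infty}=1$; write $c(Q)$ for the center of $Q$ and $K_x(y)\equiv|x-y|^{\alpha-n}$ for the Riesz kernel.

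For $x\in 2Q$ one discards the cancellation and uses only local integrability of the Riesz kernel together with ${\rm supp}(A)\subset Q$:
\[
|I_\alpha A(x)|\le\int_Q\frac{{\rm d}y}{|x-y|^{n-\alpha}}\le\int_{|z|\le c_n\ell(Q)}\frac{{\rm d}z}{|z|^{n-\alpha}}\lesssim_{n,\alpha}\ell(Q)^\alpha,
\]
which is already of the asserted form, the $k=1$ summand being a fixed multiple of $\ell(Q)^\alpha\chi_{2Q}(x)$.

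For $x\notin 2Q$, choose $k\ge1$ with $x\in2^{k+1}Q\setminus2^kQ$. For $y\in Q$ we have $|y-c(Q)|\le\frac{\sqrt n}{2}\ell(Q)$, whereas $x\notin2^kQ$ forces $|x-c(Q)|\gtrsim2^k\ell(Q)$; hence $|x-y|\sim|x-c(Q)|\sim2^k\ell(Q)$ uniformly in $y\in Q$ once $2^k$ exceeds a dimensional threshold, the finitely many smaller annuli being absorbed by the trivial bound (where $|x-y|\gtrsim\ell(Q)$) at the cost of a dimensional constant. Let $T_KK_x$ be the degree-$K$ Taylor polynomial of $y\mapsto K_x(y)$ about $c(Q)$. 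Since $A$ annihilates ${\mathcal P}_K({\mathbb R}^n)$ and ${\rm supp}(A)\subset Q$,
\[
I_\alpha A(x)=\int_Q\bigl(K_x(y)-T_KK_x(y)\bigr)A(y)\,{\rm d}y,
\]
and Taylor's theorem, together with the derivative bound $|\partial_y^\beta K_x(y)|\lesssim_{n,\alpha,|\beta|}|x-y|^{\alpha-n-|\beta|}$, gives for $y\in Q$
\[
\bigl|K_x(y)-T_KK_x(y)\bigr|\lesssim\ell(Q)^{K+1}\sup_{z\in Q}|x-z|^{\alpha-n-K-1}\lesssim\ell(Q)^{K+1}(2^k\ell(Q))^{\alpha-n-K-1}.
\]
Integrating over $Q$ and using $|Q|=\ell(Q)^n$ yields $|I_\alpha A(x)|\lesssim\ell(Q)^\alpha\,2^{-k(n+K+1-\alpha)}$.

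Finally one assembles the two regimes. If $x\in2^{k+1}Q\setminus2^kQ$ then $\chi_{2^jQ}(x)=1$ precisely for $j\ge k+1$, and since $n+K+1-\alpha>0$ the tail $\sum_{j\ge k+1}2^{-j(n+K+1-\alpha)}$ is comparable to $2^{-k(n+K+1-\alpha)}$; thus the annular estimate above equals a constant multiple of $\ell(Q)^\alpha\sum_{k=1}^\infty 2^{-k(n+K+1-\alpha)}\chi_{2^kQ}(x)$, and the contribution from $x\in2Q$ is swallowed by the $k=1$ term. The only genuinely delicate point is the Riesz-kernel derivative estimate combined with the uniform comparability $|x-y|\sim|x-c(Q)|$ on $Q$ for $x$ outside a fixed dilate of $Q$; the remainder is bookkeeping. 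This reproduces the argument of Iida, Sawano and Tanaka \cite{IST14}.
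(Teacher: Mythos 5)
Your proof is correct and follows the same route the paper intends: the paper does not prove this lemma itself but cites \cite[Lemma 4.2]{IST14} and remarks that it is proved by the method of Lemma \ref{lem:210803-11}, namely subtracting the degree-$K$ Taylor polynomial of the kernel at $c(Q)$ against the moment condition and estimating the remainder, exactly as you do. Your handling of the near region $x\in2Q$, the comparability $|x-y|\sim2^k\ell(Q)$ on the annuli, and the geometric tail $\sum_{j\ge k+1}2^{-j(n+K+1-\alpha)}\sim2^{-k(n+K+1-\alpha)}$ is all in order.
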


We remark that Lemma \ref{lem:200507-1} is proved by a method akin to Lemma \ref{lem:210803-11}.

Now we start the proof of Theorem \ref{thm:Olsen Morrey-Lorentz space}.
We may assume that  $f\ge0$ because the integral kernel of $I_\alpha$ is positive.
Now, we prove Theorem \ref{thm:Olsen Morrey-Lorentz space}.
Note that by Fatou's lemma and the Fatou property for Morrey-Lorentz spaces (see Lemma \ref{lem:Fatou Mpqr}),
\begin{equation*}
\|g\cdot I_\alpha f\|_{{\mathcal M}^{r_0}_{r_1,r_2}}
\le
\left\|g\cdot\liminf_{m\to\infty}I_\alpha f_m\right\|_{{\mathcal M}^{r_0}_{r_1,r_2}}
\le
\liminf_{m\to\infty}\|g\cdot I_\alpha f_m\|_{{\mathcal M}^{r_0}_{r_1,r_2}}
\end{equation*}
for all $f\in{\mathcal M}^{p_0}_{p_1,p_2}({\mathbb R}^n)$ and $g\in{\mathcal M}^{q_0}_{q_1,q_2}({\mathbb R}^n)$, where
\begin{equation*}
f_m:=f\chi_{B(m)}\chi_{[0,m)}(|f|) \in L^\infty_{\rm c}({\mathbb R}^n),
\quad m\in{\mathbb N}.
\end{equation*}
Then, we may assume that $f \in L^\infty_{\rm c}({\mathbb R}^n)$ is a positive measurable function in view of the positivity of the integral kernel.
We decompose $f$ according to Theorem \ref{thm:200308-2} with  sufficiently large $K\gg1$;
$f=\sum_{j=1}^\infty \lambda_j a_j$ converges in $L^w({\mathbb R}^n)$ for all $w\in(1,\infty)$, where
$\{Q_j\}_{j=1}^\infty \subset {\mathcal D}({\mathbb R}^n)$,
$\{a_j\}_{j=1}^\infty \subset L^\infty({\mathbb R}^n) \cap {\mathcal P}_K({\mathbb R}^n)^\perp$,
and $\{\lambda_j\}_{j=1}^\infty \subset[0,\infty)$ fulfill (\ref{eq:200308-2.1}).\\

Here, we claim that
\begin{equation}\label{eq:210827-2}
I_\alpha f(x)
=
\sum_{j=1}^\infty\lambda_jI_\alpha a_j(x),
\quad \text{a.e. $x\in{\mathbb R}^n$}.
\end{equation}
In fact, fixing $w\in(1,n/\alpha)$ and then choosing $w^\ast\in(1,\infty)$ satisfying $1/w^\ast=1/w-\alpha/n$, we have
\begin{align*}
\left\|I_\alpha f-\sum_{j=1}^N\lambda_jI_\alpha a_j\right\|_{L^{w^\ast}}
=
\left\|I_\alpha\left[f-\sum_{j=1}^N\lambda_ja_j\right]\right\|_{L^{w^\ast}}
\lesssim
\left\|f-\sum_{j=1}^N\lambda_ja_j\right\|_{L^w}
\to0
\end{align*}
as $N\to\infty$ from the Hardy-Littlewood-Sobolev inequality (see \eqref{eq:HLS-ineq}).
We finish the proof of \eqref{eq:210827-2}.\\

Then, by Lemma \ref{lem:200507-1}, we obtain
\[
|g(x)I_\alpha f(x)|
\lesssim
\sum_{j,k=1}^\infty
\frac{\lambda_j}{2^{k(n+K+1-\alpha)}}\ell(Q_j)^\alpha|g(x)|\chi_{2^kQ_j}(x).
\]
Therefore, we conclude that
\[
\|g \cdot I_\alpha f\|_{{\mathcal M}^{r_0}_{r_1,r_2}}
\lesssim
\|g\|_{{\rm W}{\mathcal M}^{q_0}_{q_1}}
\left\|
\sum_{j,k=1}^\infty
\frac{\lambda_j\ell(2^kQ_j)^{\alpha-\frac{n}{q_0}}}{2^{k(n+L+1)}}
\cdot
\frac{\ell(2^kQ_j)^{\frac{n}{q_0}}}{\|g\|_{{\rm W}{\mathcal M}^{q_0}_{q_1}}}
|g|\chi_{2^kQ_j}
\right\|_{{\mathcal M}^{r_0}_{r_1,r_2}}.
\]
For each $(j,k) \in {\mathbb N} \times {\mathbb N}$, write 
$$
\kappa_{jk}
:=
\frac{\lambda_j\ell(2^kQ_j)^{\alpha-\frac{n}{q_0}}}{2^{k(n+L+1)}},
\quad
b_{jk}
:=
\frac{\ell(2^kQ_j)^{\frac{n}{q_0}}}{\|g\|_{{\rm W}{\mathcal M}^{q_0}_{q_1}}}
|g|\chi_{2^kQ_j}.
$$
Then,
\[
\sum_{j,k=1}^\infty
\frac{\lambda_j\ell(2^kQ_j)^{\alpha-\frac{n}{q_0}}}{2^{k(n+K+1)}}
\cdot
\frac{\ell(2^kQ_j)^{\frac{n}{q_0}}}{\|g\|_{{\rm W}{\mathcal M}^{q_0}_{q_1}}}
|g|\chi_{2^kQ_j}
=
\sum_{j,k=1}^\infty\kappa_{jk}b_{jk},
\]
each $b_{jk}$ is supported on a cube $2^kQ_j$ and
\[
\|b_{jk}\|_{{\rm W}{\mathcal M}^{q_0}_{q_1}}
\le 
|2^kQ_j|^{\frac1{q_0}}.
\]

Observe that $\chi_{2^kQ_j} \le 2^{kn}M\chi_{Q_j}$.
Hence, if we choose $v,\theta\in{\mathbb R}$ such that
$$
K>\alpha-\frac{n}{q_0}-1+\theta n-n, \quad
\theta>\frac1v\ge\frac1{\min(r_1,r_2)}, \quad
0<v\le1,
$$
then we have
\begin{align*}
\left\|
\left(\sum_{j,k=1}^\infty(\kappa_{jk}\chi_{2^kQ_j})^v\right)^{\frac1v}
\right\|_{{\mathcal M}^{r_0}_{r_1,r_2}}
&=
\left\|\left(
\sum_{j,k=1}^\infty
\left(\frac{\lambda_j\ell(2^kQ_j)^{\alpha-\frac{n}{q_0}}}{2^{k(n+K+1)}}\chi_{2^kQ_j}\right)^v
\right)^{\frac1v}\right\|_{{\mathcal M}^{r_0}_{r_1,r_2}}\\
&\lesssim
\left\|\left(
\sum_{j=1}^\infty\left(\lambda_j\ell(Q_j)^{\alpha-\frac{n}{q_0}}(M\chi_{Q_j})^\theta\right)^v
\right)^{\frac1v}\right\|_{{\mathcal M}^{r_0}_{r_1,r_2}}\\
&=
\left\|\left\{
\sum_{j=1}^\infty  
\left(
M
\left[
\left(\lambda_j\ell(Q_j)^{\alpha-\frac n{q_0}}\chi_{Q_j}\right)^{\frac1\theta}
\right]
\right)^{\theta v}
\right\}^{\frac1v}\right\|_{{\mathcal M}^{r_0}_{r_1,r_2}}.
\end{align*}
By virtue of Proposition \ref{prop:200324-1}, the Fefferman-Stein inequality for Morrey-Lorentz spaces, alongside $f_j=(\lambda_j\ell(Q_j)^{\alpha-n/q_0}\chi_{Q_j})^{1/\theta}$, we can remove the maximal operator and obtain
\[
\left\|
\left(\sum_{j,k=1}^\infty(\kappa_{jk}\chi_{2^kQ_j})^v\right)^{\frac1v}
\right\|_{{\mathcal M}^{r_0}_{r_1,r_2}}
\lesssim
\left\|\left(
\sum_{j=1}^\infty\left(\lambda_j \ell(Q_j)^{\alpha-\frac{n}{q_0}}\chi_{Q_j}\right)^v
\right)^{\frac1v}\right\|_{{\mathcal M}^{r_0}_{r_1,r_2}}.
\]
We distinguish two cases here:
\begin{itemize}
\item[(1)] If $\alpha=n/q_0$, then $p_0=r_0$, $p_1=r_1$, and $p_2=r_2$.
Thus, we can use (\ref{eq:200308-2.1}).\\
\item[(2)] If $\alpha>n/q_0$, then by Proposition \ref{prop:Adams Morrey-Lorentz space} and Lemma \ref{lem:120922-1}, we obtain
\begin{align*}
&\left\|\left(
\sum_{j=1}^\infty\left(\lambda_j \ell(Q_j)^{\alpha-\frac{n}{q_0}}\chi_{Q_j}\right)^v
\right)^{\frac1v}\right\|_{{\mathcal M}^{r_0}_{r_1,r_2}}\\
&\lesssim
\left\|\left(
I_{\left(\alpha-\frac{n}{q_0}\right)v}
\left[\sum_{j=1}^\infty\left(\lambda_j \chi_{Q_j}\right)^v\right]
\right)^{\frac1v}\right\|_{{\mathcal M}^{r_0}_{r_1,r_2}}
\lesssim
\left\|
\left(\sum_{j=1}^\infty(\lambda_j\chi_{Q_j})^v\right)^{\frac1v}
\right\|_{{\mathcal M}^{p_0}_{p_1,p_2}}.
\end{align*}
Thus, we can still use (\ref{eq:200308-2.1}).
\end{itemize}
Consequently, we obtain
\begin{align}\label{eq:190820-5}
\left\|
\left(\sum_{j,k=1}^\infty(\kappa_{jk}\chi_{2^kQ_j})^v\right)^{\frac1v}
\right\|_{{\mathcal M}^{r_0}_{r_1,r_2}}
\lesssim
\left\|f \right\|_{{\mathcal M}^{p_0}_{p_1,p_2}}
<\infty.
\end{align}
Observe also that $q_0>r_0$ and $q_1>r_1$.
Thus, by Theorem \ref{thm:200308-1} and (\ref{eq:190820-5}), it follows that
\begin{align*}
\|g \cdot I_\alpha f\|_{{\mathcal M}^{r_0}_{r_1,r_2}}
&\lesssim
\|g\|_{{\rm W}{\mathcal M}^{q_0}_{q_1}}
\left\|
\left(\sum_{j,k=1}^\infty(\kappa_{jk}\chi_{2^kQ_j})^v\right)^{\frac1v}
\right\|_{{\mathcal M}^{r_0}_{r_1,r_2}}
\lesssim
\|g\|_{{\rm W}{\mathcal M}^{q_0}_{q_1}}
\left\|f \right\|_{{\mathcal M}^{p_0}_{p_1,p_2}}. 
\end{align*}

\section*{Acknowledgement}

The authors would like to thank Professor Yoshihiro Sawano for his careful reading of the manuscript. 
The author was financially supported by a Research Fellowship from the Japan Society for the Promotion of Science for Young Scientists (21J12129).

\end{document}